\documentclass[12pt, reqno, a4paper]{amsart}
\usepackage{amsmath, amssymb, amsthm, amscd}
\usepackage[T2A, T1]{fontenc}
\usepackage{txfonts}	
\usepackage{eucal}
\usepackage[dvips]{color}
\usepackage{multicol}
\usepackage[all]{xy}		
\usepackage{graphicx}
\usepackage{color}
\usepackage{colordvi}
\usepackage{xspace}
\usepackage{tikz}
\usepackage{enumitem}
\usepackage{ulem}
\usepackage[colorlinks,final,backref=page,hyperindex]{hyperref}
\allowdisplaybreaks % page break for formulars

\newif\ifflabel\flabelfalse
\ifflabel
\else
	
\fi
\newtheorem{theorem}{Theorem}[section]
\newtheorem{lemma}[theorem]{Lemma}
\newtheorem{corollary}[theorem]{Corollary}
\newtheorem{proposition}[theorem]{Proposition}
\theoremstyle{definition}
\newtheorem{definition}[theorem]{Definition}
\newtheorem{example}[theorem]{Example}
\newtheorem{remark}[theorem]{Remark}

\newcommand{\Char}{\mathrm{char}}
\newcommand{\End}{\mathrm{End}}
\newcommand{\Hom}{\mathrm{Hom}}
\newcommand{\id}{\mathrm{id}}

\begin{document}

\title{Quasi-triangular Jordan D-bialgebras and extended relative Rota-Baxter operators}

%%%%%%%%%%%%%%%%%%%%%%%%%%%%%%%%%%%%%%%%%%%%%%%%%%%%%%%%%%%%%%%%%%%%%%%%%%%%%%%%
% Author Informations

\author[D.~Lu]{Dilei Lu}
\address{College of Applied Science, Beijing Information Science and Technology University, Beijing 100192, China}
\email{ludyray@bistu.edu.cn}

\author[D.~P.~Hou]{Dongping Hou}
\address{School of Mathematics, Yunnan Normal University, Kunming 650500, China}
\email{houdpmath@163.com}

\author[Y.~Lin]{Yuanchang Lin$^{\ast}$}
\address{School of Mathematics, North University of China, Taiyuan 030051, China}
\email{linyuanchang@mail.nankai.edu.cn}

\thanks{All authors contributed equally to this work. The author order deviates from the alphabetical convention solely to comply with the first author's institutional evaluation policy, which requires first-authorship for official recognition.}
\thanks{$^\ast$Corresponding author.}
%%%%%%%%%%%%%%%%%%%%%%%%%%%%%%%%%%%%%%%%%%%%%%%%%%%%%%%%%%%%%%%%%%%%%%%%%%%%%%%%

\subjclass[2020]{
	16T10, % Bialgebras
	17B38, % Yang-Baxter equations and Rota-Baxter operators
	17B60, % Lie (super)algebras associated with other structures (associative, Jordan, etc.)
	17C05, % Identities and free Jordan structures
	17C10, % Structure theory for Jordan algebras
	17C50. % Jordan structures associated with other structures
}

\keywords{
	Jordan algebra,
	Jordan D-bialgebra,
	Jordan Yang-Baxter equation,
	quasi-triangular Jordan D-bialgebra,
	factorizable Jordan D-bialgebra,
	extended relative Rota-Baxter operator}

\date{\today}

\begin{abstract}
	This paper introduces quasi-triangular Jordan D-bialgebras, which are constructed from solutions of the Jordan Yang-Baxter equation (JYBE) with invariant symmetric parts.
	We first develop a systematic operator approach to the JYBE by introducing the notion of extended relative Rota-Baxter operators on Jordan algebras.
	Such operators with their extensions are shown to induce new Jordan algebra structures. 
	Moreover, the symmetrizer-antisymmetrizer decomposition of linear maps reduces the study of extended relative Rota-Baxter operators to both pairs of homomorphisms of Jordan algebras and relative Rota-Baxter operators, respectively.
	This operator framework subsequently yields a characterization of solutions of the JYBE whose symmetric parts are invariant.
	These operator forms are further investigated in the context of quadratic Jordan algebras and semi-direct product Jordan algebras, respectively, leading to explicit constructions of solutions of the JYBE.
	A factorizable Jordan D-bialgebra is then introduced as a special class of quasi-triangular ones, which naturally factorizes the underlying Jordan algebra.
	We prove that the Drinfeld classical double of any Jordan D-bialgebra admits a factorizable Jordan D-bialgebra structure.
	Finally, the operator perspective gives rise to the notion of quadratic Rota-Baxter Jordan algebras: those of zero weight yield triangular Jordan D-bialgebras, while those of nonzero weight are shown to be in one-to-one correspondence with factorizable Jordan D-bialgebras.
\end{abstract}

\maketitle

\tableofcontents

%%%%%%%%%%%%%%%%%%%%%%%%%%%%%%%%%%%%%%%%%%%%%%%%%%%%%%%%%%%%%%%%%%%%%%%%%%%%%%%%
%%%%%%%%%%%%%%%%%%%%%%%%%%%%%%%%%%%%%%%%%%%%%%%%%%%%%%%%%%%%%%%%%%%%%%%%%%%%%%%%
%%%%%%%%%%%%%%%%%%%%%%%%%%%%%%%%%%%%%%%%%%%%%%%%%%%%%%%%%%%%%%%%%%%%%%%%%%%%%%%%
\section{Introduction}
Jordan algebras are an important class of commutative non-associative algebras, introduced by the physicist P.~Jordan in connection with quantum mechanics~\cite{jordan1932klasse, jordan1934algebraic}.
They can be viewed as the ``dual'' counterpart to Lie algebras, in the sense that the commutator and anti-commutator of an associative algebra give rise to a Lie algebra and a Jordan algebra, respectively.
It should be noted, however, that not every Jordan algebra arises in this manner: those that do are called \emph{special} Jordan algebras, while those that do not are termed \emph{exceptional}~\cite{albert1947structure, jacobson1968structure}.
Subsequently, Jordan algebras have emerged as a fundamental algebraic structure that appears in various areas of modern mathematics, including differential geometry~\cite{chu2006grassmann, chu2008jordan,  iorduanescu2000jordan, kocher1970jordan}, Lie and representation theory~\cite{bertram2002geometry, koecher1967imbedding, kostant1993jordan, meyberg1970jordan}, complex and harmonic analysis~\cite{kaup2000symmetric, resnikoff1975theta, upmeier1986jordan, upmeier2011symmetric}, and operad theory~\cite{ bagherzadeh2017trialgebras, bai2013splitting, kashuba2021free}.   

The theory of Jordan D-bialgebras was initiated by Zhelyabin~\cite{zhelyabin1997jordan,zhelyabin2000class}.
A new approach was recently developed in~\cite{hou2022new}, where the equivalence between Jordan D-bialgebras and double constructions of pseudo-Euclidean Jordan algebras was established. 
Meanwhile, in the theory of Lie bialgebras, quasi-triangular structures~\cite{drinfeld1986quantum} have played a central role in mathematical physics. 
Factorizable Lie bialgebras, as a special subclass, are of particular importance: they connect classical $r$-matrices with factorization problems and are essential to the study of integrable systems~\cite{bai2010nonabelian, reshetikhin1988quantum,semenov2003integrable}.
Recently, the quasi-triangular and factorizable theories have been extended to antisymmetric infinitesimal bialgebras~\cite{sheng2023quasi}, Poisson bialgebras~\cite{lin2026quasitriangular}, pre-Lie bialgebras~\cite{wang2024quasi} and Leibniz bialgebras~\cite{bai2024quasi}.
Note that the notions of triangular and quasi-triangular Jordan D-bialgebras were first introduced by Zhelyabin~\cite{zhelyabin1999finite}, building on the corresponding Lie bialgebra concepts via the Kantor-Koecher-Tits (KKT) construction. 
However, it remains an open problem whether the KKT construction can be applied to arbitrary Jordan D-bialgebras~\cite{zhelyabin1997jordan, zhelyabin1998jordan}. 
It is therefore desirable to develop a more direct approach to the quasi-triangular and factorizable theories for Jordan D-bialgebras, bypassing the KKT construction.

The purpose of this paper is to develop such theories for Jordan D-bialgebras, in parallel with the Lie bialgebra case, without relying on the KKT construction.
The key technical challenge is that, unlike the cases of Lie bialgebras, where the Yang-Baxter-type equations involve 3-tensors and the compatibility identities involve 3 elements, the Jordan D-bialgebra compatibility identities involve 4 elements.
This additional complexity presents a significant difficulty.
To overcome it, we first simplify the condition for a Jordan D-bialgebra to be coboundary, showing that the symmetric part of the 2-tensor $r \in A \otimes A$ must be invariant and that $r$ should satisfy a 3-tensor Yang-Baxter-type equations, namely the Jordan Yang-Baxter equation (JYBE).
We show that a solution of the JYBE with invariant symmetric part gives rise to a \emph{quasi-triangular} Jordan D-bialgebra, while a skew-symmetric solution yields a \emph{triangular} one.

To characterize such solutions, we adopt an operator approach that is both conceptually unified and computationally powerful.
We begin by introducing extended relative Rota-Baxter operators on Jordan algebras, which generalize the ordinary ones (also known as $\mathcal{O}$-operators) by incorporating an extension term.
The power of this approach is demonstrated via the symmetrizer-antisymmetrizer decomposition, which reduces the study of extended relative Rota-Baxter operators to both pairs of homomorphisms of Jordan algebras and relative Rota-Baxter operators respectively, and uncovers the intrinsic structure of the framework.
Moreover, via the symmetrizer-antisymmetrizer decomposition, the skew-symmetric and symmetric parts of solutions of the JYBE are shown to correspond to an extended relative Rota-Baxter operator together with an extension, respectively.

We then obtain a characterization of such solutions in terms of extended relative Rota-Baxter operators. 
The operator framework culminates in a description of the solutions of the JYBE with invariant symmetric part in terms of both pairs of homomorphisms of Jordan algebras and relative Rota-Baxter operators, respectively.
In the skew-symmetric case, the extended relative Rota-Baxter operator reduces to the ordinary relative Rota-Baxter operator of weight $0$, recovering the classical correspondence between $\mathcal{O}$-operators and the Yang-Baxter equation. Then we further demonstrate the versatility of the operator approach by investigating the JYBE on quadratic Jordan algebras and semi-direct product Jordan algebras.
These results provide systematic methods for constructing solutions of the JYBE from extended relative Rota-Baxter operators, and vice versa.

We also introduce \emph{factorizable} Jordan D-bialgebras as a special class of quasi-triangular ones.
We demonstrate that every factorizable Jordan D-bialgebra induces a factorization of the underlying Jordan algebra, and the Drinfeld classical double of an arbitrary Jordan D-bialgebra admits a natural factorizable Jordan D-bialgebra structure. Finally, building upon the operator perspective, we introduce the notion of a quadratic Rota-Baxter Jordan algebra, which consists of a quadratic Jordan algebra together with a Rota-Baxter operator of weight $\lambda$ satisfying a compatibility condition.
We show that a quadratic Rota-Baxter Jordan algebra of zero weight yields a triangular Jordan D-bialgebra, while a one-to-one correspondence is established between factorizable Jordan D-bialgebras and quadratic Rota-Baxter Jordan algebras of nonzero weight.

The paper is organized as follows.
In Section~\ref{sec:ajaerrb}, we recall the definitions and basic properties of Jordan algebras and their representations, and introduce $A$-module Jordan algebras and extended relative Rota-Baxter operators.
We then establish the symmetrizer-antisymmetrizer decomposition of linear maps, showing that extended relative Rota-Baxter operators are equivalent to both pairs of relative Rota-Baxter operators and homomorphisms of Jordan algebras, respectively.
Section~\ref{sec:JYBE} focuses on quasi-triangular Jordan D-bialgebras. We recall the definition of Jordan D-bialgebras, simplify the coboundary condition, derive the operator forms of the JYBE via extended relative Rota-Baxter operators, and investigate these structures on quadratic Jordan algebras and semi-direct product Jordan algebras.
In Section~\ref{sec:quadrbj}, we introduce factorizable Jordan D-bialgebras and quadratic Rota-Baxter Jordan algebras, prove that the Drinfeld double of any Jordan D-bialgebra admits a factorizable Jordan D-bialgebra structure, and establish a bijective correspondence between factorizable Jordan D-bialgebras and quadratic Rota-Baxter Jordan algebras of nonzero weight.

Throughout this paper, we work over a base field $\mathbf{k}$ of characteristic $0$, and all vector spaces and algebras are assumed to be finite-dimensional.
We adopt the following conventions and notations.
\begin{enumerate}
	\item[(i)]
	      Let $(A, \diamond)$ be a vector space equipped with a binary operation $\diamond: A \otimes A \to A$.
	      Let $L_\diamond(x)$ and $R_\diamond(x)$ denote the left and right multiplication operators, that is
	      \begin{equation*}
		      L_\diamond(x)y = R_\diamond(y)x = x \diamond y, \;\; \forall x, y \in A.
	      \end{equation*}

	\item[(ii)]
	      Let $(A, \diamond)$ be a vector space equipped with a binary operation $\diamond: A \otimes A \to A$.
	      Let $r = \sum_{i} x_i \otimes y_i \in A \otimes A$.
	      Set
	      \begin{align*}
		      r_{12} = \sum\nolimits_{i} x_i \otimes y_i \otimes 1, \;\;
		      r_{13} = \sum\nolimits_{i} x_i \otimes 1 \otimes y_i, \;\;
		      r_{23} = \sum\nolimits_{i} 1 \otimes x_i \otimes y_i, \\
		      r_{21} = \sum\nolimits_{i} y_i \otimes x_i \otimes 1, \;\;
		      r_{31} = \sum\nolimits_{i} y_i \otimes 1 \otimes x_i, \;\;
		      r_{32} = \sum\nolimits_{i} 1 \otimes y_i \otimes x_i,
	      \end{align*}
	      where $1$ is the unit if $(A, \diamond)$ is unital or a symbol playing a similar role as the unit for the non-unital cases.
	      Furthermore, define compound symbols such as $r_{12} \diamond r_{13}$ by
	      \begin{equation*}
		      r_{12} \diamond r_{13} = \sum\nolimits_{i, j} x_i \diamond x_j \otimes y_i \otimes y_j.
	      \end{equation*}

	\item[(iii)]
	      Let $V, W$ be two vector spaces and $T: V \to W$ be a linear map.
	      Denote the dual map by $T^*: W^* \to V^*$, which is defined by
	      \begin{equation*}
		      \langle T^*(\xi^*), v \rangle = \langle \xi^*, T(v) \rangle, \;\; \forall v \in V, \; \xi^* \in W^*,
	      \end{equation*}
		  where $\langle \ ,\ \rangle$ is the standard pairing between the dual space $V^*$ and $V$.

	\item[(iv)]
	      Let $A, V$ be vector spaces.
	      For a linear map $\zeta: A \to \End_{\mathbf{k}}(V)$, define a linear map $\zeta^*: A \to \End_{\mathbf{k}}(V^*)$ by $\zeta^*(a) = (\zeta(a))^*$, or more explicitly,
	      \begin{equation*}
		      \langle \zeta^*(a)v^*, u\rangle = \langle v^*, \zeta(a) u\rangle, \;\; \forall a \in A, \; u \in V, \; v^* \in V^*.
	      \end{equation*}

\end{enumerate}

%%%%%%%%%%%%%%%%%%%%%%%%%%%%%%%%%%%%%%%%%%%%%%%%%%%%%%%%%%%%%%%%%%%%%%%%%%%%%%%%
%%%%%%%%%%%%%%%%%%%%%%%%%%%%%%%%%%%%%%%%%%%%%%%%%%%%%%%%%%%%%%%%%%%%%%%%%%%%%%%%
%%%%%%%%%%%%%%%%%%%%%%%%%%%%%%%%%%%%%%%%%%%%%%%%%%%%%%%%%%%%%%%%%%%%%%%%%%%%%%%%
\section{\texorpdfstring{$A$}{A}-module Jordan algebras and extended relative Rota-Baxter operators}\label{sec:ajaerrb}

In this section, we first recall the fundamental properties and notions of Jordan algebras.
We then introduce $A$-module Jordan algebras, relative Rota-Baxter operators and extended relative Rota-Baxter operators.
We show that extended relative Rota-Baxter operators induce new Jordan algebra structures and provide equivalent characterizations of extended relative Rota-Baxter operators via the symmetrizer-antisymmetrizer decomposition, which lays the necessary foundations for the remainder of this paper.

%%%%%%%%%%%%%%%%%%%%%%%%%%%%%%%%%%%%%%%%%%%%%%%%%%%%%%%%%%%%%%%%%%%%%%%%%%%%%%%%
\subsection{\texorpdfstring{$A$}{A}-module Jordan algebras and relative Rota-Baxter operators}

\begin{definition}
	A {\bf Jordan algebra} is a pair $(A, \circ)$, where $A$ is a vector space and $\circ : A \otimes A \to A$ is a commutative binary operation satisfying the following \textbf{Jordan identity}:
	\begin{equation}
		((x \circ x) \circ y) \circ x = (x \circ x) \circ (y \circ x), \;\; \forall x, y \in A. \label{eq:Jidentity}
	\end{equation}
	A Jordan algebra $(A, \circ)$ is called \textbf{trivial} if $x \circ y = 0$ for all $x, y \in A$.
	A linear map $f: A \to B$ between Jordan algebras $(A, \circ_A)$ and $(B, \circ_B)$ is called a \textbf{homomorphism} of Jordan algebras if $f(x \circ_A y) = f(x) \circ_B f(y)$ for all $x, y \in A$.
\end{definition}

\begin{remark}
	When $\Char(\mathbb{\mathbf{k}}) \neq 2, 3$, it was shown in~\cite{albert1947structure} that the Jordan identity~\eqref{eq:Jidentity} is equivalent to the following identity for all $x, y, z, w \in A$
	\begin{align}
		& ((x \circ y) \circ w) \circ z + ((y \circ z) \circ w) \circ x+((z \circ x) \circ w) \circ y \notag            \\
		=\; & (x \circ y) \circ(w \circ z) + (y \circ z) \circ (w \circ x) + (z \circ x) \circ (w \circ y). \label{eq:jord}
	\end{align}
\end{remark}

\begin{lemma}
	Let $(A, \circ)$ be a Jordan algebra.
	Then the following equality holds for all $x, y, z, w \in A$
	\begin{align}
		& x \circ ( y\circ (z \circ w))-y \circ ( x\circ (z \circ w)) \notag \\
		=\; & z \circ ( x\circ (w \circ y)) +w \circ ( x\circ (y \circ z))- z \circ ( y\circ (w \circ x)) -w \circ ( y \circ (x \circ z)), \label{ID:jord2}
	\end{align}
\end{lemma}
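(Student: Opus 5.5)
The plan is to recognise \eqref{ID:jord2} as the statement that the inner operator $D_{x,y} := [L_\circ(x), L_\circ(y)] = L_\circ(x)L_\circ(y) - L_\circ(y)L_\circ(x)$ is a derivation of $(A,\circ)$. Indeed, the left-hand side of \eqref{ID:jord2} is exactly $D_{x,y}(z \circ w)$. By commutativity of $\circ$, the right-hand side can be regrouped as
\begin{equation*}
\bigl(x\circ(y\circ z) - y\circ(x\circ z)\bigr)\circ w + z\circ\bigl(x\circ(y\circ w) - y\circ(x\circ w)\bigr) = D_{x,y}(z)\circ w + z\circ D_{x,y}(w).
\end{equation*}
So it suffices to prove $D_{x,y}(z\circ w) = D_{x,y}(z)\circ w + z\circ D_{x,y}(w)$. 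This is equivalent to the operator identity
\begin{equation*}
[D_{x,y}, L_\circ(z)] = L_\circ(D_{x,y} z),
\end{equation*}
applied to $w$. It is the classical fact that inner derivations of Jordan algebras are derivations, and I would prove it directly from the linearized identity \eqref{eq:jord}, which holds in characteristic $0$.

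First, I extract two operator identities from \eqref{eq:jord} by reading it as a linear map in different variables. Reading \eqref{eq:jord} as a function of $w$ (after rewriting every term via commutativity as an operator applied to $w$) gives
\begin{equation*}
[L_\circ(x), L_\circ(y\circ z)] + [L_\circ(y), L_\circ(z\circ x)] + [L_\circ(z), L_\circ(x\circ y)] = 0. \qquad (\mathrm{A})
\end{equation*}
Reading \eqref{eq:jord} as a function of $z$ (i.e.\ relabelling so the free argument sits in one of the cyclic slots) gives the second standard identity
\begin{equation*}
L_\circ((x\circ y)\circ z) = L_\circ(x\circ y)L_\circ(z) + L_\circ(y\circ z)L_\circ(x) + L_\circ(z\circ x)L_\circ(y) - L_\circ(x)L_\circ(z)L_\circ(y) - L_\circ(y)L_\circ(z)L_\circ(x). \qquad (\mathrm{B})
\end{equation*}
Here the two terms $L_\circ(x)L_\circ(z)L_\circ(y)$ and $L_\circ(y)L_\circ(z)L_\circ(x)$ come from the cyclic summands in which the free variable sits inside.

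Next, I would expand both sides of the target identity $[D_{x,y},L_\circ(z)] = L_\circ(x\circ(y\circ z)) - L_\circ(y\circ(x\circ z))$ using (B). Applying (B) with the triples $(y,z,x)$ and $(x,z,y)$ expresses $L_\circ(x\circ(y\circ z))$ and $L_\circ(y\circ(x\circ z))$ in terms of products of left multiplications. On subtracting, the terms $L_\circ(z\circ x)L_\circ(y)$ and $L_\circ(y\circ z)L_\circ(x)$ combine, and (A) is used to rewrite the commutators with $L_\circ(x\circ z)$ and $L_\circ(y\circ z)$. The expected output is precisely
\begin{equation*}
L_\circ(x)L_\circ(y)L_\circ(z) - L_\circ(y)L_\circ(x)L_\circ(z) - L_\circ(z)L_\circ(x)L_\circ(y) + L_\circ(z)L_\circ(y)L_\circ(x) = [D_{x,y}, L_\circ(z)].
\end{equation*}
Applying this operator identity to $w$ and using commutativity then yields \eqref{ID:jord2}.

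The main obstacle is bookkeeping. One must correctly translate each term of \eqref{eq:jord} into operator form for the chosen free variable; the signs and placements in (B) are easy to get wrong. One must also find the right combination of instances of (A) and (B) that cancels all degree-two-in-$L$ terms of the form $L_\circ(\cdot\circ\cdot)L_\circ(\cdot)$. If the direct expansion becomes unwieldy, my fallback is to work with elements rather than operators. I would then write down the instances of \eqref{eq:jord} obtained by substituting the variable quadruples $(x,z,w;y)$, $(y,z,w;x)$ and the like, and take a suitable signed sum, checking term by term that the $x\leftrightarrow y$ antisymmetrisation produces \eqref{ID:jord2}.
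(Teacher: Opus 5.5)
Your proposal is correct and is essentially the paper's argument. The paper applies \eqref{eq:jord} once to each of $x\circ(y\circ(z\circ w))$ and $y\circ(x\circ(z\circ w))$ and subtracts; these are exactly the two instances you get by applying (B) with the triples $(y,z,x)$ and $(x,z,y)$ to $w$, so your fallback is literally the paper's proof and the derivation reading of $D_{x,y}$ is a repackaging of the same computation. One small correction: after subtracting the two instances of (B), the terms of the form $L_\circ(a\circ b)L_\circ(c)$ cancel outright by commutativity, so identity (A) is not needed at all.
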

\begin{proof}
	For all $x, y, z, w \in A$, we have
	\begin{align*}
		& x \circ ( y\circ (z \circ w)) - y \circ ( x\circ (z \circ w)) \\
		\overset{\eqref{eq:jord}}{=}\; & (x \circ y) \circ (z \circ w) + (z \circ y) \circ (w \circ x) + (w \circ y) \circ (x \circ z) - z \circ ( y\circ (w \circ x)) - w \circ ( y\circ (x \circ z)) \\
		& - (y \circ x) \circ (z \circ w) - (z \circ x) \circ (w \circ y) - (w \circ x) \circ (y \circ z) + z \circ ( x\circ (w \circ y)) + w \circ ( x\circ (y \circ z)) \\
		\overset{\hphantom{\eqref{eq:jord}}}{=}\; & z \circ ( x\circ (w \circ y)) +w \circ ( x\circ (y \circ z)) - z \circ ( y\circ (w \circ x)) -w \circ ( y\circ (x \circ z)).
	\end{align*}
	The proof is complete.
\end{proof}

\begin{example}\label{ex:JordanXY}
	The following examples of Jordan algebras are taken from~\cite{kashuba2016variety}.
	\begin{enumerate}
		\item[(a)]
		    $(A, \circ)$ is a 2-dimensional Jordan algebra with a basis $\{e_{1}, e_{2}\}$ whose product of $\circ$ is given by
		    \begin{equation*}
			    e_{1} \circ e_{1} = e_{1}, \quad
			    e_{1} \circ e_{2} = \frac{1}{2}e_{2} = e_{2} \circ e_{1}, \quad
			    e_{2} \circ e_{2} = 0. 
		    \end{equation*}

		\item[(b)]
		    $(A, \circ)$ is a 3-dimensional Jordan algebra with a basis $\{e_{1}, e_{2}, e_{3}\}$ whose non-zero product of $\circ$ is given by
		    \begin{equation*}
			    e_{2} \circ e_{2} = e_{1}, \quad
			    e_{3} \circ e_{3} = -e_{1}, \quad
			    e_{1} \circ e_{i} = e_{i} = e_{i} \circ e_{1}, \quad i=1,2,3. 
		    \end{equation*}
	\end{enumerate}
\end{example}

\begin{definition}
	A \textbf{representation} of a Jordan algebra $(A, \circ)$ is a pair $(V; \rho)$,
	where $V$ is a vector space and $\rho: A \to \mathfrak{gl}(V)$ is a linear map such that for all $x, y, z \in A$,
	\begin{align}
		 & \rho(x) \rho(y) \rho(z) + \rho(z) \rho(y) \rho(x) + \rho((x \circ z) \circ y) = \rho(x) \rho(y \circ z) + \rho(y) \rho(z \circ x) + \rho(z) \rho(x \circ y), \label{repJor1} \\
		 & [\rho(x), \rho(y \circ z)] + [\rho(y), \rho(z \circ x)] + [\rho(z), \rho(x \circ y)] = 0. \label{repJor2}
	\end{align}
	A linear map $\varphi: V_{1} \to V_{2}$ between representations $(V_{1}; \rho_{1})$ and $(V_{2}; \rho_{2})$ is called a \textbf{homomorphism} of representations if $\varphi \rho_{1}(x) = \rho_{2}(x) \varphi$ for all $x \in A$;
	if in addition, $\varphi$ is a linear isomorphism, then it is called an \textbf{isomorphism} of representations.
\end{definition}

Note that if $(V; \rho)$ is a representation of $(A, \circ)$, then so is $(V^*; \rho^*)$ \cite{jacobson1951general}.

\begin{example}
	Let $(A, \circ)$ be a Jordan algebra.
	Then both $(A; L_\circ)$ and $(A^*; L_{\circ}^*)$ are representations of $(A, \circ)$, called the {\bf regular representation} and the \textbf{coregular representation}, respectively.
\end{example}

\begin{definition}
	Let $(A, \circ)$ and $(V, \circ_V)$ be Jordan algebras, and $\rho: A \to \End_{\mathbf{k}}(V)$ be a linear map.
	The pair $((V, \circ_V); \rho)$ is called an \textbf{$A$-module Jordan algebra} if $(V; \rho)$ is a representation of $(A, \circ)$ and the following equalities hold for all $x, y \in A$ and $u, v, w \in V$:
	\begin{align}
		& u \circ_V \rho(x)(v \circ_V w) + v \circ_V \rho(x)(w \circ_V u) + w \circ_V \rho(x)(u \circ_V v) \nonumber \\
		=\; & (\rho(x)u) \circ_V (v \circ_V w) + (\rho(x)v) \circ_V (w \circ_V u) + (\rho(x)w) \circ_V (u \circ_V v), \label{Ajord1} \\
		& \rho(x)\big((u \circ_V v) \circ_V w\big) + v \circ_V \big(w \circ_V \rho(x)u\big) + u \circ_V \big(w \circ_V (\rho(x)v)\big) \nonumber \\
		=\; & (u \circ_V v) \circ_V (\rho(x)w) + (v \circ_V w) \circ_V (\rho(x)u) + (w \circ_V u) \circ_V (\rho(x)v),\label{Ajord2} \\
		& \rho(y)\rho(x)(u \circ_V v) + (\rho(x)\rho(y)v) \circ_V u + (\rho(x)\rho(y)u) \circ_V v \nonumber \\
		=\; & \rho(x \circ y)(u \circ_V v) + (\rho(x)u) \circ_V (\rho(y)v) + (\rho(y)u) \circ_V (\rho(x)v), \label{Ajord3} \\
		& \big(\rho(x \circ y)u\big) \circ_V v + \rho(x)\big(u \circ_V (\rho(y)v)\big) + \rho(y)\big(u \circ_V (\rho(x)v)\big) \nonumber \\
		=\; & \rho(x \circ y)(u \circ_V v) + (\rho(x)u) \circ_V (\rho(y)v) + (\rho(y)u) \circ_V (\rho(x)v). \label{Ajord4}
	\end{align}
\end{definition}

\begin{proposition}
	Let $(A, \circ)$ be a Jordan algebra, $V$ be a vector space with a binary operation $\circ_V$, and $\rho: A \to \End_{\mathbf{k}}(V)$ be a linear map.
	Define a binary operation $\bullet$ on $A \oplus V$ by
	\begin{equation*}
		(x + u) \bullet (y + v) := x \circ y + \rho(x) v + \rho(y) u + u \circ_V v, \;\; \forall x, y \in A, \; u, v \in V.
	\end{equation*}
	Then $((V, \circ_V); \rho)$ is an $A$-module Jordan algebra if and only if $(A \oplus V, \bullet)$ is a Jordan algebra, which is called the \textbf{semi-direct product of $(A, \circ)$ and $((V, \circ_V); \rho)$}, and denoted by $(A \ltimes_{\rho} V, \bullet)$.
\end{proposition}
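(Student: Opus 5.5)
The approach is to use the linearized Jordan identity \eqref{eq:jord} as the working criterion. Since $\Char(\mathbf{k})=0$, the Remark says that the commutative algebra $(A\oplus V,\bullet)$ is Jordan if and only if \eqref{eq:jord} holds for all quadruples of elements of $A\oplus V$. The product $\bullet$ is clearly commutative, because $\circ$ and $\circ_V$ are commutative (the latter in the forward direction by hypothesis; in the backward direction, $\circ_V$ is the restriction of $\bullet$ to $V$). Identity \eqref{eq:jord} is multilinear, so it suffices to test it on homogeneous arguments $x,y,z,w$, each lying in either $A$ or $V$. The statement then splits into cases according to how many of the four arguments lie in $V$.

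The cases are handled as follows.

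\begin{itemize}
\item \textbf{Zero arguments in $V$.} The identity is just the Jordan identity \eqref{eq:jord} in $A$, which holds.
\item \textbf{One argument in $V$.} Write $\bullet$ out. Up to the $S_3$-symmetry in $x,y,z$, the element of $V$ sits either in the special slot $w$ or in one of the symmetric slots. These two placements give exactly the linearized versions of \eqref{repJor1} and \eqref{repJor2}. This is the standard fact that a split null extension $A\oplus V$ is Jordan if and only if $(V;\rho)$ is a representation, so I would cite or briefly verify this correspondence.
\item \textbf{Two arguments in $V$.} Up to symmetry, either both elements of $V$ are among $x,y,z$, or one is $w$. Expanding, with terms like $(\rho(x)u)\circ_V v$ and $\rho(x)(u\circ_V v)$ appearing, should yield \eqref{Ajord3} and \eqref{Ajord4}, after possibly taking linear combinations with already established identities.
\item \textbf{Three arguments in $V$.} The single element of $A$ is either $w$ or one of $x,y,z$. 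These should give \eqref{Ajord1} and \eqref{Ajord2}.
\item \textbf{Four arguments in $V$.} The identity is the Jordan identity for $(V,\circ_V)$, which holds because $(V,\circ_V)$ is Jordan by hypothesis in the forward direction and is a subalgebra in the converse direction.
\end{itemize}

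In the converse direction, assume $(A\oplus V,\bullet)$ is Jordan. Then $A$ and $V$ are subalgebras, so $(V,\circ_V)$ is Jordan. Specializing \eqref{eq:jord} to the mixed cases above recovers \eqref{repJor1}, \eqref{repJor2} and \eqref{Ajord1}--\eqref{Ajord4}, so $((V,\circ_V);\rho)$ is an $A$-module Jordan algebra.

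The main obstacle is bookkeeping in the two- and three-element cases. The two placements within each case may not give the stated identities verbatim; for example, \eqref{Ajord3} and \eqref{Ajord4} share the same right-hand side, which suggests they arise from the two placements in the two-argument case. I would expand each placement carefully into $V$-components and then match it, possibly after a substitution or symmetrization, to the corresponding defining identity. This matching must be exact in both directions for the equivalence to hold.
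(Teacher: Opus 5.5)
Your proposal is correct and is essentially the argument the paper has in mind when it defers to its reference: a direct verification of the linearized Jordan identity \eqref{eq:jord} on $A\oplus V$, split according to which arguments lie in $V$. The bookkeeping you flag does close up exactly: with one argument in $V$, putting it in the special slot $w$ gives \eqref{repJor2} verbatim and putting it in a symmetric slot gives \eqref{repJor1} modulo \eqref{repJor2}, while with two or three arguments in $V$ the placements with $w\in V$ give \eqref{Ajord4} and \eqref{Ajord2} and those with $w\in A$ give \eqref{Ajord3} and \eqref{Ajord1}, verbatim up to relabeling and commutativity of $\circ_V$.
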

\begin{proof}
	The proof follows by a similar argument as in~\cite[Theorem 2.10]{hou2022new}.
\end{proof}

\begin{example}
	Let $(A, \circ)$ be a Jordan algebra. 
	Then
	\begin{enumerate}
		\item[(i)]
		    $((A, \circ); L_{\circ})$ is an $A$-module Jordan algebra.

		\item[(ii)]
		    A representation of $(A, \circ)$ is equivalent to an $A$-module Jordan algebra equipped with the trivial Jordan algebra structure.
		    In the sequel, we will adopt this perspective for convenience and always regard a representation of $(A, \circ)$ as such an $A$-module Jordan algebra.
	\end{enumerate}
\end{example}

We now introduce the notion of (relative) Rota-Baxter operators of weights associated to $A$-module Jordan algebras.
These operators are motivated by their counterparts in the context of Lie and associative algebras, where they are also known as $\mathcal{O}$-operators~\cite{bai2010double, bai2010nonabelian, bai2012O, kupershmidt1999classical}.

\begin{definition}\label{def:oo}
	Let $(A, \circ)$ be a Jordan algebra, $((V, \circ_V); \rho)$ be an $A$-module Jordan algebra and $\lambda \in \mathbf{k}$.
	A linear map $T: V \to A$ is called a \textbf{relative Rota-Baxter operator} of weight $\lambda$ on $(A, \circ)$ associated to $((V, \circ_V); \rho)$ if
	\begin{equation*}
		T(u) \circ T(v) = T\big( \rho(T(u))v + \rho(T(v))u + \lambda u \circ_V v \big), \;\; \forall u, v \in V.
	\end{equation*}
	A relative Rota-Baxter operator of weight $\lambda$ on $(A, \circ)$ associated to $((A, \circ); L_{\circ})$ is called a {\bf Rota-Baxter operator} of weight $\lambda$ on $(A, \circ)$, i.e.,
	\begin{equation*}
		T(x) \circ T(y) = T\big( T(x) \circ y + x \circ T(y) + \lambda x \circ y \big), \;\; \forall x, y \in A.
	\end{equation*}
\end{definition}

\begin{remark}
	$T$ is a relative Rota-Baxter operator of weight $\lambda$ on $(A, \circ)$ associated to $((V, \circ_V); \rho)$ if and only if $T$ is a relative Rota-Baxter operator of weight $1$ on $(A, \circ)$ associated to $((V, \lambda \circ_V); \rho)$.
	If, in addition $\lambda \neq 0$, this is equivalent to $\frac{T}{\lambda}$ being a relative Rota-Baxter operator of weight $1$ on $(A, \circ)$ associated to $((V, \circ_V); \rho)$.
\end{remark}

We conclude this subsection by establishing the relationship between relative Rota-Baxter operators and Rota-Baxter operators.

\begin{proposition}
	Let $(A, \circ)$ be a Jordan algebra, $((V, \circ_V); \rho)$ be an $A$-module Jordan algebra, and $(A \ltimes_{\rho}V, \bullet)$ be the semi-direct product of $(A, \circ)$ and $((V, \circ_V); \rho)$.
	Let $T: V \to A$ be a linear map, and $\lambda \in \mathbf{k}$.
	Then the following conditions are equivalent.
	\begin{enumerate}[label=(\roman*)]
		\item \label{xyzw:semi:1}
		      $T$ is a relative Rota-Baxter operator of weight $\lambda$ on $(A, \circ)$ associated to $((V, \circ_V); \rho)$.

		\item \label{xyzw:semi:2}
		      The linear map $\widehat{T}$ defined by
		      \begin{align}
			      \widehat{T}: A \oplus V \to A \oplus V, \;\; x + u \mapsto T(u) - \lambda x, \;\; \forall x \in A,\; u \in V,
		      \end{align}
		      is a Rota-Baxter operator of weight $\lambda$ on the Jordan algebra $(A \ltimes_{\rho}V, \bullet)$.

		\item \label{xyzw:semi:3}
		      The linear map $-\lambda \id - \widehat{T}$ defined by
		      \begin{align}
			      -\lambda \id - \widehat{T}: A \oplus V \to A \oplus V , \;\; x + u \mapsto -  T(u)-\lambda u, \quad \forall x \in A,\; u \in V,
		      \end{align}
		      is a Rota-Baxter operator of weight $\lambda$ on the Jordan algebra $(A \ltimes_{\rho}V, \bullet)$.
	\end{enumerate}
\end{proposition}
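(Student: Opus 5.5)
Both equivalences come from expanding the Rota-Baxter identity on $(A \ltimes_{\rho} V, \bullet)$ and comparing its $A$- and $V$-components. We never need the Jordan identity: the argument only compares components in the vector-space decomposition $A \oplus V$. For (i)$\Leftrightarrow$(ii), fix $x, y \in A$ and $u, v \in V$, and put $X = x + u$, $Y = y + v$. Since $\widehat{T}(X) = T(u) - \lambda x \in A$ and $\widehat{T}(Y) = T(v) - \lambda y \in A$, the left-hand side is
\begin{equation*}
\widehat{T}(X) \bullet \widehat{T}(Y) = T(u) \circ T(v) - \lambda\big(T(u) \circ y + x \circ T(v)\big) + \lambda^2 x \circ y,
\end{equation*}
which lies entirely in $A$. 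For the right-hand side, we compute the argument
\begin{equation*}
Z := \widehat{T}(X) \bullet Y + X \bullet \widehat{T}(Y) + \lambda X \bullet Y
\end{equation*}
by components. Its $A$-component is $T(u) \circ y + x \circ T(v) - 2\lambda x \circ y + \lambda x \circ y$. Its $V$-component is $\rho(T(u))v + \rho(T(v))u - \lambda\rho(x)v - \lambda\rho(y)u + \lambda\rho(x)v + \lambda\rho(y)u + \lambda u \circ_V v$, which collapses to $\rho(T(u))v + \rho(T(v))u + \lambda u \circ_V v$.

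Next we apply $\widehat{T}$ to $Z$: it sends the $V$-component through $T$ and multiplies the $A$-component by $-\lambda$. This gives
\begin{equation*}
T\big(\rho(T(u))v + \rho(T(v))u + \lambda u \circ_V v\big) - \lambda\big(T(u) \circ y + x \circ T(v)\big) + \lambda^2 x \circ y.
\end{equation*}
The $\lambda$- and $\lambda^2$-terms match those of the left-hand side, so the Rota-Baxter identity for $\widehat{T}$ is equivalent to the relative Rota-Baxter identity of Definition~\ref{def:oo} for all $u, v$. For the reverse implication, set $x = y = 0$.

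For (ii)$\Leftrightarrow$(iii), we use a general fact: for any algebra with product $\bullet$, if $P$ is a Rota-Baxter operator of weight $\lambda$, then so is $-\lambda\id - P$. We verify this directly by expanding $(\lambda X + P X) \bullet (\lambda Y + P Y)$ and $-(\lambda\id + P)\big(-(\lambda X + P X) \bullet Y - X \bullet (\lambda Y + P Y) + \lambda X \bullet Y\big)$, then cancelling with the identity for $P$; it is a routine bilinear computation. Applying the fact with $P = \widehat{T}$ gives (ii)$\Rightarrow$(iii), and applying it with $P = -\lambda\id - \widehat{T}$ gives the converse. We also confirm that $-\lambda\id - \widehat{T}$ has the stated formula: $-\lambda(x + u) - T(u) + \lambda x = -T(u) - \lambda u$.

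The only real obstacle is keeping track of the components and signs in the first computation. The key point is that the $\rho(x)v$ and $\rho(y)u$ terms in the $V$-component cancel exactly, so the identity reduces cleanly to the relative Rota-Baxter condition.
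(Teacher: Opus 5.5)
Your proposal is correct and follows the paper's proof. You get (ii)$\Leftrightarrow$(iii) from the standard fact that $P$ and $-\lambda\,\mathrm{id}-P$ are simultaneously Rota-Baxter operators of weight $\lambda$, and for (i)$\Leftrightarrow$(ii) you expand the Rota-Baxter identity for $\widehat{T}$, finding that the difference of the two sides is exactly $T(u)\circ T(v)-T\big(\rho(T(u))v+\rho(T(v))u+\lambda u\circ_V v\big)$. The only cosmetic difference is that you assemble the argument $Z$ componentwise before applying $\widehat{T}$, whereas the paper applies $\widehat{T}$ to each of the three products separately.
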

\begin{proof}
	The implication \ref{xyzw:semi:2} $\Longleftrightarrow $ \ref{xyzw:semi:3} is a consequence of the standard fact that $P: A \to A$ is a Rota-Baxter operator of weight $\lambda$ on $(A, \circ)$ if and only if $-\lambda \id-P: A \to A$ is a Rota-Baxter operator of weight $\lambda$.
	Thus, it is sufficient to establish~\ref{xyzw:semi:1} $\Longleftrightarrow$ \ref{xyzw:semi:2}.
	For all $x, y \in A$ and $u, v \in V$, we have
	\begin{align*}
		\widehat{T}(x + u) \bullet \widehat{T}(y+ v)  & =  T(u) \circ T(v) -  \lambda T(u) \circ y -  \lambda x \circ T(v)+\lambda^{2} x \circ y,   \\
		\widehat{T}(\widehat{T}(x + u) \bullet (y+v)) & = \lambda^{2} x \circ y - \lambda T(u) \circ y - \lambda T(\rho(x) v) +  T(\rho(T(u)) v), \\
		\widehat{T}((x+u) \bullet \widehat{T}(y+v))   & =\lambda^{2} x \circ y -  \lambda x \circ T(v) -  \lambda T( \rho(y)u) +  T( \rho(T(v))u),  \\
		\widehat{T}((x+ u) \bullet (y+v))         & = -\lambda x \circ y + T(\rho(x) v) + T(\rho(y)u) + T(u \circ_V v).
	\end{align*}
	Therefore
	\begin{align*}
		& \widehat{T}(x + u) \bullet \widehat{T}(y+ v) - \widehat{T}(\widehat{T}(x + u) \bullet (y+v) + (x+u) \bullet \widehat{T}(y+v) + \lambda (x+ u) \bullet (y+v)) \\
		=\; & T(u) \circ T(v) - T(\rho(T(u)) v + \rho(T(v))u + \lambda u \circ_V v ).
	\end{align*}
	Thus, $\widehat{T}$ is a Rota-Baxter operator of weight $\lambda$ on $(A \ltimes_{\rho} V, \bullet)$ if and only if $T$ is a relative Rota-Baxter operator of weight $\lambda$ on $(A, \circ)$ associated to $((V, \circ_{V}); \rho)$.
	The proof is complete.
\end{proof}

%%%%%%%%%%%%%%%%%%%%%%%%%%%%%%%%%%%%%%%%%%%%%%%%%%%%%%%%%%%%%%%%%%%%%%%%%%%%%%%%
\subsection{Extended relative Rota-Baxter operators}

\begin{definition}
	Let $(A, \circ)$ be a Jordan algebra, $(V; \rho)$ be a representation of $(A, \circ)$, and $T, S: V \to A$ be linear maps.
	Then $T$ is called an \textbf{extended relative Rota-Baxter operator with extension $S$} on $(A, \circ)$ associated to $(V; \rho)$ if
	\begin{equation}
		T(u) \circ T(v) - T\big(\rho(T(u))v + \rho(T(v))u \big)  = - S(u) \circ S(v), \;\; \forall u, v \in V. \label{eq:exto}
	\end{equation}
\end{definition}

\begin{remark}
	Let $(A, \circ)$ be a Jordan algebra and regard $(V; \rho)$ as an $A$-module Jordan algebra equipped with the trivial Jordan algebra structure. 
	When $S = 0$, an extended relative Rota-Baxter operator with extension $S$ on $(A, \circ)$ associated to $(V; \rho)$ reduces to a relative Rota-Baxter operator of weight $0$ as defined in Definition~\ref{def:oo}, which is precisely the $\mathcal{O}$-operator introduced in~\cite[Definition~3.5]{hou2013pre}.
\end{remark}

\begin{lemma}\label{lem:rb2eo}
	Let $(A, \circ)$ be a Jordan algebra and $T: A \to A$ be a linear map.
	Then $T$ is a Rota-Baxter operator of weight $\lambda$ on $(A, \circ)$ if and only if $T + \frac{\lambda}{2}\id$ is an extended relative Rota-Baxter operator with extension $\frac{\lambda}{2}\id$ on $(A, \circ)$ associated to $(A; L_{\circ})$.
\end{lemma}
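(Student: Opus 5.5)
This is a direct computation. Set $P = T + \frac{\lambda}{2}\id$ and $S = \frac{\lambda}{2}\id$, and take $V = A$, $\rho = L_\circ$, so that $\rho(P(x))y = P(x)\circ y$. By \eqref{eq:exto}, $P$ is an extended relative Rota-Baxter operator with extension $S$ exactly when
\begin{equation*}
	P(x)\circ P(y) - P\big(P(x)\circ y + x\circ P(y)\big) + S(x)\circ S(y) = 0, \;\; \forall x, y \in A.
\end{equation*}
The plan is to expand the left-hand side in terms of $T$ and show that it equals
\begin{equation*}
	T(x)\circ T(y) - T\big(T(x)\circ y + x\circ T(y) + \lambda x\circ y\big).
\end{equation*}
Both implications then follow at once from Definition~\ref{def:oo}.

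First I expand the product:
\begin{equation*}
	P(x)\circ P(y) = T(x)\circ T(y) + \tfrac{\lambda}{2}\big(T(x)\circ y + x\circ T(y)\big) + \tfrac{\lambda^2}{4}x\circ y .
\end{equation*}
Next, the argument of $P$ is
\begin{equation*}
	P(x)\circ y + x\circ P(y) = T(x)\circ y + x\circ T(y) + \lambda x\circ y .
\end{equation*}
Applying $P = T + \frac{\lambda}{2}\id$ to this gives
\begin{equation*}
	T\big(T(x)\circ y + x\circ T(y) + \lambda x\circ y\big) + \tfrac{\lambda}{2}\big(T(x)\circ y + x\circ T(y)\big) + \tfrac{\lambda^2}{2}x\circ y .
\end{equation*}
Finally, the extension term is $S(x)\circ S(y) = \frac{\lambda^2}{4}x\circ y$.

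Combining these, the mixed terms $\frac{\lambda}{2}(T(x)\circ y + x\circ T(y))$ cancel. The scalar terms give $\frac{\lambda^2}{4} - \frac{\lambda^2}{2} + \frac{\lambda^2}{4} = 0$. What remains is precisely the Rota-Baxter expression of weight $\lambda$ for $T$. No Jordan identity is needed, only commutativity and bilinearity of $\circ$. The only point requiring care is the bookkeeping of the $\lambda^2$ coefficients, so I expect no real obstacle.
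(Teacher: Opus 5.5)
Your proposal is correct and follows essentially the same route as the paper: the paper's proof is precisely the identity you derive, namely that $P(x)\circ P(y) - P\big(P(x)\circ y + x\circ P(y)\big) + \frac{\lambda^2}{4}x\circ y$ equals the weight-$\lambda$ Rota-Baxter expression for $T$. Your bookkeeping of the $\lambda$ and $\lambda^2$ terms checks out.
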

\begin{proof}
	For all $x, y \in A$, we have
	\begin{align*}
		 & (T + \frac{\lambda}{2}\id)(x) \circ (T + \frac{\lambda}{2}\id)(y) - (T + \frac{\lambda}{2}\id)\left( (T + \frac{\lambda}{2}\id)(x) \circ y + x \circ (T + \frac{\lambda}{2}\id)(y)\right) + \frac{\lambda^2}{4} x \circ y \\
		 & = T(x) \circ T(y) - T(T(x) \circ y + x \circ T(y) + \lambda x \circ y).
	\end{align*}
	Therefore, the proof is complete.
\end{proof}

\begin{definition}
	Let $(A, \circ)$ be a Jordan algebra and $(V; \rho)$ be a representation of $(A, \circ)$.
	A linear map $S: V \to A$ is called \textbf{balanced} associated to $(V; \rho)$ if
	\begin{equation}
		\rho(S(u))v = \rho(S(v))u, \;\; \forall u, v \in V. \label{eq:bal}
	\end{equation}
	If, in addition, $S: V \to A$ is a homomorphism of representations from $(V; \rho)$ to $(A; L_{\circ})$, i.e.,
	\begin{equation}
		S(\rho(x)u) = x \circ S(u), \;\; \forall x \in A, \; u \in V, \label{eq:ainv}
	\end{equation}
	then $S$ is called a \textbf{balanced homomorphism of representations} from $(V; \rho)$ to $(A; L_{\circ})$.
\end{definition}

\begin{proposition}\label{prop:bae2aj}
	Let $(A, \circ)$ be a Jordan algebra and $(V; \rho)$ be a representation of $(A, \circ)$.
	Suppose that $S: V \to A$ is a balanced homomorphism of representations from $(V; \rho)$ to $(A; L_{\circ})$.
	Define a binary operation $\cdot_{S}: V \otimes V \to V$ on $V$ by
	\begin{equation}
		u \cdot_{S} v = - 2 \rho(S(u))v, \;\; \forall u, v \in V. \label{eq:bae2aj}
	\end{equation}
	Then $((V, \cdot_{S}); \rho)$ is an $A$-module Jordan algebra.
\end{proposition}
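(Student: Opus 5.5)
The plan is to reduce every required identity to the representation axioms \eqref{repJor1}--\eqref{repJor2} together with the Jordan identity in $A$, using the two hypotheses on $S$. The reduction rests on two elementary consequences of the hypotheses.

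First, the balanced condition \eqref{eq:bal} gives $u \cdot_S v = -2\rho(S(u))v = -2\rho(S(v))u = v \cdot_S u$, so $\cdot_S$ is commutative.

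Second, by \eqref{eq:ainv},
\begin{equation*}
S(u \cdot_S v) = -2 S(\rho(S(u))v) = -2\, S(u) \circ S(v).
\end{equation*}
So $-\tfrac12 S$ is a homomorphism from $(V,\cdot_S)$ to $(A,\circ)$. Iterating this, the $S$-image of any nested $\cdot_S$-product is a scalar multiple of the corresponding $\circ$-product of the $S$-images. Consequently, any nested $\cdot_S$-product of vectors can be rewritten as a word in operators $\rho(a)$, with $a$ a $\circ$-polynomial in the elements $S(u), S(v), \dots$, applied to a single vector. For example, writing $a=S(u)$, $b=S(v)$, $d=S(w)$,
\begin{equation*}
((u\cdot_S v)\cdot_S w)\cdot_S z = -8\,\rho((a\circ b)\circ d)z, \qquad (u\cdot_S v)\cdot_S(w\cdot_S z) = -8\,\rho(a\circ b)\rho(d)z.
\end{equation*}

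I would then verify the Jordan identity for $\cdot_S$ in its linearized form \eqref{eq:jord}, with vectors $u,v,z,w$ in place of $x,y,z,w$. After the rewriting above, each term becomes a $\rho$-expression acting on one of the vectors. The balanced condition is exactly the freedom needed to choose which vector stays as the argument: $\rho(S(p))q=\rho(S(q))p$ whenever the outermost operator comes from an $S$-image. The idea is to move all six terms of \eqref{eq:jord} so that they act on the same vector $z$ (or on a fixed one). The resulting identity should then be an instance of \eqref{repJor1}, possibly combined with \eqref{repJor2} and the Jordan identity \eqref{eq:jord} in $A$ applied to $S(u),S(v),S(w)$.

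For the four module axioms \eqref{Ajord1}--\eqref{Ajord4}, the same mechanism applies with an extra $\rho(x)$, $x\in A$. Here one additionally uses \eqref{eq:ainv} in the form $S(\rho(x)u)=x\circ S(u)$, so that $\rho(x)u$ contributes the element $x\circ S(u)$ when it appears inside a product. For instance, in \eqref{Ajord3} the left side $\rho(y)\rho(x)(u\cdot_S v)=-2\rho(y)\rho(x)\rho(a)v$. The term $(\rho(x)u)\cdot_S(\rho(y)v)$ becomes $-2\rho(x\circ a)\rho(y)v$, and similar expressions arise for the remaining terms. After moving everything onto the common vector $v$ via balance, each identity should reduce to \eqref{repJor1} or \eqref{repJor2} evaluated at the triple $(x,y,a)$ (or $(x,a,b)$) and applied to $v$.

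The main obstacle is the bookkeeping in this last step. The choice of which vector to keep as the argument is not canonical. Balance applies only when the outermost operator is $\rho$ of an $S$-image, not for $\rho(x)$ with arbitrary $x$. A poor choice yields expressions that match no axiom. I would first try to always move the argument to a vector that appears innermost in some term, and then compare against \eqref{repJor1}. If a discrepancy remains, I would use \eqref{repJor2} to commute operators $\rho(x)$ past $\rho(a\circ b)$, or use \eqref{ID:jord2} on the $A$-side.
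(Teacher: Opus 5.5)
Your plan is essentially the paper's proof. Both get commutativity from \eqref{eq:bal}, use \eqref{eq:bal} and \eqref{eq:ainv} to rewrite every nested $\cdot_S$-product as a word in operators $\rho(\cdot)$ applied to a single vector, and close each identity with \eqref{repJor1}--\eqref{repJor2}. The bookkeeping you flag does go through: keeping $w$ as the argument reduces \eqref{Ajord2} to \eqref{repJor2} at $(x,S(u),S(v))$, keeping $v$ reduces \eqref{Ajord3}--\eqref{Ajord4} to \eqref{repJor1} plus \eqref{repJor2} at the triple $x,y,S(u)$, and the Jordan identity in $A$ is never needed. The only variation is that for the Jordan identity of $\cdot_S$ the paper writes the terms as $\rho(S(w))\rho(S(t))\rho(S(v))u$ rather than $\rho((a\circ b)\circ d)z$, so there \eqref{repJor2} alone suffices.
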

\begin{proof}
	Let $u, v, w, t$.
	Then
	\begin{align*}
		u \cdot_{S} v & = -2\rho(S(u))v \overset{\eqref{eq:bal}}{=} -2\rho(S(v))u = v \cdot_{S} u.
	\end{align*}
	That is, $(V, \cdot_{S})$ is commutative.
	Moreover,
	\begin{align*}
		& ((u \cdot_{S} v) \cdot_{S} t) \cdot_{S} w + ((v \cdot_{S} w) \cdot_{S} t) \cdot_{S} u+((w \cdot_{S} u) \cdot_{S} t) \cdot_{S} v \\
		\overset{\hphantom{\eqref{eq:bal},\eqref{eq:ainv}}}{=}\; & -8\rho(S(w))\rho(S(t)) \rho(S(v))u - 8\rho(S(u))\rho(S(t)) \rho(S(w))v -8\rho(S(v))\rho(S(t)) \rho(S(u))w \\
		\overset{\eqref{eq:bal},\eqref{eq:ainv}}{=}\;
		 & -8\rho(S(w))\rho(S(v) \circ S(u)) t -8\rho(S(u))\rho(S(w) \circ S(v)) t -8\rho(S(v))\rho(S(u) \circ S(w)) t                        \\
		\overset{\;\;\;\;\eqref{repJor2}\;\;\;\;}{=}\; & -8\rho(S(v) \circ S(u)) \rho(S(w)) t -8\rho(S(w) \circ S(v)) \rho(S(u))t - 8\rho(S(u) \circ S(w))\rho(S(v)) t                      \\
		\overset{\;\;\;\eqref{eq:ainv}\;\;\;}{=}\; & -8\rho(S( \rho(S(v)) u )) \rho(S(w)) t -8\rho(S( \rho(S(w)) v )) \rho(S(u)) t - 8\rho(S( \rho(S(u)) w )) \rho(S(v)) t \\
		\overset{\hphantom{\eqref{eq:bal},\eqref{eq:ainv}}}{=}\;  & (u \cdot_{S} v) \cdot_{S}(t \cdot_{S} w)+(v \cdot_{S} w) \cdot_{S}(t \cdot_{S} u)+(w \cdot_{S} u) \cdot_{S}(t \cdot_{S} v).
	\end{align*}
	Hence, $(V, \cdot_{S})$ is a Jordan algebra.
	Furthermore, for all $x \in A$, we have
	\begin{align*}
		& u \cdot_{S} \rho(x)(v \cdot_{S} w) + v \cdot_{S} \rho(x)(w \cdot_{S} u) + w \cdot_{S} \rho(x)(u \cdot_{S} v)\nonumber \\
		\overset{\hphantom{\eqref{eq:bal},\eqref{eq:ainv}}}{=}\; & 4\rho(S( u ))\rho(x) \rho(S(w)) v
		 + 4\rho(S( v ))\rho(x) \rho(S(u)) w
		 + 4\rho(S( w ))\rho(x) \rho(S(v)) u \\
		\overset{\eqref{eq:bal},\eqref{eq:ainv}}{=}\; & 4\rho(x \circ (S(w) \circ S(v)))u
		 + 4\rho(S( v ))\rho(x) \rho(S(w)) u
		 + 4\rho(S( w ))\rho(x) \rho(S(v)) u \\
		\overset{\;\;\;\;\eqref{repJor1}\;\;\;\;}{=}\; & 4\rho(x) \rho (S(v) \circ S(w))u
		 + 4\rho(S( v ))\rho ( S(w)\circ x) u
		 + 4\rho(S( w ))\rho ( x \circ S(v) ) u \\
		\overset{\;\;\;\;\eqref{repJor2}\;\;\;\;}{=}\; & 4\rho (S(v) \circ S(w))\rho(x)u
		 + 4\rho ( S(w)\circ x) \rho(S( v ))u
		 + 4\rho ( x \circ S(v) ) \rho(S( w ))u \\
		\overset{\;\;\;\eqref{eq:ainv}\;\;\;}{=}\; & (\rho(x)u) \cdot_{S} (v \cdot_{S} w) + (\rho(x)w) \cdot_{S} (u \cdot_{S} v) + (\rho(x)v) \cdot_{S} (w \cdot_{S} u).
	\end{align*}
	Therefore, Eq.~\eqref{Ajord1} holds. 
	Similarly, Eqs.~\eqref{Ajord2}-\eqref{Ajord4} hold. 
	Thus, $((V, \cdot_{S}); \rho)$ is an $A$-module Jordan algebra.
\end{proof}

\begin{theorem}\label{thm:eo2ps}
	Let $(A, \circ)$ be a Jordan algebra, $(V; \rho)$ be a representation of $(A, \circ)$ and $T, S: V \to A$ be two linear maps.
	If $S$ is balanced homomorphism of representations from $(V; \rho)$ to $(A; L_{\circ})$ and $T$ is an extended relative Rota-Baxter operator with extension $S$ associated to $(V; \rho)$, then $(V, \cdot_{T})$ is a Jordan algebra, where $\cdot_{T}$ is defined by
	\begin{equation}
		u \cdot_{T} v:= \rho(T(u))v + \rho(T(v))u, \;\; \forall u, v \in V. \label{eq:o2nj}
	\end{equation}
\end{theorem}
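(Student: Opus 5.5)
The plan is to reduce the statement to a relative Rota-Baxter operator of \emph{nonzero} weight, and then use the standard graph argument in a semi-direct product. By Proposition~\ref{prop:bae2aj}, the balanced homomorphism $S$ makes $((V,\cdot_S);\rho)$ an $A$-module Jordan algebra, where $u\cdot_S v=-2\rho(S(u))v$. I claim that $P:=T+S$ is a relative Rota-Baxter operator of weight $1$ on $(A,\circ)$ associated to $((V,\cdot_S);\rho)$. I also claim that the product $\cdot_T$ of \eqref{eq:o2nj} is exactly the product induced by $P$.

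\textbf{Step 1: $P$ is a relative Rota-Baxter operator of weight $1$.} I expand $P(u)\circ P(v)-P\big(\rho(P(u))v+\rho(P(v))u+u\cdot_S v\big)$ and treat the terms in four groups.
\begin{enumerate}
\item[(a)] The pure $T$-terms give $T(u)\circ T(v)-T(\rho(T(u))v+\rho(T(v))u)$. By \eqref{eq:exto} this equals $-S(u)\circ S(v)$.
\item[(b)] By \eqref{eq:ainv} we have $S(\rho(T(u))v)=T(u)\circ S(v)$. Hence the mixed terms $T(u)\circ S(v)+S(u)\circ T(v)$ cancel against $S(\rho(T(u))v+\rho(T(v))u)$.
\item[(c)] By \eqref{eq:bal} and \eqref{eq:ainv}, $\rho(S(u))v+\rho(S(v))u=-u\cdot_S v$. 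So $-T(\rho(S(u))v+\rho(S(v))u)$ cancels $-T(u\cdot_S v)$.
\item[(d)] The remaining terms are $S(u)\circ S(v)-S(\rho(S(u))v+\rho(S(v))u)-S(u\cdot_S v)$. Each of $S(\rho(S(u))v)$, $S(\rho(S(v))u)$ and $-\tfrac12 S(u\cdot_S v)$ equals $S(u)\circ S(v)$. Hence this group equals $S(u)\circ S(v)-2S(u)\circ S(v)+2S(u)\circ S(v)=S(u)\circ S(v)$.
\end{enumerate}
Adding the groups, (a) and (d) cancel and the total is $0$, which proves the claim. I expect this bookkeeping to be the main obstacle. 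The delicate point is keeping track of the factor $-2$ in $\cdot_S$, which is what makes the weight come out exactly $1$.

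\textbf{Step 2: the induced Jordan algebra.} I will prove the following general fact. If $P$ is a relative Rota-Baxter operator of weight $1$ associated to an $A$-module Jordan algebra $((V,\circ_V);\rho)$, then $u\ast v:=\rho(P(u))v+\rho(P(v))u+u\circ_V v$ is a Jordan product on $V$. The method is to consider the graph $G_P=\{P(u)+u : u\in V\}$ inside the semi-direct product Jordan algebra $(A\ltimes_\rho V,\bullet)$. A direct computation gives $(P(u)+u)\bullet(P(v)+v)=P(u)\circ P(v)+u\ast v$. By the Rota-Baxter identity this equals $P(u\ast v)+u\ast v$. So $G_P$ is a subalgebra, hence a Jordan algebra. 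The linear isomorphism $u\mapsto P(u)+u$ then transports this structure to $(V,\ast)$, which is therefore a Jordan algebra.

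\textbf{Step 3: identification with $\cdot_T$.} Take $P=T+S$ and $\circ_V=\cdot_S$. Then
\[
u\ast v=\rho(T(u))v+\rho(T(v))u+\big(\rho(S(u))v+\rho(S(v))u\big)+u\cdot_S v .
\]
The bracketed term equals $-u\cdot_S v$, by the same computation as in Step~1(c). Hence $u\ast v=u\cdot_T v$, so $(V,\cdot_T)$ is a Jordan algebra.
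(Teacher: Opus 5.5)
Your proof is correct, and it takes a genuinely different route from the paper's.

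The paper checks the Jordan identity $((u\cdot_T u)\cdot_T w)\cdot_T u=(u\cdot_T u)\cdot_T(w\cdot_T u)$ directly. It rewrites $T(u\cdot_T v)=T(u)\circ T(v)+S(u)\circ S(v)$ via \eqref{eq:exto}, removes the pure $T$-terms with \eqref{repJor1}--\eqref{repJor2}, and simplifies the $S$-terms with \eqref{eq:bal} and \eqref{eq:ainv}. It then kills the leftover $\rho((S(u)\circ S(u))\circ T(w))u-\rho(S(u)\circ S(u))\rho(T(w))u$ by showing that three times it vanishes.

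Your Steps~1 and~3 are exactly the computation the paper gives later for (i)$\Leftrightarrow$(iii) of Theorem~\ref{thm:opm}, and that part does not use the present theorem. Your Step~2 replaces the direct check by realizing $(V,\cdot_T)$ as the graph of $T+S$, which is a subalgebra of $A\ltimes_\rho(V,\cdot_S)$.

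This is more conceptual, and it gives more than the statement asks. Composing the graph isomorphism with the projection $A\ltimes_\rho V\to A$ shows at once that $T+S$ is a homomorphism $(V,\cdot_T)\to(A,\circ)$, i.e.\ (i)$\Rightarrow$(ii) of Theorem~\ref{thm:opm}. Step~2 also shows directly that every relative Rota--Baxter operator of weight $1$ on an $A$-module Jordan algebra induces a Jordan algebra.

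The cost is that the hard work is relocated rather than removed. You depend on Proposition~\ref{prop:bae2aj} in full; the paper verifies only \eqref{Ajord1} explicitly and states the remaining axioms ``similarly''. You also depend on the semi-direct product criterion, whose proof the paper only cites.

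The fact you actually need, that $A\ltimes_\rho(V,\cdot_S)$ is a Jordan algebra, does hold. Checking $[L_X,L_{X\bullet X}]=0$ blockwise for $X=x+u$ reduces, via \eqref{repJor1}, \eqref{repJor2}, \eqref{eq:bal} and \eqref{eq:ainv}, to the identity $\rho((S(u)\circ S(u))\circ y)u=\rho(S(u)\circ y)\rho(S(u))u$. With $y=T(w)$, this is essentially the identity the paper extracts in its final step by the factor-$3$ argument. The paper's proof, by contrast, is self-contained from the representation axioms.
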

\begin{proof}
	For all $u, w \in V$, we have
	\begin{equation*}
		T(u \cdot_{T} v) = T(\rho(T(u))v + \rho(T(v))u) \overset{\eqref{eq:exto}}{=} T(u) \circ T(v) + S(u) \circ S(v),
	\end{equation*}
	and
	\begin{align*}
		&((u \cdot_{T} u) \cdot_{T} w) \cdot_{T} u - (u \cdot_{T} u) \cdot_{T} (w \cdot_{T} u) \\
		\overset{\hphantom{\;\;\eqref{repJor1},\eqref{repJor2}\;\;}}{=}\; & \rho(T((u \cdot_{T} u) \cdot_{T} w))u + \rho(T(u))((u \cdot_{T} u) \cdot_{T} w) - \rho(T(u \cdot_{T} u))(w \cdot_{T} u) - \rho(T(w \cdot_{T} u)) (u \cdot_{T} u) \\
		\overset{\hphantom{\;\;\eqref{repJor1},\eqref{repJor2}\;\;}}{=}\; & \rho(T( \rho(T(u \cdot_{T} u))w + \rho(T(w))(u \cdot_{T} u)  )u + \rho(T(u))( \rho(T(u \cdot_{T} u))w + \rho(T(w))(u \cdot_{T} u) ) \\
		& - 2\rho(T( \rho(T(u))u ))( \rho(T(u))w + \rho(T(w))u ) - 2\rho(T( \rho(T(u))w + \rho(T(w))u )) \rho(T(u))u \\
		\overset{\;\;\;\eqref{eq:exto}\;\;\;}{=}\; & \rho(T(u \cdot_{T} u) \circ T(w))u +  \rho(S(u \cdot_{T} u) \circ S(w))u \\
		& + \rho(T(u)) \rho(T(u) \circ T(u) + S(u) \circ S(u) )w + 2\rho(T(u)) \rho(T(w))\rho(T(u))u \\
		& - \rho(T(u) \circ T(u) + S(u) \circ S(u))( \rho(T(u))w + \rho(T(w))u ) \\
		& - 2\rho(T(u) \circ T(w) + S(u) \circ S(w)) \rho(T(u))u \\
		\overset{\hphantom{\;\;\eqref{repJor1},\eqref{repJor2}\;\;}}{=}\; & \rho((T(u) \circ T(u)) \circ T(w)) u + \rho((S(u) \circ S(u)) \circ T(w)) u + 2 \rho((T(u) \circ S(u)) \circ S(w))u \\
		& + \rho(T(u)) \rho(T(u) \circ T(u))w + \rho(T(u)) \rho(S(u) \circ S(u) )w + 2 \rho(T(u)) \rho(T(w)) \rho(T(u))u \\
		& - \rho(T(u) \circ T(u)) \rho(T(u))w - \rho(T(u) \circ T(u)) \rho(T(w))u - \rho(S(u) \circ S(u)) \rho(T(u))w \\
		& - \rho(S(u) \circ S(u)) \rho(T(w))u - 2 \rho(T(u) \circ T(w)) \rho(T(u)) u - 2 \rho(S(u) \circ S(w)) \rho(T(u)) u \\
		\overset{\;\;\eqref{repJor1},\eqref{repJor2}\;\;}{=}\; & \rho((S(u) \circ S(u)) \circ T(w)) u + 2 \rho((T(u) \circ S(u)) \circ S(w))u + \rho(T(u)) \rho(S(u) \circ S(u) )w \\
		& - \rho(S(u) \circ S(u)) \rho(T(u))w - \rho(S(u) \circ S(u)) \rho(T(w))u - 2 \rho(S(u) \circ S(w)) \rho(T(u)) u \\
		\overset{\eqref{eq:bal},\eqref{eq:ainv}}{=}\; & \rho((S(u) \circ S(u)) \circ T(w)) u + 2 \rho(S(u))\rho(T(u) \circ S(u)) w + \rho(T(u)) \rho(S(u) \circ S(u) )w \\
		& - \rho(S(u) \circ S(u)) \rho(T(u))w - \rho(S(u) \circ S(u)) \rho(T(w))u - 2 \rho(T(u) \circ S(u)) \rho(S(u))w \\
		\overset{\;\;\eqref{repJor1},\eqref{repJor2}\;\;}{=}\; & \rho((S(u) \circ S(u)) \circ T(w)) u - \rho(S(u) \circ S(u)) \rho(T(w))u.
	\end{align*}
	But, for all $u, w$, we have
	\begin{align*}
		& \rho((S(u) \circ S(u)) \circ T(w)) u - \rho(S(u) \circ S(u)) \rho(T(w))u \\
		\overset{\;\;\eqref{repJor1}, \eqref{repJor2}\;\;}{=}\; & 2 \rho(S(u) \circ T(w)) \rho(S(u)) u - 2 \rho(S(u)) \rho(T(w))\rho(S(u)) u \\
		\overset{\eqref{eq:bal},\eqref{eq:ainv}}{=}\; & 2 \rho(S(u) \circ S(u)) \rho(T(w))u - 2 \rho((S(u) \circ S(u)) \circ T(w))u,
	\end{align*}
	that is 
	\begin{equation*}
		3(\rho((S(u) \circ S(u)) \circ T(w)) u - \rho(S(u) \circ S(u)) \rho(T(w))u) = 0.
	\end{equation*}
	Therefore, $((u \cdot_{T} u) \cdot_{T} w) \cdot_{T} u - (u \cdot_{T} u) \cdot_{T} (w \cdot_{T} u) = 0$ and thus $(V, \cdot_{T})$ is a Jordan algebra.
\end{proof}

\begin{corollary}
	Let $(A, \circ)$ be a Jordan algebra and $T: V \to A$ be a Rota-Baxter operator of weight $\lambda$ on $(A, \circ)$.
	Then $(A, \cdot_{T})$ is a Jordan algebra, where $\cdot_{T}$ is defined by
	\begin{equation*}
		x \cdot_{T} y:= T(x) \circ y + x \circ T(y) + \lambda x \circ y, \;\; \forall x, y \in A. 
	\end{equation*}
\end{corollary}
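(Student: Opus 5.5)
The plan is to deduce the corollary directly from Theorem~\ref{thm:eo2ps}, using Lemma~\ref{lem:rb2eo} to put $T$ into the required form; no new identity checking should be needed. (Note that in the statement $T$ is really a map $A \to A$, so $V=A$.)

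\textbf{Step 1: rewrite $T$ as an extended operator.} Take the representation $(V;\rho)=(A;L_\circ)$. By Lemma~\ref{lem:rb2eo}, the map $T' := T + \frac{\lambda}{2}\id$ is an extended relative Rota-Baxter operator with extension $S := \frac{\lambda}{2}\id$ on $(A,\circ)$ associated to $(A;L_\circ)$.

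\textbf{Step 2: check the hypotheses on $S$.} Theorem~\ref{thm:eo2ps} requires $S$ to be a balanced homomorphism of representations from $(A;L_\circ)$ to $(A;L_\circ)$.
\begin{itemize}
\item Balancedness \eqref{eq:bal} reads $\frac{\lambda}{2}x \circ y = \frac{\lambda}{2} y\circ x$. This holds by commutativity of $\circ$.
\item The homomorphism condition \eqref{eq:ainv} reads $\frac{\lambda}{2}(x\circ y) = x\circ \frac{\lambda}{2}y$. This is immediate.
\end{itemize}

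\textbf{Step 3: apply the theorem and identify the product.} Theorem~\ref{thm:eo2ps} now shows that $(A,\cdot_{T'})$ is a Jordan algebra, where
\[
x\cdot_{T'} y = T'(x)\circ y + T'(y)\circ x .
\]
Expanding $T' = T + \frac{\lambda}{2}\id$ and using commutativity gives
\[
x\cdot_{T'} y = T(x)\circ y + x\circ T(y) + \lambda\, x\circ y ,
\]
which is exactly $x\cdot_T y$ as defined in the corollary.

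The only delicate point is the bookkeeping in Step 3: one must confirm that the two $\frac{\lambda}{2}$ terms combine to give weight $\lambda$. Everything else is formal.
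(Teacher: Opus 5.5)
Your proposal is correct and is the paper's own proof. Lemma~\ref{lem:rb2eo} turns $T$ into the extended operator $T+\frac{\lambda}{2}\id$ with extension $\frac{\lambda}{2}\id$, and Theorem~\ref{thm:eo2ps} then gives the Jordan structure, whose product expands to $x\cdot_T y$. You also check explicitly that $\frac{\lambda}{2}\id$ is a balanced homomorphism of representations, a hypothesis of Theorem~\ref{thm:eo2ps} that the paper leaves implicit, so including it is a small improvement.
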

\begin{proof}
	By Lemma~\ref{lem:rb2eo}, $T + \frac{\lambda}{2}\id$ is an extended relative Rota-Baxter operator with extension $\frac{\lambda}{2}\id$ associated to $(A; L_{\circ})$.
	Theorem~\ref{thm:eo2ps} then implies that the binary operation
	\begin{equation*}
		x \cdot y = (T + \frac{\lambda}{2}\id)(x) \circ y + x \circ (T + \frac{\lambda}{2}\id)(y) = x \cdot_{T} y
	\end{equation*}
	defines a Jordan algebra structure on $A$.
	The proof is complete.
\end{proof}

Let $A, V$ be vector spaces, and $\pi_{\pm}: V \to A$ be two linear maps.
Set
\begin{equation*}
	T:= \frac{1}{2}(\pi_{+} + \pi_{-}), \;\;
	S := \frac{1}{2}(\pi_{+} - \pi_{-}),
\end{equation*}
which are called the \textbf{symmetrizer} and \textbf{antisymmetrizer} of $\pi_{\pm}$, respectively.
Note that $\pi_{\pm}$ can be resolved from $T$ and $S$ by $\pi_{\pm} = T \pm S$.

\begin{lemma}\label{lem:iphr}
	Let $(A, \circ)$ be a Jordan algebra and $(V; \rho)$ be a representation of $(A, \circ)$. 
	Let $\pi_{\pm}: V \to A$ be two linear maps, and $T, S$ be the symmetrizer and antisymmetrizer of $\pi_{\pm}$ respectively.
	Suppose that $S$ is a homomorphism of representations from $(V; \rho)$ to $(A; L_{\circ})$.
	Then the following conditions are equivalent.
	\begin{enumerate}[label=(\roman*)]
		\item\label{it:eo2h1}
		      $T$ is an extended relative Rota-Baxter operator with extension $S$ associated $(V; \rho)$.

		\item\label{it:eo2h2}
		      $\pi_{+}(u) \circ \pi_{+}(v)  = \pi_{+}(\rho(T(u))v + \rho(T(v))u)$ holds for all $u, v \in V$.

		\item\label{it:eo2h3}
		      $\pi_{-}(u) \circ \pi_{-}(v) = \pi_{-}(\rho(T(u))v + \rho(T(v))u)$ holds for all $u, v \in V$.

	\end{enumerate}
\end{lemma}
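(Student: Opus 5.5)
Substituting $\pi_{\pm} = T \pm S$ into conditions \ref{it:eo2h2} and \ref{it:eo2h3} reduces each of them to Eq.~\eqref{eq:exto}. The only input needed is that $S$ is a homomorphism of representations, i.e. Eq.~\eqref{eq:ainv}: $S(\rho(x)u) = x \circ S(u)$. Balancedness is not assumed and should not be needed.

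Fix $u, v \in V$. Expand the left-hand side of \ref{it:eo2h2}:
\begin{equation*}
	\pi_{+}(u) \circ \pi_{+}(v) = T(u) \circ T(v) + T(u) \circ S(v) + S(u) \circ T(v) + S(u) \circ S(v).
\end{equation*}
The right-hand side of \ref{it:eo2h2} is
\begin{equation*}
	T\big(\rho(T(u))v + \rho(T(v))u\big) + S\big(\rho(T(u))v + \rho(T(v))u\big).
\end{equation*}
By Eq.~\eqref{eq:ainv}, $S(\rho(T(u))v) = T(u) \circ S(v)$ and $S(\rho(T(v))u) = T(v) \circ S(u)$. Using commutativity of $\circ$, these two terms cancel exactly the cross terms $T(u) \circ S(v) + S(u) \circ T(v)$ on the left. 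Hence
\begin{equation*}
	\pi_{+}(u) \circ \pi_{+}(v) - \pi_{+}\big(\rho(T(u))v + \rho(T(v))u\big) = T(u) \circ T(v) - T\big(\rho(T(u))v + \rho(T(v))u\big) + S(u) \circ S(v).
\end{equation*}
The right-hand side vanishes precisely when Eq.~\eqref{eq:exto} holds, so \ref{it:eo2h1} $\Longleftrightarrow$ \ref{it:eo2h2}.

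For \ref{it:eo2h3}, repeat the computation with $S$ replaced by $-S$. The cross terms now appear with a minus sign both in $\pi_{-}(u) \circ \pi_{-}(v)$ and in $-S(\rho(T(u))v + \rho(T(v))u)$, so they again cancel. The quadratic term $(-S(u)) \circ (-S(v)) = S(u) \circ S(v)$ is unchanged. Thus the defect of \ref{it:eo2h3} equals the same expression as above, which gives \ref{it:eo2h1} $\Longleftrightarrow$ \ref{it:eo2h3}.

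There is no real obstacle here. The only thing to watch is the bookkeeping of signs, checking that the cross terms cancel rather than double in both the $\pi_{+}$ and the $\pi_{-}$ case.
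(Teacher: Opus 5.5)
Your proposal is correct and is essentially the paper's argument: expand $\pi_{\pm} = T \pm S$, use Eq.~\eqref{eq:ainv} together with commutativity of $\circ$ to cancel the cross terms, and observe that both defects reduce to $T(u) \circ T(v) + S(u) \circ S(v) - T\big(\rho(T(u))v + \rho(T(v))u\big)$. Your explicit sign check for the $\pi_{-}$ case spells out what the paper leaves as ``follows similarly.''
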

\begin{proof}
	For all $u, v \in V$, we have
	\begin{align*}
		& \pi_{+}(u) \circ \pi_{+}(v) - \pi_{+}(\rho(T(u))v + \rho(T(v))u)                                          \\
		\overset{\hphantom{\eqref{eq:ainv}}}{=}\; &  (T+S)(u) \circ (T+S)(v) - (T+S)(\rho(T(u))v + \rho(T(v))u )       \\
		\overset{\hphantom{\eqref{eq:ainv}}}{=}\; &  T(u) \circ T(v) + S(u) \circ S(v) - T(\rho(T(u))v + \rho(T(v))u)  \\
		& + T(u) \circ S(v) + S(u) \circ T(v) - S(\rho(T(u))v + \rho(T(v))u)                                  \\
		\overset{\eqref{eq:ainv}}{=}\; & T(u) \circ T(v) + S(u) \circ S(v) - T(\rho(T(u))v + \rho(T(v))u).
	\end{align*}
	Hence, we show the equivalence between \ref{it:eo2h1} and \ref{it:eo2h2}.
	The equivalence \ref{it:eo2h1} $\Longleftrightarrow$ \ref{it:eo2h3} follows similarly.
\end{proof}

The following result provides equivalent characterizations of extended relative Rota-Baxter operators via the symmetrizer-antisymmetrizer decomposition.

\begin{theorem}\label{thm:opm}
	Let $(A, \circ)$ be a Jordan algebra and $(V; \rho)$ be a representation of $(A, \circ)$. 
	Let $\pi_{\pm}: V \to A$ be two linear maps, and $T, S$ be the symmetrizer and antisymmetrizer of $\pi_{\pm}$ respectively.
	Suppose that $S$ is a balanced homomorphism of representations from $(V; \rho)$ to $(A; L_{\circ})$.
	Then, the following conditions are equivalent.
	\begin{enumerate}[label=(\roman*)]
		\item\label{it:oeoheo}
		      $T$ is an extended relative Rota-Baxter operator with extension $S$ on $(A, \circ)$ associated to $(V; \rho)$.

		\item\label{it:oeohh}
		      $(V, \cdot_{T})$ is a Jordan algebra, and $\pi_{+}$ (resp. $\pi_{-}$) is a homomorphism of Jordan algebras between $(V, \cdot_{T})$ and $(A, \circ)$, where $\cdot_{T}: V \otimes V \to V$ is defined by Eq.~\eqref{eq:o2nj}.

		\item\label{it:oeoho}
		      $\pi_{+}$ (resp. $\pi_{-}$) is a relative Rota-Baxter operator of weight $1$ (resp. $-1$) on $(A, \circ)$ associated to $((V, \cdot_{S}); \rho)$, where $((V, \cdot_{S}); \rho)$ is the $A$-module Jordan algebra obtained in Proposition~\ref{prop:bae2aj}.

	\end{enumerate}
\end{theorem}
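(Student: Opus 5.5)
We prove \ref{it:oeoheo} $\Rightarrow$ \ref{it:oeohh} $\Rightarrow$ \ref{it:oeoheo} and \ref{it:oeoheo} $\Leftrightarrow$ \ref{it:oeoho}, treating $\pi_{+}$ throughout; the $\pi_{-}$ case is identical with signs flipped.

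For \ref{it:oeoheo} $\Rightarrow$ \ref{it:oeohh}: since $S$ is a balanced homomorphism of representations and $T$ is an extended relative Rota-Baxter operator with extension $S$, Theorem~\ref{thm:eo2ps} gives directly that $(V,\cdot_T)$ is a Jordan algebra. Moreover, $S$ is in particular a homomorphism of representations, so Lemma~\ref{lem:iphr} \ref{it:eo2h1} $\Rightarrow$ \ref{it:eo2h2} gives
\[
\pi_{+}(u)\circ\pi_{+}(v)=\pi_{+}(u\cdot_T v).
\]
Thus $\pi_{+}$ is a homomorphism of Jordan algebras; likewise for $\pi_{-}$ via \ref{it:eo2h3}.

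For \ref{it:oeohh} $\Rightarrow$ \ref{it:oeoheo}: the homomorphism property of $\pi_{+}$ is literally condition \ref{it:eo2h2} of Lemma~\ref{lem:iphr}, so $T$ is an extended relative Rota-Baxter operator with extension $S$. The Jordan-algebra part of \ref{it:oeohh} is not even needed in this direction.

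For \ref{it:oeoheo} $\Leftrightarrow$ \ref{it:oeoho}: Proposition~\ref{prop:bae2aj} makes $((V,\cdot_S);\rho)$ an $A$-module Jordan algebra, so the statement is meaningful. The key step is to rewrite the weight-$1$ relative Rota-Baxter identity for $\pi_{+}$. Write $\pi_{+}=T+S$. Then
\[
\rho(\pi_{+}(u))v+\rho(\pi_{+}(v))u+u\cdot_S v
=\rho(T(u))v+\rho(T(v))u+\rho(S(u))v+\rho(S(v))u-2\rho(S(u))v .
\]
By the balanced condition \eqref{eq:bal}, $\rho(S(u))v=\rho(S(v))u$, so the $S$-terms cancel and the right side equals $u\cdot_T v$. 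Hence the relative Rota-Baxter identity of weight $1$ for $\pi_{+}$ becomes exactly condition \ref{it:eo2h2} of Lemma~\ref{lem:iphr}.

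For $\pi_{-}=T-S$ with weight $-1$, the $S$-terms give $-2\rho(S(u))v-(-2\rho(S(u))v)=0$, which again reduces the identity to \ref{it:eo2h3}. Lemma~\ref{lem:iphr} then closes the equivalence.

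The main point requiring care is this cancellation: it is where the balanced hypothesis is used and where the weights $\pm1$ and the coefficient $-2$ in \eqref{eq:bae2aj} must match up. Beyond that, the proof is bookkeeping, since the heavy lifting is already done in Theorem~\ref{thm:eo2ps} and Lemma~\ref{lem:iphr}.
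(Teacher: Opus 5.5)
Your proposal is correct and follows the paper's proof essentially step for step. You use Theorem~\ref{thm:eo2ps} together with Lemma~\ref{lem:iphr} for (i)$\Leftrightarrow$(ii), and the same balanced-condition cancellation (the $-2\rho(S(u))v$ in $\cdot_S$ absorbing the $S$-terms) to reduce the weight $\pm1$ relative Rota-Baxter identity for $\pi_{\pm}$ to conditions (ii)/(iii) of Lemma~\ref{lem:iphr}.
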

\begin{proof}
	\ref{it:oeoheo} $\Longrightarrow$ \ref{it:oeohh}.
	By Theorem~\ref{thm:eo2ps}, $(V, \cdot_{T})$ is a Jordan algebra.
	Then, Lemma~\ref{lem:iphr} shows that $\pi_{+}$ is a homomorphism of Jordan algebras.

	\ref{it:oeohh} $\Longrightarrow$ \ref{it:oeoheo}.
	It follows from Lemma~\ref{lem:iphr}.

	\ref{it:oeoheo} $\Longleftrightarrow$ \ref{it:oeoho}.
	As in the proof of Lemma~\ref{lem:iphr}, we have
	\begin{equation*}
		T(u) \circ T(v) + S(u) \circ S(v) - T(\rho(T(u))v + \rho(T(v))u) = \pi_{+}(u) \circ \pi_{+}(v) - \pi_{+}\big( \rho(T(u))v + \rho(T(v))u \big).
	\end{equation*}
	On the other hand, we have
	\begin{align*}
		 & \rho(\pi_{+}(u))v + \rho(\pi_{+}(v))u + u \cdot_{S} v =\rho(\pi_{+}(u))v + \rho(\pi_{+}(v))u - 2 \rho(S(u))v \\
		 \overset{\hphantom{\eqref{eq:bal}}}{=}\; & \rho(T(u))v + \rho(T(v))u + \rho(S(u))v + \rho(S(v))u - 2 \rho(S(u))v  \\
		 \overset{\eqref{eq:bal}}{=}\; & \rho(T(u))v + \rho(T(v))u.
	\end{align*}
	Therefore,
	\begin{align*}
		&T(u) \circ T(v) + S(u) \circ S(v) - T(\rho(T(u))v + \rho(T(v))u) \\
		=\; & \pi_{+}(u) \circ \pi_{+}(v) - \pi_{+}\big( \rho(\pi_{+}(u))v + \rho(\pi_{+}(v))u + u \cdot_{S} v \big).
	\end{align*}
	This proves the equivalence between  \ref{it:oeoheo}  and  \ref{it:oeoho}  for $\pi_{+}$.
	The case of $\pi_{-}$ follows similarly.
\end{proof}

%%%%%%%%%%%%%%%%%%%%%%%%%%%%%%%%%%%%%%%%%%%%%%%%%%%%%%%%%%%%%%%%%%%%%%%%%%%%%%%%
%%%%%%%%%%%%%%%%%%%%%%%%%%%%%%%%%%%%%%%%%%%%%%%%%%%%%%%%%%%%%%%%%%%%%%%%%%%%%%%%
%%%%%%%%%%%%%%%%%%%%%%%%%%%%%%%%%%%%%%%%%%%%%%%%%%%%%%%%%%%%%%%%%%%%%%%%%%%%%%%%
\section{Quasi-triangular Jordan D-bialgebras and extended relative Rota-Baxter operators}\label{sec:JYBE}

In this section, we recall the notion of the Jordan Yang-Baxter equations, which arise naturally in the study of Jordan D-bialgebras.
We then introduce quasi-triangular Jordan D-bialgebras, constructing from solutions of the Jordan Yang-Baxter equations with invariant symmetric part.
Finally, we establish the relationship between extended relative Rota-Baxter operators and such solutions.

%%%%%%%%%%%%%%%%%%%%%%%%%%%%%%%%%%%%%%%%%%%%%%%%%%%%%%%%%%%%%%%%%%%%%%%%%%%%%%%%
\subsection{Quasi-triangular Jordan D-bialgebras}
Let $\mathbb{S}_{4}$ be the symmetric group of order $4$ acting naturally on $V^{\otimes 4}$ by permuting tensor factors
\begin{equation*}
	\sigma (v_{1} \otimes v_{2} \otimes v_{3} \otimes v_{4}) = v_{\sigma(1)} \otimes v_{\sigma(2)} \otimes v_{\sigma(3)} \otimes v_{\sigma(4)}, \quad \forall v_{i} \in V,
\end{equation*}
and let $\Omega := \{\id^{\otimes 4}, \sigma_{124}, \sigma_{142}\}$ be the subgroup of $\mathbb{S}_{4}$ generated by the indicated permutations.

\begin{definition}
	A \textbf{Jordan coalgebra} is a pair $(A, \Delta)$, where $A$ is a vector space and $\Delta: A \to A \otimes A$ is a linear map satisfying the following equalities for all $x \in A$:
	\begin{align*}
		\tau \Delta(x) - \Delta(x) & = 0, \\
		\sum_{\sigma\in \Omega}\sigma \Big((\Delta \otimes \id \otimes \id)(\Delta \otimes \id - \id \otimes \Delta)\Delta (x)\Big) & = 0,
	\end{align*}
	where $\tau: V \otimes V \to V\otimes V$ is the exchange operator defined by $\tau(u \otimes v) = v \otimes u$ for all $u, v \in V$.
\end{definition}

\begin{remark}
	Let $A$ be a finite-dimensional vector space and $\Delta: A \to A \otimes A$ be a linear map. 
	Let $\circ_{A^*}: A^* \otimes A^* \to A^*$ be the linear dual of $\Delta$ defined by
	\begin{align*}
		\langle a^* \circ_{A^{*}} b^*, x \rangle := \langle a^* \otimes b^*, \Delta(x) \rangle,\;\forall x \in A, \; a^*,b^* \in A^{*}.
	\end{align*}
	Then $(A, \Delta)$ is a Jordan coalgebra if and only if $(A^*, \circ_{A^*})$ is a Jordan algebra~\cite[Lemma 4.4]{hou2022new}.
\end{remark}

\begin{definition}
	A {\bf Jordan D-bialgebra} is a triple $(A, \circ, \Delta)$, where $(A, \circ)$ is a Jordan algebra and $(A,\Delta)$ is a Jordan coalgebra satisfying the following compatible conditions for all $x, y, z \in A$:
	\begin{align}
		& \Delta((x \circ y) \circ z) + (y \otimes 1) \circ ((z \otimes 1) \circ \Delta(x))
		+ (1 \otimes y) \circ ((1 \otimes z) \circ \Delta(x)) \notag                                                                                                                                         \\
		    & + (x \otimes 1) \circ ((z \otimes 1) \circ \Delta(y)) + (1 \otimes x) \circ ((1 \otimes z) \circ \Delta(y)) + (x \otimes y + y \otimes x) \circ \Delta(z) \notag                               \\
		=\; & (x \otimes 1) \circ \Delta(y \circ z) + (y \otimes 1) \circ \Delta(z \circ x) + (z \otimes 1) \circ \Delta(x \circ y) \notag                                                                   \\
		    & + (1 \otimes (y \circ z)) \circ \Delta(x) + (1 \otimes (z \circ x)) \circ \Delta(y) + (1 \otimes (x \circ y)) \circ \Delta(z), \label{eq:jdb1}                                                 \\
		    & (\Delta \otimes \id)((x \otimes 1) \circ \Delta(y)) + (\id \otimes \Delta)((1 \otimes x) \circ \Delta(y)) + (\tau \otimes \id)((\id \otimes \Delta)((1 \otimes x) \circ \Delta(y))) \notag   \\
		    & + (\Delta \otimes \id)((1 \otimes y) \circ \Delta(x))
		+ (\id \otimes \Delta)((y \otimes 1) \circ \Delta(x)) + (\tau \otimes \id)((\text{id} \otimes \Delta)((y \otimes 1) \circ \Delta(x))) \notag                                                       \\
		=\; & (\Delta \otimes \text{id})\Delta(x \circ y) + (1 \otimes x \otimes 1) \circ ((\text{id} \otimes \Delta)\Delta(y)) + (1 \otimes y \otimes 1) \circ ((\text{id} \otimes \Delta)\Delta(x)) \notag \\
		    & + (x \otimes 1 \otimes 1) \circ ((\text{id} \otimes \tau)((\Delta \otimes \text{id})\Delta(y)))
		+ (y \otimes 1 \otimes 1) \circ ((\text{id} \otimes \tau)((\Delta \otimes \text{id})\Delta(x))) \notag                                                                                             \\
		    & + (1 \otimes \Delta(x)) \circ ((\text{id} \otimes \tau)((\Delta(y) \otimes 1))) + (1 \otimes \Delta(y)) \circ ((\text{id} \otimes \tau)((\Delta(x) \otimes 1))). \label{eq:jdb2}
	\end{align}
\end{definition}

\begin{definition}[\cite{zhelyabin2000class}]
	A Jordan D-bialgebra $(A, \circ, \Delta)$ is called \textbf{coboundary} if there exists $r \in A \otimes A$ such that:
	\begin{equation}
		\Delta(x) = \Delta_r(x) := ( \id \otimes L_{\circ}(x) - L_{\circ}(x) \otimes \id )(r), \;\; \forall x \in A. \label{eq:jcbd}
	\end{equation}
\end{definition}

\begin{definition}[\cite{zhelyabin2000class}]
	Let $(A, \circ)$ be a Jordan algebra. 
	Then $r \in A \otimes A$ is called a solution of the {\bf Jordan Yang-Baxter equation (JYBE)} in $(A, \circ)$ if
	\begin{equation*}
		\mathbf{J}(r) := r_{12} \circ r_{13} - r_{12} \circ r_{23} + r_{13} \circ r_{23} = 0.
	\end{equation*}
\end{definition}

\begin{proposition}[\cite{hou2022new}]\label{prop:cobdppb}
	Let $(A, \circ)$ be a Jordan algebra and $r = \sum\nolimits_{i} x_i \otimes y_i \in A \otimes A$.
	Define a linear map $\Delta: A \to A \otimes A$ by Eq.~\eqref{eq:jcbd}.
	Then
	\begin{enumerate}[label=(\roman*)]
		\item\label{tdb:co1}
		      $(A,\Delta)$ is a Jordan coalgebra if and only if the following equalities hold for all $x \in A$:
		      \begin{align}
			      ( \id \otimes L_{\circ}(x)-L_{\circ}(x) \otimes \id )(r + \tau(r)) & = 0,  \label{CJBco:1} \\
			      \sum_{\sigma\in \Omega}\sigma(A(1) + A(3))                         & = 0, \label{CJBco:2}
		      \end{align}
		      where
		      \begin{align*}
			      A(1) =\; & (\Delta \otimes \id \otimes \id)(L_{\circ}(x) \otimes \id \otimes \id - \id \otimes \id \otimes L_{\circ}(x)) \mathbf{J}(r)                                                          \\
			               & + \sum_{j} \bigl\{ -\big((\id \otimes \id \otimes L_{\circ}(x \circ x_j)) (r_{12} \circ r_{13} - r_{12} \circ r_{23} + r_{31} \circ r_{32}) \big) \otimes y_j                        \\
			               & + \big( (\id \otimes \id \otimes L_{\circ}(x_j) L_{\circ}(x) ) (r_{12} \circ r_{13} - r_{12} \circ r_{23} + r_{31} \circ r_{32}) \big) \otimes y_j                                   \\
			               & + \sigma_{34} \bigl[ \big( (\id \otimes \id \otimes L_{\circ}(x) L_{\circ}(y_j)) (r_{12} \circ r_{13} - r_{12} \circ r_{23} + r_{31} \circ r_{32}) \big) \otimes x_j \bigr]          \\
			               & - \sigma_{34} \bigl[ \big( (\id \otimes \id \otimes L_{\circ}(y_j) L_{\circ}(x)) (r_{12} \circ r_{13} - r_{12} \circ r_{23} + r_{31} \circ r_{32}) \big) \otimes x_j \bigr] \bigr\}, \\
			      A(3) =\; & \sum_{j} - y_j \otimes \bigl\{ (\id \otimes \id \otimes L_{\circ}(x)) \big([(\id \otimes \id \otimes L_{\circ}(x_j))r_{31}] \circ (r_{23} + r_{32}) \big) \bigr\}                    \\
			               & + \sum_j y_j \otimes \bigl\{ [(\id \otimes \id \otimes L_{\circ}(x) L_{\circ}(x_j))r_{31}] \circ (r_{23} + r_{32}) \bigr\}                                                           \\
			               & + \sum_j \bigl\{ (r_{12} + r_{21}) \circ [(\id \otimes (L_{\circ}(x) L_{\circ}(x_j) - L_{\circ}(x \circ x_j)) \otimes \id) r_{23} ]\bigr\} \otimes y_j.
		      \end{align*}
		\item\label{tdb:co2} $(A, \circ, \Delta)$ is a coboundary Jordan D-bialgebra if and only if Eqs.~\eqref{CJBco:1}-\eqref{CJBco:2} and the following equality holds for all $x, y \in A$:
		      \begin{align}
			       & \bigl( L_{\circ}(y) L_{\circ}(x) \otimes \id \otimes \id + \id \otimes \id \otimes L_{\circ}(y) L_{\circ}(x) - \id \otimes \id \otimes L_{\circ}(x \circ y) \bigr)\mathbf{J}(r) \notag   \\
			       & - \bigl( L_{\circ}(x) \otimes L_{\circ}(y) \otimes \id + L_{\circ}(y) \otimes L_{\circ}(x) \otimes \id \bigr) \mathbf{J}(r)
			      + (\id \otimes L_{\circ}(y) L_{\circ}(x) \otimes \id)\mathbf{J}(r) \notag                                                                                           \\
			       & + \bigl( L_{\circ}(x) \otimes L_{\circ}(y) \otimes \id + L_{\circ}(y) \otimes L_{\circ}(x) \otimes \id \bigr) ( r_{13} \circ r_{23}+ r_{13} \circ r_{32})\notag                  \\
			       & - (\id \otimes L_{\circ}(y) L_{\circ}(x) \otimes \id)( r_{12} \circ r_{13} + r_{21} \circ r_{13} ) \notag                                                        \\
			       & + r_{13} \circ \bigl( (L_{\circ}(x) L_{\circ}(y) \otimes \id \otimes \id )(r_{12} + r_{21}) \bigr) \notag                                                        \\
			       & - (L_{\circ}(x) \otimes \id \otimes \id) \bigl( r_{13} \circ [(\id \otimes \id \otimes L_{\circ}(y))(r_{23} + r_{32})] \bigr) \notag                             \\
			       & - \bigl( L_{\circ}(y) \otimes \id \otimes \id \bigr) \bigl( r_{13} \circ [(\id \otimes \id \otimes L_{\circ}(x))(r_{23} + r_{32})] \bigr) = 0. \label{eq:hou3.6}
		      \end{align}
	\end{enumerate}
\end{proposition}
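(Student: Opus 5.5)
The plan is to substitute $\Delta=\Delta_r$ from Eq.~\eqref{eq:jcbd} into each axiom and expand in terms of $r=\sum_i x_i\otimes y_i$. The resulting expressions are then regrouped into $\mathbf{J}(r)$, into the symmetric part $r+\tau(r)$, and into remainders that cancel by the Jordan identities~\eqref{eq:jord} and~\eqref{ID:jord2}, or by the representation identities~\eqref{repJor1}--\eqref{repJor2} for $L_\circ$.

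\emph{Part~\ref{tdb:co1}.} First, cocommutativity. Since $\tau\Delta_r(x)=(L_\circ(x)\otimes\id-\id\otimes L_\circ(x))\tau(r)$, we get
\begin{equation*}
\Delta_r(x)-\tau\Delta_r(x)=(\id\otimes L_\circ(x)-L_\circ(x)\otimes\id)(r+\tau(r)),
\end{equation*}
which gives Eq.~\eqref{CJBco:1} immediately.

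Next, the co-Jordan identity. I would write
\begin{equation*}
(\Delta\otimes\id-\id\otimes\Delta)\Delta_r(x)=\sum_{i,j}\bigl(\cdots\bigr)
\end{equation*}
as a sum of terms of the shape $x_j\circ x_i\otimes y_j\otimes x\circ y_i$, and so on. Grouping the terms that contain two copies of $r$ with the products landing in the three slots should produce $(L_\circ(x)\otimes\id\otimes\id-\id\otimes\id\otimes L_\circ(x))\mathbf{J}(r)$. The terms that do not fit into $\mathbf{J}(r)$ involve $r_{31}\circ r_{32}$ or $r_{23}+r_{32}$; I would rewrite them as $r_{12}\circ r_{13}-r_{12}\circ r_{23}+r_{31}\circ r_{32}$ plus corrections.

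Then I would apply $\Delta\otimes\id\otimes\id$. This expands $\Delta_r$ once more in the first slot and creates operators $L_\circ(x)L_\circ(x_j)$ and $L_\circ(x\circ x_j)$ in the fourth slot. The summands are then sorted into the part denoted $A(1)$, where $\mathbf{J}(r)$ or the modified three-tensor appears, and the part $A(3)$, where the symmetric part $r_{23}+r_{32}$ or $r_{12}+r_{21}$ appears. Any remaining "$A(2)$-type" pieces would be ones that vanish after applying $\sum_{\sigma\in\Omega}\sigma$. I would check this vanishing by applying~\eqref{ID:jord2} in the first tensor factor, since the cyclic sum over $\Omega=\{\id,\sigma_{124},\sigma_{142}\}$ is exactly the cyclic structure of Eq.~\eqref{eq:jord}.

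\emph{Part~\ref{tdb:co2}.} Assuming~\ref{tdb:co1}, it remains to verify the compatibility conditions~\eqref{eq:jdb1} and~\eqref{eq:jdb2}.

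For~\eqref{eq:jdb1}, I expect it to hold identically for any $r$. Both sides are linear in $r$, so it suffices to take $r=a\otimes b$. Each side then splits into a first-slot part and a second-slot part. The first-slot identity is exactly~\eqref{repJor1} for the regular representation $L_\circ$, and the second-slot identity is~\eqref{ID:jord2}, or equivalently Eq.~\eqref{eq:jord} with $w=b$.

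For~\eqref{eq:jdb2}, I would expand both sides quadratically in $r$. Terms of the form $(\cdot\otimes\cdot\otimes\cdot)\mathbf{J}(r)$ are collected as in part~\ref{tdb:co1}. The leftover terms, which are those not expressible through $\mathbf{J}(r)$, are organized into the symmetric-part expressions $r_{13}\circ(r_{23}+r_{32})$ and $(r_{12}+r_{21})\circ r_{13}$ with the indicated operators applied. The difference of the two sides should then equal the left-hand side of Eq.~\eqref{eq:hou3.6}.

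The main obstacle is bookkeeping. The four-tensor expansion in~\ref{tdb:co1} and the three-tensor, two-element expansion in~\ref{tdb:co2} produce dozens of terms. The delicate step is deciding which leftover terms cancel via~\eqref{repJor1}--\eqref{repJor2} and~\eqref{ID:jord2} and which must be retained as the $A(1)$, $A(3)$ and~\eqref{eq:hou3.6} contributions. I would handle this by fixing the index placements $x_i,y_i,x_j,y_j$ in each slot first, and then matching terms by slot pattern.
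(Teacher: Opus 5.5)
The paper gives no proof of this proposition; it is quoted from \cite{hou2022new}. The only argument available is the direct substitution of $\Delta_r$, which is what you propose.

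The parts you actually check are correct. $\Delta_r(x)-\tau\Delta_r(x)=(\id\otimes L_\circ(x)-L_\circ(x)\otimes\id)(r+\tau(r))$ gives \eqref{CJBco:1}. Also, \eqref{eq:jdb1} does hold for every $r$. For $r=a\otimes b$ all mixed terms cancel, and the difference of the two sides is $a\otimes(M-N')b-(M-N)a\otimes b$, where
\begin{align*}
M&=L_\circ((x\circ y)\circ z)+L_\circ(y)L_\circ(z)L_\circ(x)+L_\circ(x)L_\circ(z)L_\circ(y),\\
N&=L_\circ(x)L_\circ(y\circ z)+L_\circ(y)L_\circ(z\circ x)+L_\circ(z)L_\circ(x\circ y),
\end{align*}
and $N'$ is $N$ with each product reversed. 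The second slot therefore needs both \eqref{repJor1} (giving $M=N$) and \eqref{repJor2} (giving $N=N'$). Eq.~\eqref{eq:jord} with $w=b$ gives only $Nb=N'b$, and \eqref{ID:jord2} is a different identity.

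The genuine gap is that the rest of the statement is announced (``should produce'', ``should then equal'') rather than derived. That rest is the explicit $A(1)$, $A(3)$ and the left side of \eqref{eq:hou3.6}, and the hints you give about how it arises are inaccurate. Extracting $\mathbf{J}(r)$ leaves exactly
\begin{align*}
&(\Delta_r\otimes\id-\id\otimes\Delta_r)\Delta_r(x)-\bigl(L_\circ(x)\otimes\id\otimes\id-\id\otimes\id\otimes L_\circ(x)\bigr)\mathbf{J}(r)\\
&=\sum_{i,j}\Bigl([L_\circ(x_i),L_\circ(x)]x_j\otimes y_i\otimes y_j+x_i\otimes[L_\circ(x_j),L_\circ(y_i)]x\otimes y_j+x_i\otimes x_j\otimes[L_\circ(x),L_\circ(y_j)]y_i\Bigr).
\end{align*}
This is nonzero in general: for $r=e_1\otimes e_2$ in Example~\ref{ex:JordanXY}(a) and $x=e_1$ it equals $-\tfrac14 e_1\otimes e_2\otimes e_2$. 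It is a sum of commutator terms, not terms involving $r_{31}\circ r_{32}$ or $r_{23}+r_{32}$ as you claim. The combination $r_{12}\circ r_{13}-r_{12}\circ r_{23}+r_{31}\circ r_{32}$ and the symmetric-part terms of $A(3)$ appear only after $\Delta_r\otimes\id\otimes\id$ is applied to this remainder and the result is reorganized under $\sum_{\sigma\in\Omega}\sigma$. That reorganization is the whole content of (i), and you do not perform it.

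Your proposed mechanism for the leftover terms, \eqref{ID:jord2} in the first tensor factor, cannot use the symmetrization. $\Omega=\{\id^{\otimes 4},\sigma_{124},\sigma_{142}\}$ cycles slots $1,2,4$ and fixes slot $3$, so the other two summands move a first-slot product into slots $2$ and $4$, where it cannot be combined with the original. After relabelling indices, the cyclic structure of $\Omega$ matches that of \eqref{eq:jord} in the fixed third slot; this is how Lemma~\ref{lem:pr} works. Even the simpler cancellation in Lemma~\ref{lem:sfh} needs \eqref{eq:pr1}, \eqref{ID:jord2}, \eqref{CJBco:1} and the step $\mathfrak{S}=-2\mathfrak{S}$, not mere sorting.

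The same applies to (ii): no term of \eqref{eq:jdb2} is actually expanded. As written, the proposal is the right plan with its decisive computation missing. To make it a proof you must carry out the cubic and quadratic expansions, with the index relabellings under $\Omega$ written out explicitly.
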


\begin{lemma}\label{lem:pr}
	Let $(A, \circ)$ be a Jordan algebra and $r = \sum\nolimits_{i} x_i \otimes y_i \in A \otimes A$.
	Then the following equality holds for all $x \in A$:
	\begin{align}
		\sum_{\sigma\in \Omega} \sum_{i, j, k} \sigma\Big(x_i \otimes x_j \otimes \big((x \circ (y_i \circ y_j) )\circ y_k - (x \circ y_k) \circ (y_i \circ y_j) \big)\otimes x_k\Big) = 0. \label{eq:pr1}
	\end{align}
\end{lemma}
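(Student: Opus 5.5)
The plan is to reduce \eqref{eq:pr1} to a pointwise identity in the third tensor factor by re-indexing the summation, and then to recognise that identity as a specialisation of the linearised Jordan identity \eqref{eq:jord}.

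First, I determine the effect of each $\sigma \in \Omega$ on a general summand
\begin{equation*}
x_i \otimes x_j \otimes F(y_i, y_j, y_k) \otimes x_k,
\qquad
F(a,b,c) := (x \circ (a \circ b)) \circ c - (x \circ c) \circ (a \circ b).
\end{equation*}
The permutations $\sigma_{124}$ and $\sigma_{142}$ fix the third tensor slot. They cyclically permute the factors sitting in slots $1,2,4$. For instance, one of them turns the summand into
\begin{equation*}
x_k \otimes x_i \otimes F(y_i,y_j,y_k) \otimes x_j ,
\end{equation*}
and the other turns it into
\begin{equation*}
x_j \otimes x_k \otimes F(y_i,y_j,y_k) \otimes x_i .
\end{equation*}
Which is which depends on the convention for the action, but this is irrelevant below.

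Since $i,j,k$ run independently over the same index set, I rename the summation indices cyclically in each of the two permuted sums. After renaming, slots $1,2,4$ again carry $x_i, x_j, x_k$. The third slot then carries $F(y_j,y_k,y_i)$ in one sum and $F(y_k,y_i,y_j)$ in the other. Using linearity in the third slot, the left-hand side of \eqref{eq:pr1} becomes
\begin{equation*}
\sum_{i,j,k} x_i \otimes x_j \otimes \big(F(y_i,y_j,y_k)+F(y_j,y_k,y_i)+F(y_k,y_i,y_j)\big) \otimes x_k .
\end{equation*}

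It therefore suffices to show that the cyclic sum $F(a,b,c)+F(b,c,a)+F(c,a,b)$ vanishes for all $a,b,c\in A$. By commutativity of $\circ$, we have $(x\circ(a\circ b))\circ c = ((a\circ b)\circ x)\circ c$ and $(x\circ c)\circ(a\circ b) = (a\circ b)\circ(x\circ c)$. Hence the vanishing of the cyclic sum is exactly identity \eqref{eq:jord} with $(x,y,z,w)$ replaced by $(a,b,c,x)$, using $x\circ c = c\circ x$ on the right-hand side. This identity holds because $\Char(\mathbf{k})=0$.

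The only point needing care is the bookkeeping in the re-indexing step. One must check that a cyclic permutation of tensor positions $1,2,4$ corresponds, after renaming, to a cyclic (not arbitrary) permutation of the arguments of $F$, so that the three terms form exactly the cyclic sum appearing in \eqref{eq:jord}. Once this is verified, the remaining argument is a direct application of \eqref{eq:jord}.
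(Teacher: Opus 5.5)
Your proposal is correct and follows essentially the same route as the paper: expand the three terms of the $\Omega$-sum, cyclically rename the indices so that slots $1,2,4$ carry $x_i, x_j, x_k$, and observe that the third slot becomes the cyclic sum $F(y_i,y_j,y_k)+F(y_j,y_k,y_i)+F(y_k,y_i,y_j)$. That sum vanishes by \eqref{eq:jord} with $(x,y,z,w)\mapsto(a,b,c,x)$. Your bookkeeping and the substitution into \eqref{eq:jord} both check out.
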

\begin{proof}
	For all $x \in A$, we have
	\begin{align*}
		& \sum_{\sigma\in \Omega} \sum_{i, j, k} \sigma\Big(x_i \otimes x_j \otimes \big((x \circ (y_i \circ y_j) )\circ y_k - (x \circ y_k) \circ (y_i \circ y_j) \big)\otimes x_k\Big) \\
		\overset{\hphantom{\eqref{eq:jord}}}{=} \; & \sum_{i, j, k} \Big(x_i \otimes x_j \otimes \big((x \circ (y_i \circ y_j) )\circ y_k - (x \circ y_k) \circ (y_i \circ y_j) \big)\otimes x_k \\
		& + x_j \otimes x_k \otimes \big((x \circ (y_i \circ y_j) )\circ y_k - (x \circ y_k) \circ (y_i \circ y_j) \big)\otimes x_i \\
		& + x_k \otimes x_i \otimes \big((x \circ (y_i \circ y_j) )\circ y_k - (x \circ y_k) \circ (y_i \circ y_j) \big)\otimes x_j\Big) \\
		\overset{\hphantom{\eqref{eq:jord}}}{=} \; & \sum_{i, j, k} \Big(x_i \otimes x_j \otimes \big((x \circ (y_i \circ y_j) )\circ y_k - (x \circ y_k) \circ (y_i \circ y_j) \big)\otimes x_k \\
		& + x_i \otimes x_j \otimes \big((x \circ (y_k \circ y_i) )\circ y_j - (x \circ y_j) \circ (y_k \circ y_i) \big)\otimes x_k \\
		& + x_i \otimes x_j \otimes \big((x \circ (y_j \circ y_k) )\circ y_i - (x \circ y_i) \circ (y_j \circ y_k) \big)\otimes x_k\Big) \\
		\overset{\eqref{eq:jord}}{=}\; & 0.
	\end{align*}
	The proof is complete.
\end{proof}

\begin{lemma}\label{lem:sfh}
	Let $(A, \circ)$ be a Jordan algebra and $r = \sum\nolimits_{i} x_i \otimes y_i \in A \otimes A$.
	Define a linear map $\Delta: A \to A \otimes A$ by Eq.~\eqref{eq:jcbd}.
	Suppose that Eq.~\eqref{CJBco:1} holds for all $x \in A$.
	Then
	\begin{enumerate}[label=(\roman*)]
		\item\label{it:hr1}
		    Eq.~\eqref{CJBco:2} holds for all $x \in A$ if and only if for all $x \in A$,
		    \begin{align}
			    & (\Delta \otimes \id \otimes \id)(L_{\circ}(x) \otimes \id \otimes \id - \id \otimes \id \otimes L_{\circ}(x)) (\mathbf{J}(r)) \notag                                                                          \\
			    & + \sum_{i} \big( (-\id \otimes \id \otimes L_{\circ}(x \circ x_i) + \id \otimes \id \otimes L_{\circ}(x_i) L_{\circ}(x)) (\mathbf{J}(r)) \big) \otimes y_i \notag                                             \\
			    & + \sum_{i} \sigma_{34} \bigl[ \big( (\id \otimes \id \otimes L_{\circ}(x) L_{\circ}(y_i) - \id \otimes \id \otimes L_{\circ}(y_i) L_{\circ}(x) ) (\mathbf{J}(r)) \big) \otimes x_i \bigr] = 0. \label{eq:jcs}
		    \end{align}

		\item\label{it:hr2}
		    Eq.~\eqref{eq:hou3.6} holds for all $x, y \in A$ if and only if for all $x,y \in A$,
		    \begin{align}
			    & ( L_{\circ}(x) L_{\circ}(y) \otimes \id \otimes \id + \id \otimes \id \otimes L_{\circ}(x) L_{\circ}(y) - \id \otimes \id \otimes L_{\circ}(x \circ y) \bigr)( \mathbf{J}(r)) \notag        \\
			    & + ( \id \otimes L_{\circ}(x)L_{\circ}(y) \otimes \id - L_{\circ}(x) \otimes L_{\circ}(y) \otimes \id - L_{\circ}(y) \otimes L_{\circ}(x) \otimes \id ) ( \mathbf{J}(r)) = 0. \label{eq:jbcs}
		    \end{align}
	\end{enumerate}
\end{lemma}
\begin{proof}
	\ref{it:hr1}. Let $x, y \in A$. On one hand, by Lemma~\ref{lem:pr}, we have
	\begin{eqnarray*}
		&&\sum_{\sigma \in \Omega}\sigma\bigg(\sum_{j} \bigl\{ -\big((\id \otimes \id \otimes L_{\circ}(x \circ x_j)) (-r_{13} \circ r_{23} + r_{31} \circ r_{32}) \big) \otimes y_j \\
		&&\quad + \big( (\id \otimes \id \otimes L_{\circ}(x_j) L_{\circ}(x) ) (-r_{13} \circ r_{23} + r_{31} \circ r_{32}) \big) \otimes y_j \\
		&&\quad + \sigma_{34} \bigl[ \big( (\id \otimes \id \otimes L_{\circ}(x) L_{\circ}(y_j)) (-r_{13} \circ r_{23} + r_{31} \circ r_{32}) \big) \otimes x_j \bigr] \\
		&&\quad - \sigma_{34} \bigl[ \big( (\id \otimes \id \otimes L_{\circ}(y_j) L_{\circ}(x)) (-r_{13} \circ r_{23} + r_{31} \circ r_{32}) \big) \otimes x_j \bigr] \bigr\} + \underbrace{A(3)} \bigg) \\
		&&\overset{\eqref{eq:pr1}}{=} \sum_{\sigma \in \Omega}\sigma\bigg( \sum_{i,j, k} \{ x_i \otimes x_k \otimes (x \circ x_j) \circ (y_i \circ y_k) \otimes y_j - x_i \otimes x_k \otimes x_j \circ (x \circ (y_i \circ y_k)) \otimes y_j \\
		&&\quad\quad - x_i \otimes x_k \otimes x_j \otimes x \circ (y_j \circ (y_i \circ y_k)) + x_i \otimes x_k \otimes x_j \otimes y_j \circ (x \circ (y_i \circ y_k)) \\
		&&\quad\quad - y_k \otimes y_i \otimes y_j \otimes x \circ (x_j \circ (x_i \circ x_k)) + y_k \otimes y_i \otimes y_j \otimes x_j \circ (x \circ (x_i \circ x_k)) \\
		&&\quad\quad \underbrace{ + x_i \otimes y_i \circ (x \circ (x_k \circ x_j)) \otimes y_j \otimes y_k - x_i \otimes y_i \circ (x_j \circ (x \circ x_k)) \otimes y_j \otimes y_k} \\
		&&\quad\quad \underbrace{ + y_i \otimes x_i \circ (x \circ (x_k \circ x_j)) \otimes y_j \otimes y_k - y_i \otimes x_i \circ (x_j \circ (x \circ x_k)) \otimes y_j \otimes y_k \}} \bigg) \\
		&&\overset{\eqref{eq:pr1}}{=}\sum_{\sigma \in \Omega}\sigma\bigg( \sum_{i,j, k} \{ \uline{ x_i \otimes x_k \otimes (x \circ x_j) \circ (y_i \circ y_k) \otimes y_j} \uwave{ - x_i \otimes x_k \otimes x_j \circ (x \circ (y_i \circ y_k)) \otimes y_j} \\
		&&\quad\quad + \uuline{ \uline{x_i \otimes x_k \otimes (x \circ y_j) \circ (y_i \circ y_k) \otimes x_j} \uwave{ - x_i \otimes x_k \otimes (x \circ (y_i \circ y_k) )\circ y_j \otimes x_j} }  \;\text{$\leftarrow$this line equal to 0}\\
		&&\quad\quad \uline{ - x_i \otimes x_k \otimes x_j \otimes x \circ (y_j \circ (y_i \circ y_k)) } + \uwave{ x_i \otimes x_k \otimes x_j \otimes y_j \circ (x \circ (y_i \circ y_k)) } \\
		&&\quad\quad \dashuline{ - y_k \otimes y_i \otimes y_j \otimes x \circ (x_j \circ (x_i \circ x_k)) + y_k \otimes y_i \otimes y_j \otimes x_j \circ (x \circ (x_i \circ x_k)) } \\
		&&\quad\quad + x_i \otimes y_i \circ (x \circ (x_k \circ x_j)) \otimes y_j \otimes y_k - x_i \otimes y_i \circ (x_j \circ (x \circ x_k)) \otimes y_j \otimes y_k \\
		&&\quad\quad + \dashuline{ y_k \otimes y_i \otimes y_j \otimes x_i \circ (x \circ (x_k \circ x_j)) - y_k \otimes y_i \otimes y_j \otimes x_i \circ (x_j \circ (x \circ x_k)) } \} \bigg) \\
		&&\underset{\eqref{CJBco:1}}{\overset{\eqref{ID:jord2}}{=}} \sum_{\sigma \in \Omega}\sigma\bigg( \sum_{i,j, k} \{ \uline{ x_i \otimes x_k \otimes y_j \otimes x \circ (x_j \circ (y_i \circ y_k)) } \uwave{ - x_i \otimes x_k \otimes y_j \otimes x_j \circ (x \circ (y_i \circ y_k)) } \\
		&&\quad\quad + \uuline{ x_i \otimes y_i \circ (x \circ (x_k \circ x_j)) \otimes y_j \otimes y_k - x_i \otimes y_i \circ (x_j \circ (x \circ x_k)) \otimes y_j \otimes y_k } \\
		&&\quad\quad \dashuline{ - y_k \otimes y_i \otimes y_j \otimes x_k \circ (x \circ (x_i \circ x_j)) + y_k \otimes y_i \otimes y_j \otimes x_k \circ (x_j \circ (x \circ x_i)) } \} \bigg) \\
		&&= \sum_{\sigma \in \Omega}\sigma\bigg( \sum_{i,j, k} \{ x_i \otimes x_k \otimes y_j \otimes x \circ (x_j \circ (y_i \circ y_k)) - x_i \otimes x_k \otimes y_j \otimes x_j \circ (x \circ (y_i \circ y_k)) \\
		&&\quad\quad \uuline{ - x_i \otimes x_j \circ (x \circ (x_k \circ y_i)) \otimes y_j \otimes y_k + x_i \otimes x \circ (x_j \circ (y_i \circ x_k)) \otimes y_j \otimes y_k }\\
		&&\quad\quad \uuline{ \uwave{ - x_i \otimes x_k \circ (x \circ (y_i \circ x_j)) \otimes y_j \otimes y_k} + \dashuline{ x_i \otimes x_k \circ (x_j \circ (x \circ y_i)) \otimes y_j \otimes y_k} }\\
		&&\quad\quad \uwave{ - y_k \otimes y_i \otimes y_j \otimes x_k \circ (x \circ (x_i \circ x_j))} + \dashuline{ y_k \otimes y_i \otimes y_j \otimes x_k \circ (x_j \circ (x \circ x_i))} \} \bigg) \\
		&&\overset{\eqref{CJBco:1}}{=}\sum_{\sigma \in \Omega}\sigma\bigg( \sum_{i,j, k} \{ - x_i \otimes x_j \circ (x \circ (x_k \circ y_i)) \otimes y_j \otimes y_k + x_i \otimes x \circ (x_j \circ (y_i \circ x_k)) \otimes y_j \otimes y_k \\
		&&\quad\quad + x_i \otimes x_k \otimes y_j \otimes x \circ (x_j \circ (y_i \circ y_k)) - x_i \otimes x_k \otimes y_j \otimes x_j \circ (x \circ (y_i \circ y_k)) \\
		&&\quad\quad \uwave{ - x_j \circ (x \circ (x_k \circ x_i)) \otimes y_i \otimes y_j \otimes y_k} + \dashuline{ x \circ (x_j \circ (x_k \circ x_i)) \otimes y_i \otimes y_j \otimes y_k} \\
		&&\quad\quad \uwave{ - x_j \circ (x \circ (x_k \circ y_i)) \otimes x_i \otimes y_j \otimes y_k} + \dashuline{ x \circ (x_j \circ (x_k \circ y_i)) \otimes x_i \otimes y_j \otimes y_k} \} \bigg) \\
		&&\overset{\hphantom{\eqref{eq:pr1}}}{=} \sum_{\sigma \in \Sigma} \sum_{i,j, k} \sigma\bigg( x_i \otimes x \circ (x_j \circ (y_i \circ x_k)) \otimes y_j \otimes y_k- x_i\otimes x_j \circ (x \circ (x_k \circ y_i)) \otimes y_j \otimes y_k\\
		&&\quad\quad + x_i \otimes x \circ (x_j \circ (y_i \circ y_k)) \otimes y_j \otimes x_k - x_i \otimes x_j \circ (x \circ (y_i \circ y_k)) \otimes y_j \otimes x_k \\
		&&\quad\quad + x \circ (x_j \circ (x_k \circ x_i)) \otimes y_i \otimes y_j \otimes y_k - x_j \circ (x \circ (x_k \circ x_i)) \otimes y_i \otimes y_j \otimes y_k \\
		&&\quad\quad + x \circ (x_j \circ (x_k \circ y_i)) \otimes x_i \otimes y_j \otimes y_k - x_j \circ (x \circ (x_k \circ y_i)) \otimes x_i \otimes y_j \otimes y_k \bigg) \\
		&&\overset{\hphantom{\eqref{eq:pr1}}}{=} : \mathfrak{S}.
	\end{eqnarray*}
	On the other hand, we have
	\begin{eqnarray*}
		&\mathfrak{S}&\overset{\eqref{ID:jord2}}{=}\sum_{\sigma \in \Omega}\sigma\bigg( \sum_{i,j, k} \{ \uwave{ x_i \otimes y_i \circ (x \circ (x_j \circ x_k)) \otimes y_j \otimes y_k} \dashuline{ - x_i \otimes y_i \circ (x_j \circ (x \circ x_k)) \otimes y_j \otimes y_k} \\
		&&\quad\quad + \uline{ x_i \otimes x_k \circ (x \circ (x_j \circ y_i)) \otimes y_j \otimes y_k} - x_i \otimes x_k \circ (x_j \circ (x \circ y_i)) \otimes y_j \otimes y_k \\
		&&\quad\quad - x_i \otimes y_i \circ (x_j \circ (x \circ y_k)) \otimes y_j \otimes x_k + \uline{ x_i \otimes y_i \circ (x \circ (x_j \circ y_k)) \otimes y_j \otimes x_k} \\
		&&\quad\quad - x_i \otimes y_k \circ (x_j \circ (x \circ y_i)) \otimes y_j \otimes x_k + \uline{ x_i \otimes y_k \circ (x \circ (x_j \circ y_i)) \otimes y_j \otimes x_k} \\
		&&\quad\quad + \uwave{ x_i \circ (x \circ (x_j \circ x_k)) \otimes y_i \otimes y_j \otimes y_k} \dashuline{ - x_i \circ (x_j \circ (x \circ x_k)) \otimes y_i \otimes y_j \otimes y_k} \\
		&&\quad\quad + \uwave{x_k \circ (x \circ (x_j \circ x_i)) \otimes y_i \otimes y_j \otimes y_k} \dashuline{ - x_k \circ (x_j \circ (x \circ x_i)) \otimes y_i \otimes y_j \otimes y_k} \\
		&&\quad\quad + \uline{ x_k \circ (x \circ (x_j \circ y_i)) \otimes x_i \otimes y_j \otimes y_k} - x_k \circ (x_j \circ (x \circ y_i)) \otimes x_i \otimes y_j \otimes y_k \\
		&&\quad\quad + \uwave{y_i \circ (x \circ (x_j \circ x_k)) \otimes x_i \otimes y_j \otimes y_k} \dashuline{ - y_i \circ (x_j \circ (x \circ x_k)) \otimes x_i \otimes y_j \otimes y_k} \} \bigg) \\
		&&\overset{\eqref{CJBco:1}}{=} 2\sum_{\sigma \in \Omega}\sigma\bigg( \sum_{i,j, k} \{ \uwave{ \textcolor{blue}{x_i \circ (x \circ (x_j \circ x_k)) \otimes y_i \otimes y_j \otimes y_k} } \dashuline{ \textcolor{green}{ - x_i \circ (x_j \circ (x \circ x_k)) \otimes y_i \otimes y_j \otimes y_k} } \\
		&&\quad\quad + \uwave{ \textcolor{red}{ y_i \circ (x \circ (x_j \circ x_k)) \otimes x_i \otimes y_j \otimes y_k} } \dashuline{ - y_i \circ (x_j \circ (x \circ x_k)) \otimes x_i \otimes y_j \otimes y_k} \\
		&&\quad\quad + \uline{ \textcolor{blue}{x_i \otimes x_k \circ (x \circ (x_j \circ y_i)) \otimes y_j \otimes y_k} } \textcolor{green}{ - x_i \otimes x_k \circ (x_j \circ (x \circ y_i)) \otimes y_j \otimes y_k} \\
		&&\quad\quad + \uline{ \textcolor{red}{x_i \otimes y_k \circ (x \circ (x_j \circ y_i)) \otimes y_j \otimes x_k} } - x_i \otimes y_k \circ (x_j \circ (x \circ y_i)) \otimes y_j \otimes x_k \} \bigg) \\
		&&\overset{\eqref{CJBco:1}}{=} -2\sum_{\sigma \in \Omega}\sigma\bigg( \sum_{i,j, k} \{ x \circ (x_j \circ (y_k \circ x_i)) \otimes y_i \otimes y_j \otimes x_k \textcolor{red}{ - x_j \circ (x \circ (x_i \circ y_k)) \otimes y_i \otimes y_j \otimes x_k} \\
		&&\quad\quad + x \circ (x_j \circ (y_i \circ y_k)) \otimes x_i \otimes y_j \otimes x_k-\textcolor{red}{ x_j \circ (x \circ (y_i \circ y_k)) \otimes x_i \otimes y_j \otimes x_k} \\
		&&\quad\quad + \textcolor{green}{ x \circ (x_j \circ (x_k \circ x_i)) \otimes y_i \otimes y_j \otimes y_k}-\textcolor{blue}{ x_j \circ (x \circ (x_k \circ x_i)) \otimes y_i \otimes y_j \otimes y_k} \\
		&&\quad\quad + \textcolor{green}{x \circ (x_j \circ (x_k \circ y_i)) \otimes x_i \otimes y_j \otimes y_k} -\textcolor{blue}{ x_j \circ (x \circ (x_k \circ y_i)) \otimes x_i \otimes y_j \otimes y_k} \} \bigg) \\
		&&\;\overset{ }{=}-2 \mathfrak{S}.
	\end{eqnarray*}
	Hence, we have $\mathfrak{S} = -2\mathfrak{S}$ and $\mathfrak{S} = 0$.
	Therefore, Eq.~\eqref{CJBco:2} holds if and only if Eq.~\eqref{eq:jcs} holds.

	\ref{it:hr2}. For all $x,y \in A$, we have
	\begin{eqnarray*}
		&&r_{13} \circ \bigl( (L_{\circ}(x) L_{\circ}(y) \otimes \id \otimes \id )(r_{12} + r_{21}) \bigr) - (L_{\circ}(x) \otimes \id \otimes \id) \bigl( r_{13} \circ [(\id \otimes \id \otimes L_{\circ}(y))(r_{23} + r_{32})] \bigr) \notag\\
		&&\quad - \bigl( L_{\circ}(y) \otimes \id \otimes \id \bigr) \bigl( r_{13} \circ [(\id \otimes \id \otimes L_{\circ}(x))(r_{23} + r_{32})] \bigr) \\
		&&\overset{\hphantom{\eqref{CJBco:1}}}{=} \sum_{i,j} \Big\{ \uline{ (x \circ (y \circ y_i)) \circ x_j \otimes x_i \otimes y_j + (x \circ (y \circ x_i)) \circ x_j \otimes y_i \otimes y_j } \\
		&&\quad \uwave{ - x \circ x_i \otimes y_j \otimes (y \circ x_j) \circ y_i - x \circ x_i \otimes x_j \otimes (y \circ y_j) \circ y_i } \\
		&&\quad \dashuline{ - y \circ x_i \otimes y_j \otimes (x \circ x_j) \circ y_i - y \circ x_i \otimes x_j \otimes (x \circ y_j) \circ y_i} \Big\} \\
		&&\overset{\eqref{CJBco:1}}{=} \sum_{i,j} \Big\{ \uline{ (x \circ y_i) \circ x_j \otimes y \circ x_i \otimes y_j + (x \circ x_i) \circ x_j \otimes y \circ y_i \otimes y_j } \\
		&&\quad \uwave{- x \circ x_i \otimes y \circ y_j \otimes x_j \circ y_i - x \circ x_i \otimes y \circ x_j \otimes y_j \circ y_i } \\
		&&\quad \dashuline{ - y \circ x_i \otimes x \circ y_j \otimes x_j \circ y_i - y \circ x_i \otimes x \circ x_j \otimes y_j \circ y_i } \Big\} \\
		&&\overset{\eqref{CJBco:1}}{=} \sum_{i,j} \Big\{ \uline{ y_i \circ x_j \otimes y \circ (x \circ x_i) \otimes y_j + x_i \circ x_j \otimes y \circ (x \circ y_i) \otimes y_j} \\
		&&\quad - x \circ x_i \otimes y \circ y_j \otimes x_j \circ y_i - x \circ x_i \otimes y \circ x_j \otimes y_j \circ y_i \\
		&&\quad - y \circ x_i \otimes x \circ y_j \otimes x_j \circ y_i - y \circ x_i \otimes x \circ x_j \otimes y_j \circ y_i \Big\} \\
		&&\overset{\hphantom{\eqref{CJBco:1}}}{=} (\id \otimes L_{\circ}(y)L_{\circ}(x) \otimes \id)( r_{21} \circ r_{13} + r_{12} \circ r_{13}) \\
		&&\quad -(L_{\circ}(x) \otimes L_{\circ}(y) \otimes \id + L_{\circ}(y)\otimes L_{\circ}(x) \otimes \id)(r_{13} \circ r_{32} + r_{13} \circ r_{23}).
	\end{eqnarray*}
	Hence, Eq.~\eqref{eq:hou3.6} holds if and only if Eq.~\eqref{eq:jbcs} holds.
\end{proof}

\begin{definition}
	Let $(A, \circ)$ be a Jordan algebra.
	An element $r \in A \otimes A$ is called \textbf{invariant} if
	\begin{equation*}
		( \id \otimes L_{\circ}(x) - L_{\circ}(x) \otimes \id )(r) = 0, \;\; \forall x \in A.
	\end{equation*}
\end{definition}

\begin{theorem}\label{CJBforQuasiandTri}
	Let $(A, \circ)$ be a Jordan algebra and $r \in A \otimes A$.
	Define a linear map $\Delta: A \to A \otimes A$ by Eq.~\eqref{eq:jcbd}.
	If $r$ is a solution of the JYBE and the symmetric part of $r$ is invariant,
	then $(A, \circ, \Delta)$ is a Jordan bialgebra, called a \textbf{quasi-triangular Jordan D-bialgebra}.
	In particular, if $r$ is a skew-symmetric solution of the JYBE, then $(A, \circ, \Delta)$ is a Jordan bialgebra, called a \textbf{triangular Jordan D-bialgebra}.
\end{theorem}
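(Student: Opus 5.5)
The plan is to use Proposition~\ref{prop:cobdppb}\ref{tdb:co2} together with the simplifications of Lemma~\ref{lem:sfh}. Write $r = a + s$ with $a = \frac{1}{2}(r - \tau(r))$ skew-symmetric and $s = \frac{1}{2}(r + \tau(r))$ symmetric. By Proposition~\ref{prop:cobdppb}\ref{tdb:co2}, $(A, \circ, \Delta)$ is a coboundary Jordan D-bialgebra exactly when Eqs.~\eqref{CJBco:1}, \eqref{CJBco:2} and \eqref{eq:hou3.6} hold. So it suffices to check these three conditions under the hypotheses that $\mathbf{J}(r) = 0$ and $s$ is invariant.

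The steps, in order, are as follows.

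First, Eq.~\eqref{CJBco:1} reads $(\id \otimes L_{\circ}(x) - L_{\circ}(x) \otimes \id)(r + \tau(r)) = 0$. Since $r + \tau(r) = 2s$, this is exactly the invariance of $s$, so it holds by hypothesis.

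Second, because Eq.~\eqref{CJBco:1} holds, Lemma~\ref{lem:sfh}\ref{it:hr1} applies. It shows that Eq.~\eqref{CJBco:2} is equivalent to Eq.~\eqref{eq:jcs}. Every term of Eq.~\eqref{eq:jcs} is obtained by applying a linear operator to $\mathbf{J}(r)$, possibly followed by tensoring with a fixed $y_i$ or $x_i$ and a permutation $\sigma_{34}$. The first term is $(\Delta \otimes \id \otimes \id)$ applied to an operator applied to $\mathbf{J}(r)$, which is still linear in $\mathbf{J}(r)$. Hence $\mathbf{J}(r) = 0$ makes the whole left side vanish, and Eq.~\eqref{CJBco:2} holds.

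Third, Lemma~\ref{lem:sfh}\ref{it:hr2}, again using Eq.~\eqref{CJBco:1}, reduces Eq.~\eqref{eq:hou3.6} to Eq.~\eqref{eq:jbcs}. That identity is a sum of linear operators applied to $\mathbf{J}(r)$, so it vanishes as well.

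Together these give that $(A, \circ, \Delta)$ is a Jordan D-bialgebra. For the triangular case, a skew-symmetric $r$ has $s = 0$, which is trivially invariant, so the first statement applies directly.

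I expect no genuine obstacle here. The heavy computations are already packaged in Lemma~\ref{lem:sfh}, where the terms not involving $\mathbf{J}(r)$ were shown to cancel using the invariance condition Eq.~\eqref{CJBco:1} and Lemma~\ref{lem:pr}. The only point needing care is checking that the first term of Eq.~\eqref{eq:jcs} is linear in $\mathbf{J}(r)$: the map $\Delta$ depends on $r$, but it acts only on the output of an operator applied to $\mathbf{J}(r)$.
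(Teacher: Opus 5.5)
Your proposal is correct and follows the paper's own argument. The paper simply invokes Proposition~\ref{prop:cobdppb} together with Lemma~\ref{lem:sfh}: Eq.~\eqref{CJBco:1} is the invariance of the symmetric part, and the reduced conditions \eqref{eq:jcs} and \eqref{eq:jbcs} are linear in $\mathbf{J}(r)$, so they vanish when $\mathbf{J}(r)=0$; you have only spelled out the details it leaves implicit, including the triangular case with zero symmetric part.
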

\begin{proof}
	It follows from Proposition~\ref{prop:cobdppb} and Lemma~\ref{lem:sfh}.
\end{proof}

From now on, a quasi-triangular Jordan D-bialgebra is denoted by $(A, \circ, \Delta_r)$.

\begin{corollary}\label{coro:trr}
	Let $(A, \circ)$ be a Jordan algebra and $r \in A \otimes A$.
	Then $r$ is a solution of the JYBE in $(A, \circ)$ if and only if $\tau(r)$ is. 
	Therefore, $(A, \circ, \Delta_r)$ is a quasi-triangular Jordan D-bialgebra if and only if $(A, \circ, \Delta_{\tau(r)})$ is.
\end{corollary}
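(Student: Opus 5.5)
The plan is to compare $\mathbf{J}(\tau(r))$ with $\mathbf{J}(r)$ by a direct computation in $A^{\otimes 3}$. The expected outcome is that $\mathbf{J}(\tau(r))$ equals $\mathbf{J}(r)$ with its tensor factors permuted. Since a permutation of tensor factors is a linear isomorphism of $A^{\otimes 3}$, this will show at once that $\mathbf{J}(r)=0$ if and only if $\mathbf{J}(\tau(r))=0$.

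Write $r=\sum_i x_i\otimes y_i$, so that $\tau(r)=\sum_i y_i\otimes x_i$. Then
\begin{align*}
\tau(r)_{12}\circ\tau(r)_{13} &= \sum_{i,j} (y_i\circ y_j)\otimes x_i\otimes x_j,\\
\tau(r)_{12}\circ\tau(r)_{23} &= \sum_{i,j} y_i\otimes (x_i\circ y_j)\otimes x_j,\\
\tau(r)_{13}\circ\tau(r)_{23} &= \sum_{i,j} y_i\otimes y_j\otimes (x_i\circ x_j).
\end{align*}
Let $\sigma_{13}$ exchange the first and third tensor factors. Applying $\sigma_{13}$ turns these three terms into
\[
\sum x_j\otimes x_i\otimes (y_i\circ y_j),\qquad \sum x_j\otimes (x_i\circ y_j)\otimes y_i,\qquad \sum (x_i\circ x_j)\otimes y_j\otimes y_i .
\]
Using commutativity of $\circ$ and relabelling the indices $i\leftrightarrow j$, these are exactly $r_{13}\circ r_{23}$, $r_{12}\circ r_{23}$ and $r_{12}\circ r_{13}$, respectively. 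Hence
\[
\sigma_{13}\big(\mathbf{J}(\tau(r))\big)=r_{13}\circ r_{23}-r_{12}\circ r_{23}+r_{12}\circ r_{13}=\mathbf{J}(r).
\]
The key point is that $\mathbf{J}$ has its middle term with sign $-$ and its outer two terms with sign $+$, so exchanging the outer factors only swaps the two outer terms. This gives the first claim.

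For the second claim, observe that $r$ and $\tau(r)$ have the same symmetric part $\tfrac12(r+\tau(r))$. So $r$ has invariant symmetric part exactly when $\tau(r)$ does. Combining this with the first claim, $r$ satisfies the hypotheses of Theorem~\ref{CJBforQuasiandTri} if and only if $\tau(r)$ does. By definition, this means $(A,\circ,\Delta_r)$ is quasi-triangular if and only if $(A,\circ,\Delta_{\tau(r)})$ is.

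The only step needing care is the index bookkeeping when matching the permuted terms with the terms of $\mathbf{J}(r)$. Everything else is immediate from commutativity and the definitions.
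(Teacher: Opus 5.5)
Your proposal is correct and follows essentially the same route as the paper, which also observes that $\mathbf{J}(\tau(r))=\sigma_{13}(\mathbf{J}(r))$ (equivalent to your identity, since $\sigma_{13}$ is an involution) and that $r$ and $\tau(r)$ have the same symmetric part. Your explicit index computation supplies the verification that the paper leaves to the reader.
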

\begin{proof} 
	Note that $\mathbf{J}( \tau(r)) = \sigma_{13}(\mathbf{J}(r))$, and that the symmetric part of $r$ and $\tau(r)$ coincide. 
	The desired conclusion then follows immediately.
\end{proof}

%%%%%%%%%%%%%%%%%%%%%%%%%%%%%%%%%%%%%%%%%%%%%%%%%%%%%%%%%%%%%%%%%%%%%%%%%%%%%%%%
\subsection{Operators forms of the Jordan Yang-Baxter equations}

Let $A$ be a vector space.
Any $r \in A \otimes A$ can be identified as maps $r_{+}, r_{-}: A^* \to A$, defined respectively by
\begin{equation*}
	\langle r_{+}(a^*), b^*\rangle = -\langle a^*, r_{-}(b^*)\rangle = \langle r, a^* \otimes b^*\rangle, \;\; \forall a^*, b^* \in A^*.
\end{equation*}
Clearly, $-r_{-} = r_{+}^* = (\tau(r))_{+}$.
Note that $r$ is called the \textbf{2-tensor form} of a linear map $\varphi: A^* \to A$ if $r_{+} = \varphi$.
Moreover, the multiplication on $A^*$ defined by Eqs.~\eqref{eq:jcbd} (as the dual) is given by
\begin{equation}
	a^* \cdot_r b^* = L_{\circ}^*(r_{+}(a^*)) b^* + L_{\circ}^*(r_{-}(b^*)) a^*, \;\; \forall a^*, b^* \in A^*. \label{eq:pcbdr}
\end{equation}
Writing $r$ as $r = \Lambda + \Theta$ with $\Lambda \in \mathrm{Alt}^2(A)$ and $\Theta \in \mathrm{Sym}^2(A)$, i.e.,
\begin{equation*}
	\Lambda = \frac{r - \tau(r)}{2}, \;\; \Theta = \frac{r + \tau(r)}{2}
\end{equation*}
Immediately, $\tau(r) = -\Lambda + \Theta$ and 
\begin{equation*}
	\Lambda_{+} = \Lambda_{-} = \frac{r_{+} + r_{-}}{2}, \;\; \Theta_{+} = -\Theta_{-} = \frac{r_{+} - r_{-}}{2}.
\end{equation*}
That is, $\Lambda_{+}$ and $\Theta_{+}$ are the symmetrizer and antisymmetrizer of $r_{\pm}$, respectively.

\begin{lemma}\label{lem:sinvb}
	Let $(A, \circ)$ be a Jordan algebra and $\Theta \in A \otimes A$ be symmetric.
	Then the following conditions are equivalent.
	\begin{enumerate}[label=(\roman*)]
		\item\label{pbinv:1}
		      $\Theta$ is invariant.

		\item\label{pbinv:2}
		      $\Theta_{+}:  A^* \to A$ is a homomorphism of representations from $(A^*; L_{\circ}^*)$ to $(A; L_{\circ})$, i.e.,
		      \begin{align}
			      \Theta_{+}(L_{\circ}^*(x) a^*) = x \circ \Theta_{+}(a^*), \;\; \forall x \in A, a^* \in A^*. \label{eq:adi1}
		      \end{align}

		\item\label{pbinv:3}
		      $\Theta_{+}: A^* \to A$ is balanced associated to $(A^*; L_{\circ}^*)$, i.e.,
		      \begin{equation}
			      L_{\circ}^*(\Theta_+(a^*))b^* = L_{\circ}^*(\Theta_+(b^*))a^*, \;\; \forall a^*,b^* \in A^*. \label{eq:sadi1}
		      \end{equation}
	\end{enumerate}
\end{lemma}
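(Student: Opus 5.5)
We translate each condition into a statement about the pairing with $\Theta$ and compare the resulting tensor identities. Write $\Theta = \sum_i x_i \otimes y_i$. Symmetry of $\Theta$ gives $\langle \Theta_+(a^*), b^*\rangle = \langle \Theta, a^*\otimes b^*\rangle = \langle \Theta_+(b^*), a^*\rangle$, so $\Theta_+^* = \Theta_+$. For any $x \in A$ and $a^*, b^* \in A^*$, pairing the invariance expression with $a^* \otimes b^*$ gives
\begin{equation*}
	\big\langle (\id \otimes L_{\circ}(x) - L_{\circ}(x) \otimes \id)(\Theta), a^* \otimes b^* \big\rangle
	= \langle \Theta_+(a^*), L_{\circ}^*(x) b^*\rangle - \langle \Theta_+(L_{\circ}^*(x)a^*), b^*\rangle .
\end{equation*}
The first term equals $\langle x \circ \Theta_+(a^*), b^*\rangle$. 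Hence the right-hand side is $\langle x\circ\Theta_+(a^*) - \Theta_+(L_\circ^*(x)a^*), b^*\rangle$. Since $b^*$ is arbitrary, invariance of $\Theta$ holds iff Eq.~\eqref{eq:adi1} holds, which proves \ref{pbinv:1} $\Longleftrightarrow$ \ref{pbinv:2}.

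For \ref{pbinv:2} $\Longleftrightarrow$ \ref{pbinv:3}, we pair both sides of Eq.~\eqref{eq:sadi1} with an arbitrary $x \in A$. On the left,
\begin{equation*}
	\langle L_\circ^*(\Theta_+(a^*))b^*, x\rangle = \langle b^*, \Theta_+(a^*)\circ x\rangle .
\end{equation*}
On the right,
\begin{equation*}
	\langle L_\circ^*(\Theta_+(b^*))a^*, x\rangle = \langle a^*, x\circ \Theta_+(b^*)\rangle .
\end{equation*}
Using \ref{pbinv:2} and then the symmetry $\Theta_+^*=\Theta_+$, the right-hand side becomes $\langle a^*, \Theta_+(L_\circ^*(x)b^*)\rangle = \langle \Theta_+(a^*), L_\circ^*(x)b^*\rangle = \langle x\circ\Theta_+(a^*), b^*\rangle$. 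By commutativity of $\circ$ this equals the left-hand side, so \ref{pbinv:2} implies \ref{pbinv:3}.

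For the converse, assume \ref{pbinv:3}. Reading the same chain backwards, the left-hand side equals $\langle \Theta_+(a^*), L_\circ^*(x) b^*\rangle$ by commutativity. The right-hand side equals $\langle \Theta_+(L_\circ^*(x)b^*), a^*\rangle$ provided we rewrite $\langle a^*, x\circ\Theta_+(b^*)\rangle$ using \ref{pbinv:2}. To avoid this circularity, we instead compare directly:
\begin{equation*}
	\langle a^*, x\circ\Theta_+(b^*)\rangle = \langle b^*, x \circ \Theta_+(a^*)\rangle \quad \text{for all } a^*, b^*, x .
\end{equation*}
This identity is exactly \ref{pbinv:3} after pairing with $x$. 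Its left-hand side is $\langle L_\circ^*(x)a^*, \Theta_+(b^*)\rangle = \langle \Theta_+(L_\circ^*(x)a^*), b^*\rangle$ by symmetry of $\Theta_+$. Since $b^*$ is arbitrary, we obtain $\Theta_+(L_\circ^*(x)a^*) = x\circ\Theta_+(a^*)$, which is \ref{pbinv:2}.

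The only point requiring care is the bookkeeping of duality conventions: the sign in $r_\pm$ and the definition of $L_\circ^*$. Symmetry of $\Theta$ is what makes $\Theta_+$ self-dual, and the whole argument rests on that.
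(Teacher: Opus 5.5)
Your proof is correct and is essentially the paper's argument. You get (i)$\Longleftrightarrow$(ii) by pairing $(\id\otimes L_{\circ}(x)-L_{\circ}(x)\otimes\id)(\Theta)$ against $a^*\otimes b^*$. You get (ii)$\Longleftrightarrow$(iii) by pairing with $x\in A$ and using $\Theta_{+}^{*}=\Theta_{+}$ together with commutativity of $\circ$.

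Your ``converse'' computation is exactly the single identity $\langle L_{\circ}^*(\Theta_+(a^*))b^* - L_{\circ}^*(\Theta_+(b^*))a^*, x\rangle = \langle b^*, x\circ\Theta_+(a^*) - \Theta_+(L_{\circ}^*(x)a^*)\rangle$. The paper uses this identity to obtain both directions at once, so your separate forward argument and the detour about circularity can be dropped. Note also that symmetry of $\Theta$ is needed only for the second equivalence, not the first.
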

\begin{proof}
	\ref{pbinv:1} $\Longleftrightarrow$ \ref{pbinv:2}: For all $x \in A$ and $a^*, b^* \in A^*$, we have
	\begin{align*}
		\langle \Theta_{+}(L_{\circ}^*(x) a^*) - x \circ \Theta_{+}(a^*), b^*\rangle & = \langle \Theta, L_{\circ}^*(x) a^* \otimes b^* - a^* \otimes L_{\circ}^*(x) b^* \rangle \\
		& = \langle (L_{\circ}(x) \otimes \id - \id \otimes L_{\circ}(x))\Theta, a^* \otimes b^* \rangle.
	\end{align*}
	That is, $\Theta$ is invariant if and only if Eq.~\eqref{eq:adi1} holds.

	\ref{pbinv:2} $\Longleftrightarrow $ \ref{pbinv:3}: For all $x \in A$ and $a^*, b^* \in A^*$, we have
	\begin{align*}
		\langle L_{\circ}^*(\Theta_{+}(a^*)) b^* - L_{\circ}^*(\Theta_{+}(b^*)) a^*, x \rangle & = \langle b^*, \Theta_{+}(a^*) \circ x \rangle - \langle a^*, \Theta_{+}(b^*) \circ x\rangle \\
		& = \langle b^*, x \circ \Theta_{+}(a^*) - \Theta_{+}(L_{\circ}^*(x)a^*) \rangle,
	\end{align*}
	which show that Eq.~\eqref{eq:adi1} holds if and only if Eq.~\eqref{eq:sadi1} holds.
\end{proof}

\begin{theorem}\label{thm:lreq}
	Let $(A, \circ)$ be a Jordan algebra and $r \in A \otimes A$.
	Writing $r$ as $r = \Lambda + \Theta$ with $\Lambda \in \mathrm{Alt}^2(A)$ and $\Theta \in \mathrm{Sym}^2(A)$.
	Suppose that $\Theta$ is invariant.
	Then the following conditions are equivalent.
	\begin{enumerate}[label=(\roman*)]
			\item\label{it:reqd1}
		      $r$ is a solution of the JYBE in $(A, \circ)$.
		\item\label{it:reqd2}
		      $\Lambda_{+}$ is an extended relative Rota-Baxter operator with extension $\Theta_{+}$ on $(A, \circ)$ associated to $(A^*; L_{\circ}^*)$.

		\item\label{it:reqd3}
		      $(A^*, \cdot_r)$ is a Jordan algebra, and $r_{+}$ (resp. $r_{-}$) is a homomorphism of Jordan algebras between $(A^*, \cdot_{r})$ and $(A, \circ)$, where $\cdot_{r}$ is defined by Eq.~\eqref{eq:pcbdr}.

		\item\label{it:reqd4}
		      $r_{+}$ (resp. $r_{-}$) is a relative Rota-Baxter operator of weight $1$ (resp. $-1$) on $(A, \circ)$ associated to the $A$-module Jordan algebra $((A^*, \cdot_{S}); L_{\circ}^*)$, where $\cdot_{S}$ is defined by
		      \begin{equation}
		      	a^* \cdot_{S} b^*  = - 2 L_{\circ}^*(\Theta_{+}(a^*)) b^*, \;\; a^*, b^* \in A^*. \label{eq:dmd}
		      \end{equation}

	\end{enumerate}
\end{theorem}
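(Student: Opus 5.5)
The plan is to prove \ref{it:reqd1} $\Longleftrightarrow$ \ref{it:reqd2} by a direct pairing computation. The remaining equivalences will then follow from Theorem~\ref{thm:opm} applied with $V = A^*$, $\rho = L_{\circ}^*$, $\pi_{\pm} = r_{\pm}$, $T = \Lambda_{+}$, $S = \Theta_{+}$.

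First I check the hypotheses of Theorem~\ref{thm:opm}. Since $\Theta$ is symmetric and invariant, Lemma~\ref{lem:sinvb} shows that $\Theta_{+}$ is a balanced homomorphism of representations from $(A^*; L_{\circ}^*)$ to $(A; L_{\circ})$. The symmetrizer of $r_{\pm}$ is $\Lambda_{+}$ and the antisymmetrizer is $\Theta_{+}$, as noted before Lemma~\ref{lem:sinvb}, and $r_{\pm} = \Lambda_{+} \pm \Theta_{+}$. Next I compare products. From Eq.~\eqref{eq:pcbdr},
\begin{equation*}
a^* \cdot_r b^* = L_{\circ}^*(\Lambda_+(a^*))b^* + L_{\circ}^*(\Lambda_+(b^*))a^* + L_{\circ}^*(\Theta_+(a^*))b^* - L_{\circ}^*(\Theta_+(b^*))a^*.
\end{equation*}
The last two terms cancel by Eq.~\eqref{eq:sadi1}, so $\cdot_r$ coincides with $\cdot_{T}$ of Eq.~\eqref{eq:o2nj} for $T = \Lambda_+$. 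Likewise $\cdot_S$ in Eq.~\eqref{eq:dmd} is exactly the product of Proposition~\ref{prop:bae2aj}. With these identifications, Theorem~\ref{thm:opm} gives \ref{it:reqd2} $\Longleftrightarrow$ \ref{it:reqd3} $\Longleftrightarrow$ \ref{it:reqd4}.

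The main step is \ref{it:reqd1} $\Longleftrightarrow$ \ref{it:reqd2}. By Lemma~\ref{lem:iphr}, \ref{it:reqd2} is equivalent to
\begin{equation*}
r_{+}(a^*) \circ r_{+}(b^*) = r_{+}\big(L_{\circ}^*(\Lambda_+(a^*))b^* + L_{\circ}^*(\Lambda_+(b^*))a^*\big) \quad \text{for all } a^*, b^* \in A^*.
\end{equation*}
By the cancellation above, the right-hand side equals $r_+(a^* \cdot_r b^*)$, that is,
\begin{equation*}
r_{+}\big(L_{\circ}^*(r_{+}(a^*))b^* + L_{\circ}^*(r_{-}(b^*))a^*\big).
\end{equation*}
So it suffices to show
\begin{equation*}
\langle \mathbf{J}(r), a^* \otimes b^* \otimes c^* \rangle = \pm \big\langle r_{+}(a^*) \circ r_{+}(b^*) - r_{+}(a^* \cdot_r b^*),\, c^* \big\rangle,
\end{equation*}
up to a permutation of $a^*, b^*, c^*$. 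This is a routine expansion with $r = \sum_i x_i \otimes y_i$, using $r_+(a^*) = \sum_i \langle a^*, x_i\rangle y_i$ and $r_-(b^*) = -\sum_i \langle b^*, y_i\rangle x_i$. Each of the three terms $r_{12}\circ r_{13}$, $r_{12}\circ r_{23}$, $r_{13}\circ r_{23}$ pairs to one of the three terms on the right, possibly after using the commutativity of $\circ$.

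I expect the obstacle to be the bookkeeping of signs and slot placement. With the convention $\langle r_+(a^*), b^*\rangle = \langle r, a^*\otimes b^*\rangle$, the product $r_+(a^*)\circ r_+(b^*)$ naturally arises from $r_{13}\circ r_{23}$ paired in slots $(1,2,3)$. The mixed terms should come from $r_{12}\circ r_{13}$ and $-r_{12}\circ r_{23}$, and these need the identification $-r_- = r_+^*$. If the slots come out permuted, I will use Corollary~\ref{coro:trr}, via $\mathbf{J}(\tau r) = \sigma_{13}\mathbf{J}(r)$, or pair against a permuted triple to realign them. Invariance of $\Theta$ should not be needed in this identity, since it enters only through Lemma~\ref{lem:iphr} and the rewriting of $\cdot_r$.
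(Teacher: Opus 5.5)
Your proposal is correct and follows essentially the same route as the paper. The paper also proves (i)$\Leftrightarrow$(ii) by splitting $r_{+}(a^*)\circ r_{+}(b^*) - r_{+}\big(L_{\circ}^*(r_{+}(a^*))b^* + L_{\circ}^*(r_{-}(b^*))a^*\big)$ into $\Lambda_{+}$ and $\Theta_{+}$ parts and pairing with $c^*$; this is the computation of Lemma~\ref{lem:iphr} done inline, plus the balanced cancellation. It then obtains (ii)$\Leftrightarrow$(iii)$\Leftrightarrow$(iv) from Theorem~\ref{thm:opm} after identifying $\cdot_r$ with $\cdot_{\Lambda_{+}}$.

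Your hedging on signs and slot permutations is unnecessary. With $r_{+}(a^*)=\sum_i\langle a^*,x_i\rangle y_i$ and $r_{-}(b^*)=-\sum_i\langle b^*,y_i\rangle x_i$, the pairing holds on the nose: $\langle r_{+}(a^*)\circ r_{+}(b^*) - r_{+}(a^*\cdot_r b^*), c^*\rangle = \langle \mathbf{J}(r), a^*\otimes b^*\otimes c^*\rangle$. The three terms match $r_{13}\circ r_{23}$, $-r_{12}\circ r_{23}$ and $r_{12}\circ r_{13}$ respectively, so no appeal to Corollary~\ref{coro:trr} is needed.
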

\begin{proof}\ref{it:reqd1} $\Longleftrightarrow$ \ref{it:reqd2}: 	Let $a^*, b^*, c^* \in A^*$. 
	Note that $\Theta_{+}: A^* \to A$ is a balanced homomorphism of representations from $(A^*; L_{\circ}^*)$ to $(A; L_{\circ})$, we have
	\begin{align*}
		 & r_{+}(a^{*}) \circ r_{+}(b^{*}) - r_{+}(L_{\circ}^*(r_{+}(a^{*})) b^{*}+L_{\circ}^{*}(r_{-}(b^{*})) a^{*})                                                                                                    \\
		 \overset{\hphantom{\eqref{eq:adi1}}}{=}\; & (\Lambda_{+}+\Theta_{+})(a^{*}) \circ (\Lambda_{+}+\Theta_{+})(b^{*}) - (\Lambda_{+}+\Theta_{+})(L_{\circ}^{*}((\Lambda_{+}+\Theta_{+})(a^{*})) b^{*}+L_{\circ}^{*}((\Lambda_{+}-\Theta_{+})(b^{*})) a^{*}) \\
		 \overset{\eqref{eq:sadi1}}{=}\; &  (\Lambda_{+}+\Theta_{+})(a^{*}) \circ (\Lambda_{+}+\Theta_{+})(b^{*}) - (\Lambda_{+}+\Theta_{+})(L_{\circ}^{*}(\Lambda_{+}(a^{*})) b^{*}+L_{\circ}^{*}(\Lambda_{+}(b^{*})) a^{*})                           \\
		 \overset{\hphantom{\eqref{eq:adi1}}}{=}\; & \Lambda_{+}(a^*) \circ \Lambda_{+}(b^*) - \Lambda_{+}\big(L_{\circ}^*(\Lambda_{+}(a^*))b^* + L_{\circ}^*(\Lambda_{+}(b^*))a^*\big) + \Theta_{+}(a^*) \circ \Theta_{+}(b^*)                                  \\
		 & + \Theta_{+}(a^*) \circ \Lambda_{+}(b^*) + \Lambda_{+}(a^*) \circ \Theta_{+}(b^*) - \Theta_{+}(L_{\circ}^{*}(\Lambda_{+}(x^{*})) y^{*}+L_{\circ}^{*}(\Lambda_{+}(y^{*})) x^{*})                         \\
		 \overset{\eqref{eq:adi1}}{=}\;&  \Lambda_{+}(a^*) \circ \Lambda_{+}(b^*) - \Lambda_{+}\big(L_{\circ}^*(\Lambda_{+}(a^*))b^* + L_{\circ}^*(\Lambda_{+}(b^*))a^*\big) + \Theta_{+}(a^*) \circ \Theta_{+}(b^*).
	\end{align*}
	Therefore,
	\begin{align*}
		& \langle \Lambda_{+}(a^*) \circ \Lambda_{+}(b^*) - \Lambda_{+}\big(L_{\circ}^*(\Lambda_{+}(a^*))b^* + L_{\circ}^*(\Lambda_{+}(b^*))a^*\big) + \Theta_{+}(a^*) \circ \Theta_{+}(b^*), c^* \rangle \\
		=\; & \langle r_{+}(x^{*}) \circ r_{+}(y^{*}) - r_{+}(L_{\circ}^{*}(r_{+}(x^{*})) y^{*}+L_{\circ}^{*}(r_{-}(y^{*})) x^{*}) , c^*\rangle \\
		=\; & \langle r_{12} \circ r_{13} - r_{12} \circ r_{23} + r_{13} \circ r_{23}, a^* \otimes b^* \otimes c^*\rangle.
	\end{align*}
	Hence, $r$ is a solution of the JYBE in $(A, \circ)$ if and only if $\Lambda_{+}: A^* \to A$ is an extended relative Rota-Baxter operator with extension $\Theta_{+}$ on $(A, \circ)$ associated to $(A^*; L_{\circ}^*)$.

\ref{it:reqd2} $\Longleftrightarrow$ \ref{it:reqd3} $\Longleftrightarrow$ \ref{it:reqd4}:  By Lemma~\ref{lem:sinvb}, $\Theta_{+}: A^* \to A$ is a balanced homomorphism of representations from $(A^*; L_{\circ}^*)$ to $(A; L_{\circ})$.
	Moreover, for all $a^*, b^* \in A^*$, we have
	\begin{align*}
		& L_{\circ}^*(\Lambda_{+}(a^*))b^* + L_{\circ}^*(\Lambda_{+}(b^*))a^* = L_{\circ}^*((r_{+}-\Theta_{+})(a^*))b^* + L_{\circ}^*((r_{-}+\Theta_{+})(b^*))a^* \\
		\overset{\eqref{eq:sadi1}}{=}\; & L_{\circ}^*(r_{+}(a^*)) b^* + L_{\circ}^*(r_{-}(b^*)) a^* = a^* \cdot_{r} b^*.
	\end{align*}
	The desired equivalence now follows immediately from Theorem~\ref{thm:opm}.
\end{proof}

\begin{remark}
	In particular, if $r$ is skew-symmetric, i.e., $\Theta=0$, then $r$ is a solution of the JYBE in $(A, \circ)$ if and only if $r_{+}$ is a relative Rota-Baxter operator of weight 0 on $(A, \circ)$ associated to $(A^*; L_{\circ}^*)$, as shown in~\cite[Proposition~3.1]{hou2013pre} in terms of $\mathcal{O}$-operators.
\end{remark}

%%%%%%%%%%%%%%%%%%%%%%%%%%%%%%%%%%%%%%%%%%%%%%%%%%%%%%%%%%%%%%%%%%%%%%%%%%%%%%%%
\subsection{Extended relative Rota-Baxter operators on quadratic Jordan algebras}

\begin{definition}
	A \textbf{quadratic Jordan algebra} is a triple $(A, \circ, \mathcal{B})$, where $(A, \circ)$ is a Jordan algebra and $\mathcal{B}$ is a nondegenerate symmetric bilinear form on $A$, which is \textbf{invariant} in the sense:
	\begin{equation*}
		\mathcal{B}(x \circ y, z)=\mathcal{B}(x, y \circ z), \;\; \forall x, y, z \in A.
	\end{equation*}
\end{definition}

\begin{proposition}\label{GouzaoQJA}
	Let $(A, \circ)$ be a Jordan algebra, and $(A \ltimes_{L_{\circ}^{*}} A^{*}, \bullet)$ be the semi-direct product of $(A, \circ)$ and $(A^*; L_{\circ}^*)$.
	Then $(A \ltimes_{L_{\circ}^{*}} A^{*}, \bullet, \mathcal{B}_d )$ is a quadratic Jordan algebra, where $\mathcal{B}_d$ is defined by
	\begin{equation}
		\mathcal{B}_{d}(x+a^{*}, y+b^{*})= \langle a^{*}, y \rangle + \langle b^{*}, x \rangle, \;\; \forall x, y \in A, \; a^{*}, b^{*} \in A^{*}. \label{eq:abm}
	\end{equation}
\end{proposition}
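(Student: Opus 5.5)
We must check three things about $\mathcal{B}_d$: it is a symmetric bilinear form, it is nondegenerate, and it is invariant on $(A \ltimes_{L_{\circ}^{*}} A^{*}, \bullet)$. That $(A \ltimes_{L_{\circ}^*} A^*, \bullet)$ is a Jordan algebra is already given by the semi-direct product proposition. Here $(A^*; L_\circ^*)$ is viewed as an $A$-module Jordan algebra with trivial product, and the coregular representation is a representation.

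Symmetry is immediate from the formula~\eqref{eq:abm}. For nondegeneracy, suppose $\mathcal{B}_d(x+a^*, y+b^*)=0$ for all $y, b^*$. Taking $b^*=0$ gives $\langle a^*, y\rangle = 0$ for all $y$, so $a^*=0$. Taking $y=0$ gives $\langle b^*, x\rangle=0$ for all $b^*$, so $x=0$. Finite-dimensionality is not even needed for this step.

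The main step is invariance. Since $\mathcal{B}_d$ is symmetric and $\bullet$ is commutative, it suffices to verify
\begin{equation*}
	\mathcal{B}_d\big((x+a^*)\bullet(y+b^*), z+c^*\big) = \mathcal{B}_d\big(x+a^*, (y+b^*)\bullet(z+c^*)\big).
\end{equation*}
We expand with $(x+a^*)\bullet(y+b^*) = x\circ y + L_\circ^*(x)b^* + L_\circ^*(y)a^*$. The left side becomes
\begin{equation*}
	\langle L_\circ^*(x)b^* + L_\circ^*(y)a^*, z\rangle + \langle c^*, x\circ y\rangle = \langle b^*, x\circ z\rangle + \langle a^*, y\circ z\rangle + \langle c^*, x\circ y\rangle.
\end{equation*}
The right side becomes
\begin{equation*}
	\langle a^*, y\circ z\rangle + \langle L_\circ^*(y)c^* + L_\circ^*(z)b^*, x\rangle = \langle a^*, y\circ z\rangle + \langle c^*, y\circ x\rangle + \langle b^*, z\circ x\rangle.
\end{equation*}
These agree by commutativity of $\circ$. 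Both computations use convention (iv) in the form $\langle L_\circ^*(x)b^*, z\rangle = \langle b^*, x\circ z\rangle$.

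There is no real obstacle. The only point needing care is the sign convention for $L_\circ^*$: the paper defines it as the plain transpose, with no minus sign, and the computation above uses exactly that.
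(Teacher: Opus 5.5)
Your proposal is correct and follows the paper's proof: expand both sides of the invariance identity using $\langle L_\circ^*(x)b^*, z\rangle = \langle b^*, x\circ z\rangle$, match them via commutativity of $\circ$, and note that $\mathcal{B}_d$ is symmetric and nondegenerate. Your explicit nondegeneracy argument and your remark that the semi-direct product is a Jordan algebra simply fill in steps the paper states without detail.
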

\begin{proof}
	For all $x, y,z \in A$ and $a^{*}, b^{*},c^{*}\in A^{*}$, we have
	\begin{align*}
		 & \mathcal{B}_{d}((x+a^{*}) \bullet (y+b^{*}),z + c^{*}) = \mathcal{B}_{d}(x \circ y + L_{\circ}^{*}(x) b^{*}+ L_{\circ}^{*}(y) a^{*},z + c^{*})                                                                                   \\
		 & =\langle c^{*}, x \circ y \rangle + \langle L_{\circ}^{*}(x) b^{*}+ L_{\circ}^{*}(y) a^{*}, z \rangle  =\langle L_{\circ}^{*}(y)c^{*}, x \rangle + \langle L_{\circ}^{*}(z)b^{*} , x \rangle + \langle a^{*}, z \circ y \rangle \\
		 & =\mathcal{B}_{d}( x+a^{*}, (y+b^{*})\bullet (z + c^{*})).
	\end{align*}
	Note that $\mathcal{B}_{d}$ is nondegenerate and symmetric.
	Hence, $(A \ltimes_{L_{\circ}^{*}} A^{*}, \bullet, \mathcal{B}_d )$ is a quadratic Jordan algebra.
	The proof is complete.
\end{proof}

\begin{example}\label{ex:Jordan3}
	Let $(A, \circ)$ be the 2-dimensional Jordan algebra in Example~\ref{ex:JordanXY} (a) with the basis $\{e_{1}, e_{2}\}$. 
	Let $\{e_1^*, e_2^*\}$ be the dual basis of $\{e_1, e_2\}$. 
	Then there is a 4-dimensional Jordan $(A \ltimes_{L_{\circ}^{*}} A^*, \bullet)$, whose nonzero product of $\bullet$ is explicitly given by 
	\begin{equation*}
		e_1 \bullet e_1 = e_1, \quad
		e_1 \bullet e_2 = \frac{1}{2} e_2 = e_2 \bullet e_1, \quad
		e_1 \bullet e_1^* =e_1^* = e_1^* \bullet e_1, \quad
		e_1 \bullet e_2^* =\frac{1}{2}e_2^* = e_2^* \bullet e_1.
	\end{equation*}
	By Proposition~\ref{GouzaoQJA}, $(A \ltimes_{L_{\circ}^{*}} A^{*}, \bullet, \mathcal{B}_d )$ is a quadratic Jordan algebra, where $\mathcal{B}_d$ is given by
	\begin{align*}
		\mathcal{B}_d(e_i, e_j) = \mathcal{B}_d(e_i^*, e_j^*) = 0, \quad \mathcal{B}_d(e_i^*, e_j) = \mathcal{B}_d(e_j, e_i^*) = \delta_{ij}, \quad i, j=1, 2.
	\end{align*}
\end{example}

Let $A$ be a vector space equipped with a nondegenerate bilinear form $\mathcal{B}$.
Denote by $I_{\mathcal{B}}: A^* \to A$ the induced linear isomorphism defined by
\begin{equation*}
	\langle I_{\mathcal{B}}^{-1}(x), y\rangle := \mathcal{B}(x, y), \quad \forall x,y \in A.
\end{equation*}

\begin{proposition}\label{dualdj:x}
	Let $(A, \circ)$ be a Jordan algebra.
	If there is a nondegenerate symmetric invariant bilinear form $\mathcal{B}$ such that $(A, \circ,\mathcal{B})$ is a quadratic Jordan algebra, then $(A; L_{\circ})$ and $(A^*; L_{\circ}^*)$ are equivalent representations of $(A, \circ)$. Conversely, if $(A; L_{\circ})$ and $(A^*; L_{\circ}^*)$ are equivalent representations of $(A, \circ)$, then there exists a nondegenerate invariant bilinear form $\mathcal{B}$ on $A$.
\end{proposition}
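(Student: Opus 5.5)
The plan is to use the map $I_{\mathcal{B}}: A^* \to A$ itself as the intertwiner in the forward direction, and, for the converse, to build $\mathcal{B}$ directly from a given isomorphism of representations.

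\emph{Forward direction.} Assume $(A, \circ, \mathcal{B})$ is quadratic. Since $\mathcal{B}$ is nondegenerate, $I_{\mathcal{B}}: A^* \to A$ is a linear isomorphism. I will verify that $I_{\mathcal{B}}^{-1}: A \to A^*$ is a homomorphism of representations, i.e.
\begin{equation*}
	I_{\mathcal{B}}^{-1}(x \circ y) = L_{\circ}^*(x)\, I_{\mathcal{B}}^{-1}(y), \;\; \forall x, y \in A.
\end{equation*}
Pair both sides with an arbitrary $z \in A$. The left side gives $\langle I_{\mathcal{B}}^{-1}(x\circ y), z\rangle = \mathcal{B}(x \circ y, z)$. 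The right side gives $\langle I_{\mathcal{B}}^{-1}(y), x \circ z\rangle = \mathcal{B}(y, x \circ z)$. These agree by commutativity of $\circ$, invariance and symmetry of $\mathcal{B}$:
\begin{equation*}
	\mathcal{B}(x \circ y, z) = \mathcal{B}(y \circ x, z) = \mathcal{B}(y, x \circ z).
\end{equation*}
Hence $I_{\mathcal{B}}^{-1}$ is an isomorphism of representations from $(A; L_{\circ})$ to $(A^*; L_{\circ}^*)$.

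\emph{Converse.} Suppose $\varphi: A \to A^*$ is an isomorphism of representations, so $\varphi(x \circ y) = L_{\circ}^*(x)\varphi(y)$. Define
\begin{equation*}
	\mathcal{B}(x, y) := \langle \varphi(x), y\rangle.
\end{equation*}
Nondegeneracy follows because $\varphi$ is bijective. For invariance, I will compute
\begin{align*}
	\mathcal{B}(x \circ y, z) &= \langle \varphi(y \circ x), z\rangle = \langle L_{\circ}^*(y)\varphi(x), z\rangle \\
	&= \langle \varphi(x), y \circ z\rangle = \mathcal{B}(x, y \circ z),
\end{align*}
where the first step uses commutativity of $\circ$ and the second uses the intertwining property of $\varphi$.

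The only point requiring care is that the converse statement does not claim $\mathcal{B}$ is symmetric, and indeed symmetry need not hold for an arbitrary $\varphi$. So I will simply not assert it, and the argument is complete without it. If symmetry were wanted, one could try replacing $\mathcal{B}$ by its symmetrization, but that might become degenerate, which is presumably why the statement omits symmetry. Beyond this, I expect no real obstacle: both directions are one-line pairing computations.
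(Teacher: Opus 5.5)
Your proof is correct and matches the paper's: forward, you use $I_{\mathcal{B}}^{-1}$ as the intertwiner with the same pairing computation; conversely, you set $\mathcal{B}(x,y)=\langle\varphi(x),y\rangle$ exactly as the paper does and write out the invariance check it leaves to ``a similar argument.'' You are also right not to claim symmetry in the converse, since it can fail (e.g.\ for the trivial Jordan algebra every nondegenerate bilinear form, symmetric or not, is invariant).
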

\begin{proof}
	For all $x, y, z \in A$, we have
	\begin{equation*}
		\langle I_\mathcal{B}^{-1}(L_{\circ}(x)y) , z\rangle = \mathcal{B}(x \circ y, z) = \mathcal{B}(y,x \circ z) = \langle I_\mathcal{B}^{-1}(y), x \circ z \rangle = \langle L^*_{\circ}(x)I_\mathcal{B}^{-1}(y), z\rangle,
	\end{equation*}
	Thus, $(A; L_{\circ})$ and $(A^*; L_{\circ}^*)$ are equivalent representations.
	
	Conversely, suppose $\varphi: A \to A^*$ is the linear isomorphism giving the equivalence between $(A; L_{\circ})$ and $(A^*; L_{\circ}^*)$.
	Define a bilinear form $\mathcal{B}$ on $A$ by
	\begin{equation*}
		\mathcal{B}(x, y) = \langle \varphi(x), y\rangle.
	\end{equation*}
	Then, by a similar argument as above, $\mathcal{B}$ is a nondegenerate invariant bilinear form on $A$.
\end{proof}

\begin{definition}
	Let $A$ be a vector space and $\mathcal{B}$ be a nondegenerate bilinear form on $A$.
	A linear map $T: A \to A$ is called \textbf{self-adjoint} (resp. \textbf{skew-adjoint}) with respect to $\mathcal{B}$ if for all $x, y \in A$
	\begin{equation*}
		\mathcal{B}(T(x), y) = \mathcal{B}(x, T(y)) \quad \text{(resp. $\mathcal{B}(T(x), y) = -\mathcal{B}(x, T(y))$)}.
	\end{equation*}
\end{definition}

\begin{lemma}\label{lem:sa2s}
	Let $(A, \circ, \mathcal{B})$ be a quadratic Jordan algebra and $I_\mathcal{B}$ be the induced linear isomorphism by $\mathcal{B}$. Let $T: A \to A$ be a linear map.
	Then $T$ is self-adjoint (resp. skew-adjoint) with respect to $\mathcal{B}$ if and only if the 2-tensor form of $T I_\mathcal{B}: A^* \to A$ is symmetric (resp. skew-symmetric).
\end{lemma}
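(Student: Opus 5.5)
The plan is to compute the 2-tensor form of $T I_\mathcal{B}$ directly and compare it with its flip. Recall the paper's convention: $r$ is the 2-tensor form of $\varphi: A^* \to A$ when $r_{+} = \varphi$, that is,
\begin{equation*}
\langle r, a^* \otimes b^* \rangle = \langle \varphi(a^*), b^* \rangle, \;\; \forall a^*, b^* \in A^*.
\end{equation*}
Also, $\tau(r)$ is the 2-tensor form of $r_{+}^*$. So $r$ is symmetric (resp. skew-symmetric) if and only if
\begin{equation*}
\langle \varphi(a^*), b^*\rangle = \langle \varphi(b^*), a^*\rangle \quad (\text{resp. } = -\langle \varphi(b^*), a^*\rangle)
\end{equation*}
for all $a^*, b^* \in A^*$.

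Since $I_\mathcal{B}$ is a linear isomorphism, I will write $a^* = I_\mathcal{B}^{-1}(x)$ and $b^* = I_\mathcal{B}^{-1}(y)$ with $x, y \in A$ arbitrary. By the defining relation $\langle I_\mathcal{B}^{-1}(x), y\rangle = \mathcal{B}(x,y)$ and the symmetry of $\mathcal{B}$,
\begin{equation*}
\langle T I_\mathcal{B}(a^*), b^*\rangle = \langle I_\mathcal{B}^{-1}(y), T(x)\rangle = \mathcal{B}(y, T(x)) = \mathcal{B}(T(x), y).
\end{equation*}
In the same way, $\langle T I_\mathcal{B}(b^*), a^*\rangle = \mathcal{B}(T(y), x) = \mathcal{B}(x, T(y))$.

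Hence the symmetry (resp. skew-symmetry) condition on the 2-tensor form, required for all $a^*, b^* \in A^*$, is equivalent to
\begin{equation*}
\mathcal{B}(T(x), y) = \mathcal{B}(x, T(y)) \quad (\text{resp. } \mathcal{B}(T(x), y) = -\mathcal{B}(x, T(y)))
\end{equation*}
for all $x, y \in A$. This is exactly self-adjointness (resp. skew-adjointness) of $T$ with respect to $\mathcal{B}$, and bijectivity of $I_\mathcal{B}$ gives both directions.

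No real obstacle is expected. The only care needed is the bookkeeping of which slot of the pairing and of $\mathcal{B}$ is used; symmetry of $\mathcal{B}$ makes the order irrelevant. Invariance of $\mathcal{B}$ and the Jordan structure are not needed.
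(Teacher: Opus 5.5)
Your proposal is correct and follows essentially the same route as the paper: the paper computes $\langle r-\tau(r), a^*\otimes b^*\rangle$ as $\mathcal{B}(I_\mathcal{B}(b^*), T I_\mathcal{B}(a^*)) - \mathcal{B}(T I_\mathcal{B}(b^*), I_\mathcal{B}(a^*))$. Like you, it uses only the defining relation of $I_\mathcal{B}$, the symmetry of $\mathcal{B}$, and the bijectivity of $I_\mathcal{B}$.
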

\begin{proof}
	It suffices to prove the self-adjoint case, as the skew-adjoint case follows by the same argument.
	Let $r$ be the 2-tensor form of $T I_{\mathcal{B}}$, i.e., $r_{+} = T I_{\mathcal{B}}$.
	Then, for all $a^*, b^* \in A^*$, we have
	\begin{align*}
		& \langle r - \tau(r), a^* \otimes b^* \rangle = \langle b^*, r_{+}(a^*) \rangle - \langle a^*, r_{+}(b^*) \rangle = \mathcal{B}(I_{\mathcal{B}}(b^*), r_{+}(a^*)) - \mathcal{B}(I_{\mathcal{B}}(a^*), r_{+}(b^*)) \\
		=\; & \mathcal{B}(I_{\mathcal{B}}(b^*), T I_{\mathcal{B}}(a^*)) - \mathcal{B}(I_{\mathcal{B}}(a^*), T I_{\mathcal{B}}(b^*)) = \mathcal{B}(I_{\mathcal{B}}(b^*), T I_{\mathcal{B}}(a^*)) - \mathcal{B}(T I_{\mathcal{B}}(b^*), I_{\mathcal{B}}(a^*)).
	\end{align*}
	Hence, $r$ is symmetric if and only if $T$ is self-adjoint.
\end{proof}

\begin{lemma}\label{lem:eq4q}
	Let $(A, \circ, \mathcal{B})$ be a quadratic Jordan algebra and $I_\mathcal{B}$ be the induced linear isomorphism by $\mathcal{B}$.
	Suppose that $S: A \to A$ is a linear map that is self-adjoint with respect to $\mathcal{B}$.
	Then the following conditions are equivalent.
 \begin{enumerate}[label=(\roman*)]
		\item\label{it:eq41}
		      $S$ is balanced associated to $(A; L_{\circ})$.

		\item\label{it:eq42}
		      $S$ is a homomorphism of representations from $(A; L_{\circ})$ to $(A; L_{\circ})$.

		\item\label{it:eq43}
		      $P_S = S I_\mathcal{B}: A^* \to A$ is balanced associated to $(A^*; L_{\circ}^*)$.

		\item\label{it:eq44}
		      $P_S = S I_\mathcal{B}: A^* \to A$ is a homomorphism of representations from $(A^*; L_{\circ}^*)$ to $(A; L_{\circ})$.
	\end{enumerate}
\end{lemma}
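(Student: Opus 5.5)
We will prove the four conditions equivalent in two groups, \ref{it:eq41} $\Leftrightarrow$ \ref{it:eq42} and \ref{it:eq43} $\Leftrightarrow$ \ref{it:eq44}, and then link them by \ref{it:eq42} $\Leftrightarrow$ \ref{it:eq44} using $I_\mathcal{B}$. The basic tool is the identity from the proof of Proposition~\ref{dualdj:x}: $I_\mathcal{B}^{-1} L_\circ(x) = L_\circ^*(x) I_\mathcal{B}^{-1}$, or equivalently $I_\mathcal{B} L_\circ^*(x) = L_\circ(x) I_\mathcal{B}$ for all $x \in A$. This says that $I_\mathcal{B}$ is an isomorphism of representations from $(A^*; L_\circ^*)$ to $(A; L_\circ)$. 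It uses only the invariance of $\mathcal{B}$.

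\textbf{Step 1: \ref{it:eq42} $\Leftrightarrow$ \ref{it:eq44}.} We have
\begin{equation*}
P_S(L_\circ^*(x)a^*) = S I_\mathcal{B} L_\circ^*(x) a^* = S(x \circ I_\mathcal{B}(a^*)),
\end{equation*}
while $x \circ P_S(a^*) = x \circ S(I_\mathcal{B}(a^*))$. Since $I_\mathcal{B}$ is bijective, \eqref{eq:adi1} for $P_S$ is the same as $S(x \circ y) = x \circ S(y)$ for all $x, y \in A$.

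\textbf{Step 2: \ref{it:eq43} $\Leftrightarrow$ \ref{it:eq44}.} Let $\Theta$ be the 2-tensor form of $P_S$, so $\Theta_+ = P_S$. Since $S$ is self-adjoint, Lemma~\ref{lem:sa2s} shows that $\Theta$ is symmetric. Lemma~\ref{lem:sinvb} \ref{pbinv:2} $\Leftrightarrow$ \ref{pbinv:3} then gives the equivalence directly.

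\textbf{Step 3: \ref{it:eq41} $\Leftrightarrow$ \ref{it:eq43}.} Transport the balanced condition through $I_\mathcal{B}$. For $a^* = I_\mathcal{B}^{-1}(u)$ and $b^* = I_\mathcal{B}^{-1}(v)$,
\begin{equation*}
L_\circ^*(P_S(a^*))b^* = L_\circ^*(S(u)) I_\mathcal{B}^{-1}(v) = I_\mathcal{B}^{-1}(S(u) \circ v).
\end{equation*}
So \eqref{eq:sadi1} for $P_S$ is equivalent to $S(u) \circ v = S(v) \circ u$ for all $u, v$, which is \eqref{eq:bal} for $(A; L_\circ)$.

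Alternatively, \ref{it:eq41} $\Leftrightarrow$ \ref{it:eq42} can be checked directly by pairing with $\mathcal{B}$:
\begin{equation*}
\mathcal{B}(S(u) \circ v, w) = \mathcal{B}(u, S(v \circ w))
\end{equation*}
by invariance and self-adjointness, and
\begin{equation*}
\mathcal{B}(S(v) \circ u, w) = \mathcal{B}(u, S(v) \circ w).
\end{equation*}
Hence balancedness is equivalent to $S(v \circ w) = S(v) \circ w$, which by commutativity is the homomorphism condition.

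The only point needing care is keeping the convention for $I_\mathcal{B}$ consistent, namely that $\langle I_\mathcal{B}^{-1}(x), y \rangle = \mathcal{B}(x, y)$, together with the symmetry of $\mathcal{B}$. Everything else is bookkeeping, so no real obstacle is expected.
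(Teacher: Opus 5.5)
Your proof is correct, but it is organized differently from the paper's. The paper argues hub-and-spoke from (i). It proves (i)$\Leftrightarrow$(ii), (i)$\Leftrightarrow$(iii) and (i)$\Leftrightarrow$(iv) by three explicit pairing identities. Each one expresses the relevant defect, after a suitable pairing, through the balancedness defect $S(u)\circ v - S(v)\circ u$ of $S$, and self-adjointness is used inside the (i)$\Leftrightarrow$(ii) and (i)$\Leftrightarrow$(iv) computations.

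You instead start from the intertwining relation $I_\mathcal{B}L_\circ^*(x) = L_\circ(x)I_\mathcal{B}$, the content of Proposition~\ref{dualdj:x}. This turns (ii)$\Leftrightarrow$(iv) and (i)$\Leftrightarrow$(iii) into transport of structure along $I_\mathcal{B}$, and you then get (iii)$\Leftrightarrow$(iv) from Lemmas~\ref{lem:sa2s} and~\ref{lem:sinvb}. Your Step~3 is the paper's (i)$\Leftrightarrow$(iii) computation in coordinate-free form, and your alternative (i)$\Leftrightarrow$(ii) check coincides with the paper's. The genuinely different piece is that you replace the paper's direct (i)$\Leftrightarrow$(iv) calculation by transport plus Lemma~\ref{lem:sinvb}.

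What your arrangement buys is transparency. It shows that (i)$\Leftrightarrow$(iii) and (ii)$\Leftrightarrow$(iv) hold for every linear $S$, using only invariance of $\mathcal{B}$. Self-adjointness enters exactly once, to link \emph{balanced} with \emph{homomorphism}. The paper's version is more computational but self-contained, and does not lean on the earlier lemmas.

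One presentational point: your opening plan (two groups linked by (ii)$\Leftrightarrow$(iv)) does not quite match the chain you actually execute, (ii)$\Leftrightarrow$(iv)$\Leftrightarrow$(iii)$\Leftrightarrow$(i). That chain already suffices, so the direct (i)$\Leftrightarrow$(ii) check is redundant.
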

\begin{proof}
	Let $a^*, b^* \in A^*$ and $x, y, z \in A$.
	Then the following identity
	\begin{align*}
		 & \mathcal{B}(S(x \circ y) - x \circ S(y), z) = \mathcal{B}(x \circ y, S(z)) - \mathcal{B}(x, S(y) \circ z) =  \mathcal{B}(x, y \circ S(z) - S(y) \circ z),
	\end{align*}
	imply the equivalence between \ref{it:eq41} and \ref{it:eq42}.
	Similarly, the following computation
	\begin{align*}
		& \langle L_{\circ}^*(P_S(a^*))b^* - L_{\circ}^*(P_S(b^*))a^*, x\rangle = \langle b^*, P_S(a^*) \circ x  \rangle - \langle a^*, P_S(b^*) \circ x \rangle \\
		=\;  & \mathcal{B}(I_\mathcal{B}(b^*), P_S(a^*) \circ x) - \mathcal{B}(I_\mathcal{B}(a^*), P_S(b^*) \circ x) = \mathcal{B}(SI_\mathcal{B}(a^*) \circ I_\mathcal{B}(b^*) - I_\mathcal{B}(a^*) \circ SI_\mathcal{B}(b^*), x) ,
	\end{align*}
	establishes the equivalence between \ref{it:eq41} and \ref{it:eq43}.
	Finally, identity
	\begin{align*}
		& \langle b^*, P_S(L_{\circ}^*(x)a^*) - x \circ P_S(a^*)\rangle = \mathcal{B}( I_\mathcal{B}(b^*), P_S(L_{\circ}^*(x)a^*)) - \mathcal{B}(I_\mathcal{B}(b^*), x \circ P_S(a^*)) \\
		=\; & \mathcal{B}(P_S(b^*), I_\mathcal{B}(L_{\circ}^*(x)a^*)) - \mathcal{B}(x \circ P_S(a^*), I_\mathcal{B}(b^*) ) = \langle a^*, x \circ P_S(b^*) \rangle - \mathcal{B}(x \circ P_S(a^*), I_\mathcal{B}(b^*) ) \\
		=\; & \mathcal{B}(I_\mathcal{B}(a^*), x \circ P_S(b^*)) - \mathcal{B}(x \circ P_S(a^*), I_\mathcal{B}(b^*) ) = \mathcal{B}(x, SI_\mathcal{B}(b^*) \circ I_\mathcal{B}(a^*)  - I_\mathcal{B}(b^*) \circ SI_\mathcal{B}(a^*) ),
	\end{align*}
	show the equivalence between \ref{it:eq41} and \ref{it:eq44}.
	The proof is complete.
\end{proof}

\begin{proposition}\label{prop:ajo2s}
	Let $(A, \circ, \mathcal{B})$ be a quadratic Jordan algebra, $I_\mathcal{B}$ be the induced linear isomorphism by $\mathcal{B}$, and $T, S: A \to A$ be linear maps.
	Then $T$ is an extended relative Rota-Baxter operator with extension $S$ associated to $(A; L_{\circ})$ if and only if $P_T=T I_\mathcal{B}: A^* \to A$ is an extended relative Rota-Baxter operator with extension $P_S = S I_\mathcal{B}: A^* \to A$ associated to $(A^*; L_{\circ}^*)$.
\end{proposition}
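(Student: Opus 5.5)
The argument transports Eq.~\eqref{eq:exto} along the isomorphism $I_\mathcal{B}$, using that it intertwines the two representations. From the proof of Proposition~\ref{dualdj:x}, $I_\mathcal{B}^{-1}$ is an isomorphism of representations from $(A; L_{\circ})$ to $(A^*; L_{\circ}^*)$, i.e. $I_\mathcal{B}^{-1}(x \circ y) = L_{\circ}^*(x) I_\mathcal{B}^{-1}(y)$ for all $x, y \in A$. Equivalently, applying $I_\mathcal{B}$ to both sides,
\begin{equation*}
I_\mathcal{B}\big(L_{\circ}^*(x) a^*\big) = x \circ I_\mathcal{B}(a^*), \;\; \forall x \in A, \; a^* \in A^*.
\end{equation*}
This is the only property of $\mathcal{B}$ needed. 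Note that no self-adjointness of $S$ or $T$ is required, because the extension term $-S(u)\circ S(v)$ involves neither $\rho$ nor duality.

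Let $a^*, b^* \in A^*$ and put $x = I_\mathcal{B}(a^*)$, $y = I_\mathcal{B}(b^*)$. Since $I_\mathcal{B}$ is a linear isomorphism, $(a^*, b^*)$ ranges over $A^* \times A^*$ exactly when $(x, y)$ ranges over $A \times A$. We compute each term of Eq.~\eqref{eq:exto} for $P_T$ with respect to $(A^*; L_{\circ}^*)$:
\begin{align*}
P_T(a^*) \circ P_T(b^*) &= T(x) \circ T(y), \qquad P_S(a^*) \circ P_S(b^*) = S(x) \circ S(y),\\
P_T\big(L_{\circ}^*(P_T(a^*)) b^* + L_{\circ}^*(P_T(b^*)) a^*\big) &= T I_\mathcal{B}\big(L_{\circ}^*(T(x)) b^* + L_{\circ}^*(T(y)) a^*\big) = T\big(T(x) \circ y + T(y) \circ x\big).
\end{align*}
The last equality uses the intertwining identity above, applied once with $T(x)$ acting on $b^*$ and once with $T(y)$ acting on $a^*$.

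By commutativity of $\circ$, $T(y) \circ x = x \circ T(y)$. Hence the defect
\begin{equation*}
P_T(a^*) \circ P_T(b^*) - P_T\big(L_{\circ}^*(P_T(a^*)) b^* + L_{\circ}^*(P_T(b^*)) a^*\big) + P_S(a^*) \circ P_S(b^*)
\end{equation*}
coincides with
\begin{equation*}
T(x) \circ T(y) - T\big(L_{\circ}(T(x)) y + L_{\circ}(T(y)) x\big) + S(x) \circ S(y).
\end{equation*}
One defect vanishes for all $a^*, b^*$ if and only if the other vanishes for all $x, y$, which proves both implications at once.

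The only step needing care is the direction of the intertwining identity. Proposition~\ref{dualdj:x} records it for $I_\mathcal{B}^{-1}$, so one must check that applying $I_\mathcal{B}$ produces $x \circ I_\mathcal{B}(a^*)$, with $x$ acting on the $A$-side. The symmetry and invariance of $\mathcal{B}$ are used precisely there; everything else is direct substitution.
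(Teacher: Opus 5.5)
Your proof is correct and follows essentially the same route as the paper: establish the intertwining identity $I_\mathcal{B}(L_{\circ}^*(x)a^*) = x \circ I_\mathcal{B}(a^*)$, then substitute $x = I_\mathcal{B}(a^*)$, $y = I_\mathcal{B}(b^*)$ to identify the two defects. The only differences are minor: the paper derives the identity directly from invariance rather than citing Proposition~\ref{dualdj:x}, and your explicit remark that $I_\mathcal{B}$ is bijective (so the two universal quantifiers match) is left implicit there.
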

\begin{proof}
	For all $x, y \in A$ and $a^* \in A^*$, we have
	\begin{equation*}
		\mathcal{B}(I_\mathcal{B}(L_{\circ}^*(x)a^*), y) = \langle L_{\circ}^*(x)a^*, y\rangle = \langle a^*, x \circ y\rangle = \mathcal{B}(I_\mathcal{B}(a^*), x \circ y) = \mathcal{B}(x \circ I_\mathcal{B}(a^*), y).
	\end{equation*}
	That is, $I_\mathcal{B}(L_{\circ}^*(x)a^*) = x \circ I_\mathcal{B}(a^*)$.
	Therefore,
	\begin{align*}
		 & P_T(a^*) \circ P_T(b^*) - P_T\big(L_{\circ}^*(P_T(a^*))b^* + L_{\circ}^*(P_T(b^*))a^* \big) + P_S(a^*) \circ P_S(b^*)                                                                                            \\
		 & =TI_\mathcal{B}(a^*) \circ TI_\mathcal{B}(b^*) - TI_\mathcal{B}\big(L_{\circ}^*(TI_\mathcal{B}(a^*))b^* + L_{\circ}^*(TI_\mathcal{B}(b^*))a^* \big) + SI_\mathcal{B}(a^*) \circ SI_\mathcal{B}(b^*)              \\
		 & = TI_\mathcal{B}(a^*) \circ TI_\mathcal{B}(b^*) - T\big(L_{\circ}(TI_\mathcal{B}(a^*))I_\mathcal{B}(b^*) + L_{\circ}(TI_\mathcal{B}(b^*))I_\mathcal{B}(a^*) \big) + SI_\mathcal{B}(a^*) \circ SI_\mathcal{B}(b^*).
	\end{align*}
	Hence, $T$ is an extended relative Rota-Baxter operator with extension $S$ associated to $(A; L_{\circ})$ if and only if $P_T$ is an extended relative Rota-Baxter operator with extension $P_S$ associated to $(A^*; L_{\circ}^*)$.
\end{proof}

\begin{corollary}\label{coro:exoqs}
	Let $(A, \circ, \mathcal{B})$ be a quadratic Jordan algebra, $I_\mathcal{B}$ be the induced linear isomorphism by $\mathcal{B}$, and $T, S: A \to A$ be linear maps.
	Suppose that $S$ is balanced associated to $(A; L_{\circ})$ and self-adjoint with respect to $\mathcal{B}$, and $T$ is skew-adjoint with respect to $\mathcal{B}$.
	Then the 2-tensor form of $T I_\mathcal{B} + S I_\mathcal{B}$ (resp. $T I_\mathcal{B} - S I_\mathcal{B}$) is a solution of the JYBE in $(A, \circ)$ if and only if $T$ is an extended relative Rota-Baxter operator with extension $S$ associated to $(A; L_{\circ})$.
	In particular, the 2-tensor form of $T I_\mathcal{B}$ is a solution of the JYBE in $(A, \circ)$ if and only if $T$ is a Rota-Baxter operator of weight $0$.
\end{corollary}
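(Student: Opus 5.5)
The plan is to reduce the statement to Theorem~\ref{thm:lreq} by setting $r$ to be the 2-tensor form of $TI_\mathcal{B} + SI_\mathcal{B}$ (resp.\ $TI_\mathcal{B} - SI_\mathcal{B}$). We then identify its skew-symmetric and symmetric parts, check that the symmetric part is invariant, and transfer the resulting operator identity back to $A$ using Proposition~\ref{prop:ajo2s}.

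First we decompose $r$. Set $r_+ = P_T + P_S$ with $P_T = TI_\mathcal{B}$ and $P_S = SI_\mathcal{B}$. By Lemma~\ref{lem:sa2s}, the 2-tensor form of $P_T$ is skew-symmetric, because $T$ is skew-adjoint. Likewise the 2-tensor form of $P_S$ is symmetric, because $S$ is self-adjoint. Since the passage $r \mapsto r_+$ is linear and bijective, writing $r = \Lambda + \Theta$ gives $\Lambda_+ = P_T$ and $\Theta_+ = P_S$. For the minus sign, $\Theta_+ = -P_S$ instead.

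Second, we check invariance of $\Theta$. The map $S$ is balanced and self-adjoint, so Lemma~\ref{lem:eq4q} shows that $P_S$ is a homomorphism of representations from $(A^*; L_\circ^*)$ to $(A; L_\circ)$. The same then holds for $-P_S$. By Lemma~\ref{lem:sinvb}, this means $\Theta$ is invariant. The hypotheses of Theorem~\ref{thm:lreq} are therefore met, and $r$ solves the JYBE if and only if $P_T$ is an extended relative Rota-Baxter operator with extension $\pm P_S$ associated to $(A^*; L_\circ^*)$.

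Third, the sign of the extension does not matter. Eq.~\eqref{eq:exto} involves the extension only through $S(u)\circ S(v)$, which is unchanged when $S$ is replaced by $-S$. So the condition is equivalent to $P_T$ being an extended relative Rota-Baxter operator with extension $P_S$. Proposition~\ref{prop:ajo2s} then translates this into the statement that $T$ is an extended relative Rota-Baxter operator with extension $S$ associated to $(A; L_\circ)$.

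For the special case, take $S = 0$. It is trivially balanced and self-adjoint. The condition then reads $T(x)\circ T(y) = T(T(x)\circ y + x\circ T(y))$, which is exactly a Rota-Baxter operator of weight $0$.

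The only delicate point is the bookkeeping in the first step, namely matching $\Lambda_+$ and $\Theta_+$ with $P_T$ and $\pm P_S$ under the sign conventions for $r_\pm$. The rest is a direct chaining of Theorem~\ref{thm:lreq}, Lemma~\ref{lem:eq4q}, Lemma~\ref{lem:sinvb} and Proposition~\ref{prop:ajo2s}.
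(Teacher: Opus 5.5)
Your proof is correct and follows the paper's argument essentially step for step: Lemma~\ref{lem:sa2s} splits $r$ into its skew-symmetric and symmetric parts, Lemmas~\ref{lem:eq4q} and~\ref{lem:sinvb} give invariance of the symmetric part, and then Theorem~\ref{thm:lreq} and Proposition~\ref{prop:ajo2s} finish the argument. One point you make explicit that the paper leaves implicit is that in the minus case Theorem~\ref{thm:lreq} produces the extension $-SI_\mathcal{B}$, which may be replaced by $SI_\mathcal{B}$ because the extension enters only through $S(u)\circ S(v)$.
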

\begin{proof}
	By Lemma~\ref{lem:sa2s}, the 2-tensor form of $T I_\mathcal{B}$ is skew-symmetric, while that of $S I_\mathcal{B}$ is symmetric.
	Since $S$ is balanced associated to $(A; L_{\circ})$, it follows from Lemmas~\ref{lem:sinvb} and \ref{lem:eq4q}   that the 2-tensor form of $S I_\mathcal{B}$ is invariant.
	Then, by Theorem~\ref{thm:lreq}, the 2-tensor form of $T I_\mathcal{B} + S I_\mathcal{B}$ (resp. $T I_\mathcal{B} - S I_\mathcal{B}$) is a solution of the JYBE in $(A, \circ)$ if and only if $T I_\mathcal{B}: A^* \to A$ is an extended relative Rota-Baxter operator with extension $S I_\mathcal{B}$ associated to $(A^*; L_{\circ}^*)$.
	Therefore, Proposition~\ref{prop:ajo2s} implies that the 2-tensor form of $T I_\mathcal{B} + S I_\mathcal{B}$ (resp. $T I_\mathcal{B} - S I_\mathcal{B}$) is a solution of the JYBE in $(A, \circ)$ if and only if $T$ is an extended relative Rota-Baxter operator with extension $S$ associated to $(A; L_{\circ})$.
	Finally, the particular case is obtained by taking $S = 0$.
\end{proof}

\begin{corollary}\label{coro:rexoqs}
	Let $(A, \circ, \mathcal{B})$ be a quadratic Jordan algebra, $I_\mathcal{B}: A^* \to A$ be the induced linear isomorphism by $\mathcal{B}$ and $r \in A \otimes A$.
	Writing $r$ as $r = \Lambda + \Theta$ with $\Lambda \in \mathrm{Alt}^2(A)$ and $\Theta \in \mathrm{Sym}^2(A)$.
	Suppose that $\Theta$ is invariant.
	Then $r$ is a solution of the JYBE in $(A, \circ)$ if and only if $\Lambda_{+}I_\mathcal{B}^{-1}: A \to A$ is an extended relative Rota-Baxter operator with extension $\Theta_{+}I_\mathcal{B}^{-1}: A \to A$ associated to $(A; L_{\circ})$.
	In particular, $\Lambda$ is a solution of the JYBE in $(A, \circ)$ if and only if $\Lambda_{+}I_\mathcal{B}^{-1}: A \to A$ is a Rota-Baxter operator of weight 0.
\end{corollary}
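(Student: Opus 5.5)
The plan is to reduce Corollary~\ref{coro:rexoqs} to Corollary~\ref{coro:exoqs} by transporting $\Lambda_{+}$ and $\Theta_{+}$ back to endomorphisms of $A$ through $I_\mathcal{B}$. Set
\[
T := \Lambda_{+} I_\mathcal{B}^{-1}, \qquad S := \Theta_{+} I_\mathcal{B}^{-1},
\]
both maps $A \to A$. Then $T I_\mathcal{B} = \Lambda_{+}$ and $S I_\mathcal{B} = \Theta_{+}$. Hence the 2-tensor form of $T I_\mathcal{B} + S I_\mathcal{B}$ is exactly $\Lambda + \Theta = r$, since $r_{+} = \Lambda_{+} + \Theta_{+}$ and a tensor is determined by its $(\cdot)_{+}$ map.

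The first step is to verify the hypotheses of Corollary~\ref{coro:exoqs} for $T$ and $S$.
\begin{enumerate}[label=(\alph*)]
	\item Since $\Lambda$ is skew-symmetric and is the 2-tensor form of $T I_\mathcal{B}$, Lemma~\ref{lem:sa2s} gives that $T$ is skew-adjoint with respect to $\mathcal{B}$.
	\item Since $\Theta$ is symmetric and is the 2-tensor form of $S I_\mathcal{B}$, Lemma~\ref{lem:sa2s} gives that $S$ is self-adjoint.
	\item Since $\Theta$ is invariant, Lemma~\ref{lem:sinvb} shows that $\Theta_{+} = S I_\mathcal{B}$ is balanced associated to $(A^*; L_{\circ}^*)$. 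Because $S$ is self-adjoint, Lemma~\ref{lem:eq4q} (the equivalence of \ref{it:eq41} and \ref{it:eq43}) then yields that $S$ is balanced associated to $(A; L_{\circ})$.
\end{enumerate}
The only point needing care is (c): Lemma~\ref{lem:eq4q} requires self-adjointness of $S$, so (b) must be established first.

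With these hypotheses in place, Corollary~\ref{coro:exoqs} says that $r$ solves the JYBE if and only if $T = \Lambda_{+} I_\mathcal{B}^{-1}$ is an extended relative Rota-Baxter operator with extension $S = \Theta_{+} I_\mathcal{B}^{-1}$ associated to $(A; L_{\circ})$. Alternatively, the same conclusion follows by combining Theorem~\ref{thm:lreq} \ref{it:reqd1}$\Leftrightarrow$\ref{it:reqd2} with Proposition~\ref{prop:ajo2s}, taking $P_T = \Lambda_{+}$ and $P_S = \Theta_{+}$.

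For the particular case, apply the main statement to $r = \Lambda$, i.e.\ with $\Theta = 0$, which is trivially invariant. The extension is then $S = 0$, so equation~\eqref{eq:exto} becomes $T(x) \circ T(y) = T\big(T(x) \circ y + x \circ T(y)\big)$. This is exactly the statement that $\Lambda_{+} I_\mathcal{B}^{-1}$ is a Rota-Baxter operator of weight $0$, matching the last sentence of Corollary~\ref{coro:exoqs}.

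No real obstacle is expected; the argument is bookkeeping with the identifications $r \leftrightarrow r_{+}$ and $I_\mathcal{B}$.
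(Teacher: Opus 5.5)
Your proposal is correct and follows the paper's proof essentially verbatim. You set $T=\Lambda_{+}I_\mathcal{B}^{-1}$ and $S=\Theta_{+}I_\mathcal{B}^{-1}$, obtain skew-/self-adjointness from Lemma~\ref{lem:sa2s} and balancedness of $S$ from Lemmas~\ref{lem:sinvb} and~\ref{lem:eq4q}, conclude by Corollary~\ref{coro:exoqs}, and get the particular case from $\Theta=0$. Your alternative route through Theorem~\ref{thm:lreq} and Proposition~\ref{prop:ajo2s} is also valid and slightly more economical: Proposition~\ref{prop:ajo2s} needs no adjointness or balancedness hypotheses, so steps (a)--(c) become unnecessary.
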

\begin{proof}
	By Lemma~\ref{lem:sa2s}, $\Lambda_{+}I_\mathcal{B}^{-1}$ is skew-adjoint with respect to $\mathcal{B}$, and $\Theta_{+}I_\mathcal{B}^{-1}$ is self-adjoint with respect to $\mathcal{B}$.
	Moreover, since $\Theta$ is invariant, it follows from Lemma~\ref{lem:sinvb} and \ref{lem:eq4q} that $\Theta_{+}I_\mathcal{B}^{-1}$ is balanced associated to $(A; L_{\circ})$.
	The desired conclusions now follow from Corollary~\ref{coro:exoqs}.
\end{proof}

\begin{example}
	Let $(A, \circ)$ be the 2-dimensional Jordan algebra with basis $\{e_1, e_2\}$ whose non-zero product is defined by
	\begin{align*}
		e _1 \circ e_1 & = e_1, \quad e_1 \circ e_2 = e_2 = e_2 \circ e_1.
	\end{align*}
	Define a linear map $T: A \to A$ by
	\begin{equation*}
		T(e_1) =   e_1, \;\; T(e_2) = -   e_2.
	\end{equation*}
	Then $T$ is an extended relative Rota-Baxter operator with extension $S=\id$ associated to $(A; L_{\circ})$.
	Let $\mathcal{B}$ be the bilinear form defined by
	\begin{equation*}
		\mathcal{B}(e_1, e_1) = \mathcal{B}(e_2, e_2) = 0, \;\;
		\mathcal{B}(e_1, e_2) = \mathcal{B}(e_2, e_1) = 1.
	\end{equation*}
	Then $(A, \circ, \mathcal{B})$ is a quadratic Jordan algebra.
	Moreover $T$ is skew-adjoint with respect to $\mathcal{B}$.
	Thus by Corollary \ref{coro:exoqs}, the 2-tensor form of $T I_\mathcal{B} \pm S I_\mathcal{B}$:
	\begin{equation*}
		2 e_1 \otimes e_2, \quad 2e_2 \otimes e_1 
	\end{equation*}
	are solutions of the JYBE in $(A, \circ)$.
\end{example}

%%%%%%%%%%%%%%%%%%%%%%%%%%%%%%%%%%%%%%%%%%%%%%%%%%%%%%%%%%%%%%%%%%%%%%%%%%%%%%%%
\subsection{Extended relative Rota-Baxter operators on semi-direct product}

Let $T: V \to A$ be a linear map.
Through the identification $\Hom(V, A) \cong A \otimes V^* \subset (A \oplus V^*) \otimes (A \oplus V^*)$, we may regard $T$ as an element $r^{T} \in (A \oplus V^*) \otimes (A \oplus V^*)$ of the tensor product.
More precisely, for a basis $\{e_1, \cdots, e_n\}$ of $V$ with the dual basis $\{e_1^*, \cdots, e_n^*\}$, we have $r^T = \sum_{i} T(e_i) \otimes e_i^*$.

\begin{lemma}\label{lem:b2bsm}
	Let $(A, \circ)$ be a Jordan algebra, $(V; \rho)$ be a representation of $(A, \circ)$, and $(\mathfrak{A}=A \ltimes_{\rho^*} V^*, \bullet)$ be the semi-direct product of $(A, \circ)$ and $(V^*; \rho^*)$.
	Let $S: V \to A$ be a linear map.
	Then $\mathcal{S} := r^{S}_{+} - r^{S}_{-}: \mathfrak{A}^* \to \mathfrak{A}$ is a balanced homomorphism of representations from $(\mathfrak{A}^*; L_{\bullet}^*)$ to $(\mathfrak{A}; L_{\bullet})$ if and only if $S$ is a balanced homomorphism of representations from $(V; \rho)$ to $(A; L_{\circ})$.
\end{lemma}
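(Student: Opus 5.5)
The plan is to compute $\mathcal{S}$ explicitly and reduce everything to the homomorphism condition \eqref{eq:adi1} for $\mathcal{S}$ on the semi-direct product. This works because $\mathcal{S}$ is the $+$-map of the symmetric tensor $r^S+\tau(r^S)$: since $(\tau(r))_+ = -r_-$, we have $\mathcal{S} = (r^S+\tau(r^S))_+$. By Lemma~\ref{lem:sinvb}, applied to the Jordan algebra $(\mathfrak{A},\bullet)$, for this $\mathcal{S}$ being balanced is equivalent to being a homomorphism of representations from $(\mathfrak{A}^*;L_\bullet^*)$ to $(\mathfrak{A};L_\bullet)$. So it suffices to show that \eqref{eq:adi1} for $\mathcal{S}$ holds if and only if $S$ satisfies both \eqref{eq:bal} and \eqref{eq:ainv}.

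\emph{Step 1: identify $\mathcal{S}$.} Identify $\mathfrak{A}^* = A^*\oplus V$. From $r^S=\sum_i S(e_i)\otimes e_i^*$ and the defining pairing we get
\[
\langle r^S_+(a^*+u), b^*+v\rangle = \langle a^*, S(v)\rangle ,
\]
so $r^S_+(a^*+u)=S^*(a^*)\in V^*$. Similarly $r^S_-(b^*+v) = -S(v)$. Hence
\[
\mathcal{S}(a^*+u) = S(u) + S^*(a^*), \qquad a^*\in A^*,\ u\in V .
\]

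\emph{Step 2: compute the coregular action.} Compute $L_\bullet^*(x+\xi)(a^*+u)$ for $x\in A$, $\xi\in V^*$ by pairing with $y+\eta\in\mathfrak{A}$, using the product of $A\ltimes_{\rho^*}V^*$. The $V$-component is $\rho(x)u$. The $A^*$-component is $L_\circ^*(x)a^* + \zeta_{\xi,u}$, where $\zeta_{\xi,u}\in A^*$ is given by $\langle \zeta_{\xi,u}, y\rangle = \langle \xi, \rho(y)u\rangle$.

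\emph{Step 3: split into components.} Apply $\mathcal{S}$ and compare with
\[
(x+\xi)\bullet \mathcal{S}(a^*+u) = x\circ S(u) + \rho^*(x)S^*(a^*) + \rho^*(S(u))\xi .
\]
Since $a^*,u,x,\xi$ are independent, the identity splits into three parts.
\begin{enumerate}
\item[(1)] The $A$-component gives $S(\rho(x)u) = x\circ S(u)$, which is exactly \eqref{eq:ainv}.
\item[(2)] The $a^*$-dependent $V^*$-part, paired with $v\in V$, gives $\langle a^*, x\circ S(v)\rangle = \langle a^*, S(\rho(x)v)\rangle$. This is again \eqref{eq:ainv}.
\item[(3)] The $\xi$-dependent $V^*$-part, paired with $v$, gives $\langle \xi, \rho(S(v))u\rangle = \langle\xi, \rho(S(u))v\rangle$. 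This is exactly \eqref{eq:bal}.
\end{enumerate}
Thus $\mathcal{S}$ satisfies \eqref{eq:adi1} if and only if $S$ is a balanced homomorphism of representations, and by the first paragraph this proves the lemma in both directions.

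The main obstacle is bookkeeping rather than any conceptual point. One has to get the signs of $r_\pm$ right, so that the minus in $r^S_+-r^S_-$ yields $+S(u)$ and not a cancellation. One also has to track correctly the cross term $\zeta_{\xi,u}$ coming from $\rho^*(y)\xi$ in the semi-direct product, since it is precisely this term that produces the balanced condition.
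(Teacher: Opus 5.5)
Your proposal is correct and follows essentially the same route as the paper: identify $\mathcal{S}=(r^S+\tau(r^S))_{+}$, apply Lemma~\ref{lem:sinvb} on $(\mathfrak{A},\bullet)$ to reduce to a single condition, compute $\mathcal{S}(a^*+u)=S(u)+S^*(a^*)$, and split the resulting identity into components. The only difference is that you check the homomorphism condition \eqref{eq:adi1} for $\mathcal{S}$, while the paper checks the balanced condition \eqref{eq:sadi1} by pairing against $x+\xi^*$; Lemma~\ref{lem:sinvb} makes these two conditions interchangeable, and both computations yield exactly \eqref{eq:ainv} and \eqref{eq:bal}.
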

\begin{proof}
	Note that $r^S + \tau(r^S)$ is symmetric and $(r^S + \tau(r^S))_{+} = r^{S}_{+} - r^{S}_{-} = \mathcal{S}$. By Lemma~\ref{lem:sinvb}, $\mathcal{S}: \mathfrak{A}^* \to \mathfrak{A}$ is a balanced homomorphism of representations from $(\mathfrak{A}^*; L_{\bullet}^*)$ to $(\mathfrak{A}; L_{\bullet})$ if and only if it is balanced associated to $(\mathfrak{A}^*; L_{\bullet}^*)$.
	It then suffices to prove that $\mathcal{S}$ is balanced associated to $(\mathfrak{A}^*; L_{\bullet}^*)$ if and only if $S$ is a balanced homomorphism of representations from $(V; \rho)$ to $(A; L_{\circ})$.
	For all $a^* \in A^*$ and $u \in V$, we have
	\begin{equation*}
		\mathcal{S}(a^* + u) = S(u) + S^*(a^*).
	\end{equation*}
	Let $a^*, b^* \in A^*$, $x \in A$, $\xi^* \in V^*$ and $u, v \in V$, we have
	\begin{align*}
		& \langle -L_{\bullet}^*((r^{S}_{+} - r^{S}_{-})(a^* + u)) (b^* + v) + L_{\bullet}^*((r^{S}_{+} - r^{S}_{-})(b^* + v)) (a^* + u), x + \xi^*\rangle \\
		=\; & \langle -L_{\bullet}^*(S(u) + S^*(a^*)) (b^* + v) + L_{\bullet}^*(S(v) + S^*(b^*)) (a^* + u), x + \xi^* \rangle                                \\
		=\; & -\langle b^* + v, (S(u) + S^*(a^*)) \bullet (x + \xi^*)\rangle + \langle a^* + u, (S(v) + S^*(b^*)) \bullet (x + \xi^*)  \rangle \\
		=\; & \langle a^*, S(v) \circ x \rangle + \langle \rho^*(S(v))\xi^* + \rho^*(x)S^*(b^*), u \rangle -\langle b^*, S(u) \circ x \rangle - \langle \rho^*(S(u))\xi^* + \rho^*(x)S^*(a^*), v \rangle \\
		=\; & \langle a^*, S(v) \circ x - S(\rho(x)v)\rangle - \langle b^*, S(u) \circ x - S(
		\rho(x)u) \rangle + \langle \xi^*, \rho(S(v))u - \rho(S(u))v \rangle.
	\end{align*}
	Therefore, $\mathcal{S}: \mathfrak{A}^* \to \mathfrak{A}$ is balanced associated to $(\mathfrak{A}^*; L_{\bullet}^*)$ if and only if $S$ is a balanced homomorphism of representations from $(V; \rho)$ to $(A; L_{\circ})$.
	The proof is complete.
\end{proof}

\begin{theorem}\label{thm:eo2smp}
	Let $(A, \circ)$ be a Jordan algebra, $(V; \rho)$ be a representation of $(A, \circ)$, and $(\mathfrak{A}=A \ltimes_{\rho^*} V^*, \bullet)$ be the semi-direct product of $(A, \circ)$ and $(V^*; \rho^*)$.
	Let $T, S: V \to A$ be linear maps.
	Suppose that $S$ is a homomorphism of representations from $(V; \rho)$ to $(A; L_{\circ})$.
	Then $\mathcal{T} := r_{+}^{T} + r_{-}^{T}: \mathfrak{A}^* \to \mathfrak{A}$ is an extended relative Rota-Baxter operator with extension $\mathcal{S} := r_{+}^{S} - r_{-}^{S}$ on $(\mathfrak{A}, \bullet)$ associated to $(\mathfrak{A}^*; L_{\bullet}^*)$ if and only if $T: V \to A$ is an extended relative Rota-Baxter operator of weight 0 with extension $S$ associated to $(V; \rho)$.
\end{theorem}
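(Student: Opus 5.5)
Write $\mathfrak{A}^*=A^*\oplus V$. The plan is a direct computation: make $\mathcal{T}$ and $\mathcal{S}$ explicit, then expand the defining identity \eqref{eq:exto} on $\mathfrak{A}$ component by component.

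\textbf{Explicit form of the operators.} From $r^T=\sum_i T(e_i)\otimes e_i^*$ and the definition of $r_\pm$ we get $r^T_+(a^*+u)=T(u)$ and $r^T_-(a^*+u)=-T^*(a^*)$. Hence
\[
\mathcal{T}(a^*+u)=T(u)-T^*(a^*),\qquad \mathcal{S}(a^*+u)=S(u)+S^*(a^*),
\]
where the formula for $\mathcal{S}$ is already used in the proof of Lemma~\ref{lem:b2bsm}.

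\textbf{Step 1: coadjoint action.} I will compute $L_\bullet^*(x+\xi^*)(b^*+v)$ for $x\in A$, $\xi^*\in V^*$. Pairing with $y+\eta^*$ gives
\[
\langle b^*,x\circ y\rangle+\langle v,\rho^*(x)\eta^*+\rho^*(y)\xi^*\rangle .
\]
So
\[
L_\bullet^*(x+\xi^*)(b^*+v)=L_\circ^*(x)b^*+\theta(\xi^*,v)+\rho(x)v,
\]
where $\theta(\xi^*,v)\in A^*$ is defined by $\langle\theta(\xi^*,v),y\rangle=\langle\xi^*,\rho(y)v\rangle$.

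\textbf{Step 2: expand the defining identity.} Put $X=a^*+u$ and $Y=b^*+v$, and expand
\[
\mathcal{T}(X)\bullet\mathcal{T}(Y)-\mathcal{T}\big(L_\bullet^*(\mathcal{T}X)Y+L_\bullet^*(\mathcal{T}Y)X\big)+\mathcal{S}(X)\bullet\mathcal{S}(Y)\in A\oplus V^*.
\]
\begin{itemize}
\item[(a)] The $A$-component, evaluated at $a^*=b^*=0$, is $T(u)\circ T(v)-T(\rho(T(u))v+\rho(T(v))u)+S(u)\circ S(v)$. This is exactly the condition \eqref{eq:exto} for $T$.
\item[(b)] The $A$-component terms with one dual argument come from $\mathcal{T}(u)\bullet(-T^*(b^*))$, the cross terms from $\theta$, and $S(u)\bullet S^*(b^*)$. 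They all lie in $V^*$, or are mapped by $T$ into $A$ only through $\rho(\cdot)$-terms.
\item[(c)] The mixed components are therefore $V^*$-valued. I will pair them with $w\in V$, reducing them to scalar identities of the form $\langle b^*,\ \cdots\rangle$ involving $T(u)\circ T(w)$, $T(\rho(T(u))w)$, $T(\rho(T(w))u)$, and $S$-terms.
\end{itemize}
The aim is to show each such scalar expression equals $\langle b^*,\ T(u)\circ T(w)-T(\rho(T(u))w+\rho(T(w))u)+S(u)\circ S(w)\rangle$, possibly with the roles of $u,w$ swapped. The $S$-part is matched using the homomorphism hypothesis \eqref{eq:ainv}, in the form $\langle S^*(b^*),\rho(y)w\rangle=\langle b^*,y\circ S(w)\rangle$. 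The $(a^*,b^*)$-component is a product of two elements of $V^*$, hence vanishes.

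\textbf{Main obstacle.} The hard part is the bookkeeping in the mixed component. Here the $\theta$-terms must combine with $-T^*(b^*)\bullet T(u)=-\rho^*(T(u))T^*(b^*)$ to reproduce the Rota-Baxter expression dualized. Sign conventions from $r_-=-r_+^*$ are the likely source of error. I will first check the case $S=0$, where this should be the known semi-direct product $\mathcal{O}$-operator/JYBE argument, and then track the additional $S^*(a^*)\bullet S(v)$ terms via \eqref{eq:ainv}. Once every component is expressed through the defect of \eqref{eq:exto} for $T$, both directions of the equivalence follow at once.
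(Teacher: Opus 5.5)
Your proposal is correct and follows the paper's proof. In both, expanding the defining identity for $\mathcal{T}$ and pairing with $c^*+w$ gives $\langle c^*,D(u,v)\rangle+\langle b^*,D(u,w)\rangle+\langle a^*,D(v,w)\rangle$, where $D(u,v)=T(u)\circ T(v)-T(\rho(T(u))v+\rho(T(v))u)+S(u)\circ S(v)$, and \eqref{eq:ainv} is needed only for the $S$-cross terms.

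One small slip: with the paper's convention $\langle r_{+}(a^*),b^*\rangle=\langle r,a^*\otimes b^*\rangle$, one gets $r^T_{+}(a^*+u)=T^*(a^*)$ and $r^T_{-}(a^*+u)=-T(u)$. So $\mathcal{T}(a^*+u)=T^*(a^*)-T(u)$ is the negative of your formula. This is harmless, because the defining identity is homogeneous of degree two in $\mathcal{T}$, and $\mathcal{S}$ is unaffected.
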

\begin{proof}
	For all $a^* \in A^*$ and $v \in V$, we have
	\begin{equation*}
		\mathcal{T}(a^* + u) =  -T(u) + T^*(a^*).
	\end{equation*}
	For all $a^*, b^*, c^* \in A^*$ and $u, v, w \in V$, we have
	\begin{align*}
		&\langle \mathcal{T}(a^* + u) \bullet \mathcal{T}(b^* + v) + \mathcal{S}(a^*+u) \bullet \mathcal{S}(b^*+v), c^*+ w\rangle                             \\
		& - \langle \mathcal{T}\big( L_{\bullet}^*(\mathcal{T}(a^* + u))(b^*+v) + L_{\bullet}^*(\mathcal{T}(b^* + v))(a^*+u)\big), c^*+ w\rangle \\
		\overset{\hphantom{\eqref{eq:ainv}}}{=}\; & \langle (-T(u) + T^*(a^*)) \bullet ( -T(v) + T^*(b^*)) + (S(u) + S^*(a^*)) \bullet (S(v) + S^*(b^*)), c^*+ w\rangle \\
		& + \langle L_{\bullet}^*(-T(u) + T^*(a^*))(b^*+v) + L_{\bullet}^*(-T(v) + T^*(b^*))(a^*+u), \mathcal{T}(c^*+ w) \rangle \\
		\overset{\hphantom{\eqref{eq:ainv}}}{=}\; & \langle c^*, T(u) \circ T(v) + S(u) \circ S(v) \rangle \\
		& - \langle \rho^*(T(u))T^*(b^*) + \rho^*(T(v))T^*(a^*) - \rho^*(S(u))S^*(b^*) - \rho^*(S(v))S^*(a^*), w\rangle                                 \\
		& + \langle L_{\bullet}^*(-T(u) + T^*(a^*) )(b^*+v) + L_{\bullet}^*(-T(v) + T^*(b^*) )(a^*+u), \mathcal{T}(c^* + w)\rangle                    \\
		\overset{\hphantom{\eqref{eq:ainv}}}{=}\; &\langle c^*, T(u) \circ T(v) + S(u) \circ S(v) \rangle \\
		& - \langle b^*, T(\rho(T(u))w) - S(\rho(S(u))w)\rangle - \langle a^*, T(\rho(T(v))w) - S(\rho (S(v))w) \rangle                                  \\
		& + \langle b^* + v, (-T(u) + T^*(a^*)) \bullet (-T(w) + T^*(c^*)) \rangle \\
		& + \langle a^* + u, (-T(v) + T^*(b^*)) \bullet (-T(w) + T^*(c^*))  \rangle \\
		\overset{\hphantom{\eqref{eq:ainv}}}{=}\; & \langle c^*, T(u) \circ T(v) + S(u) \circ S(v) \rangle \\
		& - \langle b^*, T(\rho(T(u))w) - S(\rho(S(u))w)\rangle - \langle a^*, T(\rho(T(v))w) - S(\rho(S(v))w) \rangle \\
		& + \langle b^* + v, T(u) \circ T(w) - \rho^*(T(u))(T^*(c^*)) - \rho^*(T(w))(T^*(a^*)) \rangle \\
		& + \langle a^* + u, T(v) \circ T(w)  - \rho^*(T(v))(T^*(c^*)) - \rho^*(T(w))(T^*(b^*))  \rangle \\
		\overset{\hphantom{\eqref{eq:ainv}}}{=}\; & \langle c^*, T(u) \circ T(v) - T(\rho(T(u))v) - T(\rho(T(v))u) + S(u) \circ S(v) \rangle \\
		& - \langle b^*, T(\rho(T(u))w)   - S(\rho(S(u))w) - T(u) \circ T(w) + T(\rho(T(w))u) \rangle \\
		& - \langle a^*, T(\rho(T(v))w) - S(\rho(S(v))w)  + T(\rho(T(w))v) - T(v) \circ T(w) \rangle \\
		\overset{\eqref{eq:ainv}}{=}\; & \langle c^*, T(u) \circ T(v) - T(\rho(T(u))v) - T(\rho(T(v))u) + S(u) \circ S(v) \rangle \\
		& + \langle b^*, T(u) \circ T(w)  - T(\rho(T(u))w) - T(\rho(T(w))u)  + S(u) \circ S(w) \rangle \\
		& + \langle a^*,  T(v) \circ T(w) - T(\rho(T(v))w) - T(\rho(T(w))v)  + S(v) \circ S(w) \rangle.
	\end{align*}
	Therefore, $\mathcal{T}$ is an extended relative Rota-Baxter operator with extension $\mathcal{S}$ on $(\mathfrak{A}, \bullet)$ associated $(\mathfrak{A}^*; L_{\bullet}^*)$ if and only if $T: V \to A$ is an extended relative Rota-Baxter operator with extension $S$ on $(A, \circ)$ associated to $(V; \rho)$.
\end{proof}

\begin{corollary}\label{cor:tsc}
	Let $(A, \circ)$ be a Jordan algebra, $(V; \rho)$ be a representation of $(A, \circ)$, and $(\mathfrak{A} = A \ltimes_{\rho^*} V^*, \bullet)$ be the semi-direct product of $(A, \circ)$ and $(V^*; \rho^*)$.
	Suppose that $S: V \to A$ is a balanced homomorphism of representations from $(V; \rho)$ to $(A; L_{\circ})$.
	Then $T: V \to A$ is an extended relative Rota-Baxter operator with extension $S$ on $(A, \circ)$ associated to $(V; \rho)$ if and only if $(r^T - \tau(r^T)) + (r^S + \tau(r^S))$ (resp. $(r^T - \tau(r^T)) - (r^S + \tau(r^S))$) is a solution of the JYBE in $(A \ltimes_{\rho^*} V^*, \bullet)$.
	In particular, $T$ is a relative Rota-Baxter operator of weight 0 associated to $(V; \rho)$ if and only if $r^T - \tau(r^T)$ is a skew-symmetric solution of the JYBE in $(A \ltimes_{\rho^*} V^*, \bullet)$.
\end{corollary}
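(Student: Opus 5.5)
The plan is to reduce the statement to Theorem~\ref{thm:lreq}, applied in the semi-direct product $(\mathfrak{A}=A\ltimes_{\rho^*}V^*,\bullet)$ with the candidate element $r=\Lambda\pm\Theta$, where
\begin{equation*}
	\Lambda:=r^T-\tau(r^T)\in \mathrm{Alt}^2(\mathfrak{A}), \qquad \Theta:=r^S+\tau(r^S)\in \mathrm{Sym}^2(\mathfrak{A}).
\end{equation*}
First we identify the associated operators. For $\Lambda$,
\begin{equation*}
	\Lambda_{+}=r^T_{+}-(\tau(r^T))_{+}=r^T_{+}+r^T_{-}=\mathcal{T},
\end{equation*}
using $(\tau(r))_{+}=-r_{-}$. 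For $\Theta$, we have $\Theta_{+}=r^S_{+}-r^S_{-}=\mathcal{S}$, exactly as observed in the proof of Lemma~\ref{lem:b2bsm}. For the ``resp.'' case, $r=\Lambda-\Theta$ has symmetric part $-\Theta$ with $(-\Theta)_+=-\mathcal{S}$. Since the defining identity \eqref{eq:exto} only involves $S(u)\circ S(v)$, an extended relative Rota-Baxter operator with extension $\mathcal{S}$ is the same as one with extension $-\mathcal{S}$. (Alternatively, $\Lambda-\Theta=-\tau(\Lambda+\Theta)$, but the sign of the skew part does matter for the JYBE, so I will rely on the extension-sign argument.)

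Second, we check the hypothesis of Theorem~\ref{thm:lreq}, namely that $\Theta$ is invariant. By Lemma~\ref{lem:b2bsm}, since $S$ is a balanced homomorphism of representations from $(V;\rho)$ to $(A;L_\circ)$, the map $\mathcal{S}=\Theta_+$ is a balanced homomorphism of representations from $(\mathfrak{A}^*;L_\bullet^*)$ to $(\mathfrak{A};L_\bullet)$. Lemma~\ref{lem:sinvb} then gives that $\Theta$ is invariant, and so is $-\Theta$.

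Third, we apply Theorem~\ref{thm:lreq} \ref{it:reqd1}$\Leftrightarrow$\ref{it:reqd2}: $\Lambda\pm\Theta$ solves the JYBE in $(\mathfrak{A},\bullet)$ if and only if $\mathcal{T}$ is an extended relative Rota-Baxter operator with extension $\pm\mathcal{S}$ on $(\mathfrak{A},\bullet)$ associated to $(\mathfrak{A}^*;L_\bullet^*)$. By Theorem~\ref{thm:eo2smp}, whose hypothesis that $S$ is a homomorphism of representations holds, this is equivalent to $T$ being an extended relative Rota-Baxter operator with extension $S$ associated to $(V;\rho)$.

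For the particular case, take $S=0$. The zero map is trivially a balanced homomorphism, $\Theta=0$, and an extended relative Rota-Baxter operator with extension $0$ is a relative Rota-Baxter operator of weight $0$. The only real obstacle is the sign bookkeeping in identifying $\Lambda_+=\mathcal{T}$ and $\Theta_+=\mathcal{S}$ under the paper's conventions $\langle r_+(a^*),b^*\rangle=-\langle a^*,r_-(b^*)\rangle=\langle r,a^*\otimes b^*\rangle$. I would verify this directly on $r^T=\sum_i T(e_i)\otimes e_i^*$, checking that it matches the formulas $\mathcal{T}(a^*+u)=-T(u)+T^*(a^*)$ and $\mathcal{S}(a^*+u)=S(u)+S^*(a^*)$ used earlier.
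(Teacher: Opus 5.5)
Your proposal is correct and follows the paper's own proof essentially step for step. The paper obtains invariance of $r^S+\tau(r^S)$ from Lemma~\ref{lem:b2bsm} and Lemma~\ref{lem:sinvb}, applies Theorem~\ref{thm:lreq} to pass to $\mathcal{T}$ with extension $\mathcal{S}$ on $\mathfrak{A}$, and concludes with Theorem~\ref{thm:eo2smp}. Your explicit checks that $\Lambda_+=\mathcal{T}$ and $\Theta_+=\mathcal{S}$, and that \eqref{eq:exto} is even in the extension, usefully fill in the ``resp.'' case that the paper leaves implicit.

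One small correction to your parenthetical, which is not load-bearing. $\mathbf{J}$ is homogeneous quadratic, so $\mathbf{J}(-r)=\mathbf{J}(r)$, and $\mathbf{J}(\tau(r))=\sigma_{13}(\mathbf{J}(r))$ by Corollary~\ref{coro:trr}. So neither the overall sign nor the sign of the skew part affects being a solution, and the alternative route via $\Lambda-\Theta=-\tau(\Lambda+\Theta)$ would also have worked.
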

\begin{proof}
	By Lemmas~\ref{lem:b2bsm} and \ref{lem:sinvb}, $r^S + \tau(r^S)$ is invariant.
	Applying Theorem~\ref{thm:lreq}, we get that $(r^T - \tau(r^T)) + (r^S + \tau(r^S))$ (resp. $(r^T - \tau(r^T)) - (r^S + \tau(r^S))$) is a solution of the JYBE in $(A \ltimes_{\rho^*} V^*, \bullet)$ if and only if $r_{+}^{T} + r_{-}^{T}: \mathfrak{A}^* \to \mathfrak{A}$ is an extended relative Rota-Baxter operator with extension $r_{+}^{S} - r_{-}^{S}$ on $(\mathfrak{A}, \bullet)$ associated to $(\mathfrak{A}^*; L_{\bullet}^*)$.
	By Theorem~\ref{thm:eo2smp}, the latter condition is equivalent to $T: V \to A$ being an extended relative Rota-Baxter operator with extension $S$ associated to $(V; \rho)$.
	Thus, we obtain the desired equivalence.
	Particular case follows by taking $S = 0$.
\end{proof}

\begin{corollary}\label{coro:eo2jy}
	Let $(A, \circ)$ be a Jordan algebra and  $(\mathfrak{A} = A \ltimes_{L_{\circ}^*} A^*, \bullet)$ be the semi-direct product of $(A, \circ)$ and $(A^*; L_{\circ}^*)$.
	Let $T: A \to A$ be a linear map.
	Then
 \begin{enumerate}[label=(\roman*)]
		\item\label{it:c1}
		      $T$ is an extended relative Rota-Baxter operator with extension $\id$ associated to $(A; L_{\circ})$ if and only if $r^T - \tau(r^T) + (r^\id + \tau(r^\id))$ (resp. $r^T - \tau(r^T) - (r^\id + \tau(r^\id))$) is a solution of the JYBE in $(A \ltimes_{L_{\circ}^*} A^*, \bullet)$.

		\item\label{it:c2}
		      $T$ is a Rota-Baxter operator of weight $\lambda$ if and only if $(r^T - \tau(r^T)) + \lambda r^\id$ (resp. $(r^T - \tau(r^T)) - \lambda \tau(r^\id)$)  is a solution of the JYBE in $(A \ltimes_{L_{\circ}^*} A^*, \bullet)$.
	\end{enumerate}
\end{corollary}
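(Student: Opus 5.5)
Both parts will come out of Corollary~\ref{cor:tsc}, specialized to $V = A$, $\rho = L_{\circ}$, so that $V^* = A^*$, $\rho^* = L_{\circ}^*$ and $\mathfrak{A} = A \ltimes_{L_{\circ}^*} A^*$. The only extra input needed is the standard fact that $S = \id$ (or a scalar multiple of it) is a balanced homomorphism of representations from $(A; L_{\circ})$ to itself.

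For part \ref{it:c1}, take $S = \id: A \to A$. Commutativity of $\circ$ gives $L_{\circ}(\id(x))y = x \circ y = y \circ x = L_{\circ}(\id(y))x$, so $\id$ is balanced. The identity $\id(x \circ y) = x \circ \id(y)$ is Eq.~\eqref{eq:ainv}, so $\id$ is also a homomorphism of representations. Corollary~\ref{cor:tsc} therefore applies directly with $r^S = r^{\id}$, and yields exactly the stated equivalence for both sign choices.

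For part \ref{it:c2}, I combine Lemma~\ref{lem:rb2eo} with the same corollary, now taking $S = \frac{\lambda}{2}\id$. This $S$ is again a balanced homomorphism of representations, by linearity.
\begin{itemize}
\item By Lemma~\ref{lem:rb2eo}, $T$ is a Rota-Baxter operator of weight $\lambda$ if and only if $T' := T + \frac{\lambda}{2}\id$ is an extended relative Rota-Baxter operator with extension $\frac{\lambda}{2}\id$ associated to $(A; L_{\circ})$.
\item By Corollary~\ref{cor:tsc}, this holds if and only if
\[
(r^{T'} - \tau(r^{T'})) \pm (r^{\lambda\id/2} + \tau(r^{\lambda\id/2}))
\]
is a solution of the JYBE in $\mathfrak{A}$.
\end{itemize}

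It remains to simplify these tensors. Since $T \mapsto r^T$ is linear, $r^{T'} = r^T + \frac{\lambda}{2} r^{\id}$ and $r^{\lambda\id/2} = \frac{\lambda}{2} r^{\id}$. For the $+$ sign,
\[
r^T - \tau(r^T) + \tfrac{\lambda}{2}\bigl(r^{\id} - \tau(r^{\id})\bigr) + \tfrac{\lambda}{2}\bigl(r^{\id} + \tau(r^{\id})\bigr) = (r^T - \tau(r^T)) + \lambda r^{\id}.
\]
For the $-$ sign the same computation gives $(r^T - \tau(r^T)) - \lambda \tau(r^{\id})$. These are the two tensors in the statement.

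The main thing to check is convention consistency: that $r^{T'}$ is the tensor assigned to $T'$ in the paper's identification $\Hom(V,A) \cong A \otimes V^*$ (here $r^{\id} = \sum_i e_i \otimes e_i^*$), and that the sign conventions relating $r_{\pm}$ to $\mathcal{T}$ and $\mathcal{S}$ in Theorem~\ref{thm:eo2smp} match those used in Corollary~\ref{cor:tsc}. Once these agree, the argument is purely formal.
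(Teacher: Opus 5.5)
Your proposal is correct and follows the paper's own proof: part (i) is Corollary~\ref{cor:tsc} with $S=\id$, and part (ii) combines Lemma~\ref{lem:rb2eo} (applied to $T+\frac{\lambda}{2}\id$ with extension $\frac{\lambda}{2}\id$) with Corollary~\ref{cor:tsc}, where your simplification of the tensors via linearity of $T\mapsto r^T$ is the step the paper leaves implicit. The convention check you flag needs no separate verification, because Corollary~\ref{cor:tsc} is already stated in terms of $r^T$, $\tau(r^T)$, $r^S$ and $\tau(r^S)$, so the sign conventions are built into it.
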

\begin{proof}
	 \ref{it:c1}.
	The conclusion follows from Corollary~\ref{cor:tsc} by taking $S = \id$.

	\ref{it:c2}.
	By Lemma~\ref{lem:rb2eo}, $T: A \to A$ is a Rota-Baxter operator of weight $\lambda$ if and only if $T + \frac{\lambda}{2}\id$ is an extended relative Rota-Baxter operator with extension $\frac{\lambda}{2}\id$ associated to $(A; L_{\circ})$.
	Therefore, the conclusion follows from Corollary~\ref{cor:tsc}.
\end{proof}

%%%%%%%%%%%%%%%%%%%%%%%%%%%%%%%%%%%%%%%%%%%%%%%%%%%%%%%%%%%%%%%%%%%%%%%%%%%%%%%%
%%%%%%%%%%%%%%%%%%%%%%%%%%%%%%%%%%%%%%%%%%%%%%%%%%%%%%%%%%%%%%%%%%%%%%%%%%%%%%%%
%%%%%%%%%%%%%%%%%%%%%%%%%%%%%%%%%%%%%%%%%%%%%%%%%%%%%%%%%%%%%%%%%%%%%%%%%%%%%%%%
\section{Factorizable Jordan D-bialgebras and quadratic Rota-Baxter Jordan algebras}\label{sec:quadrbj}
In this section, we introduce a special class of quasi-triangular Jordan D-bialgebras, called a factorizable Jordan D-bialgebra, and show that the Drinfeld classical double of any Jordan D-bialgebra naturally carries a factorizable Jordan D-bialgebra structure.
We also introduce quadratic Rota-Baxter Jordan algebras and establish a one-to-one correspondence between factorizable Jordan D-bialgebras and quadratic Rota-Baxter Jordan algebras of nonzero weights.
Moreover, we demonstrate that every quadratic Rota-Baxter Jordan algebra of zero weight gives rise to a triangular Jordan D-bialgebra. 

%%%%%%%%%%%%%%%%%%%%%%%%%%%%%%%%%%%%%%%%%%%%%%%%%%%%%%%%%%%%%%%%%%%%%%%%%%%%%%%%
\subsection{Factorizable Jordan D-bialgebras}

\begin{definition}
	A quasi-triangular Jordan D-bialgebra $(A,\circ, \Delta_{r})$ is called \textbf{factorizable} if $r_{+} - r_{-}: A^* \to A$ is a linear isomorphism of vector spaces.
\end{definition}

\begin{proposition}\label{prop:factorJba}
	Let $(A, \circ)$ be a Jordan algebra and $r \in A \otimes A$.
	Then $(A, \circ, \Delta_r)$ is a factorizable Jordan D-bialgebra if and only if $(A, \circ, \Delta_{\tau(r)})$ is a factorizable Jordan D-bialgebra.
\end{proposition}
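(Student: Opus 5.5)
By Corollary~\ref{coro:trr}, $(A, \circ, \Delta_r)$ is a quasi-triangular Jordan D-bialgebra if and only if $(A, \circ, \Delta_{\tau(r)})$ is one. The reason is that $\mathbf{J}(\tau(r)) = \sigma_{13}(\mathbf{J}(r))$ and that $r$ and $\tau(r)$ share the same symmetric part $\Theta$. So the only remaining task is to compare the factorizability conditions, namely to show that $r_{+} - r_{-}$ is a linear isomorphism exactly when $(\tau(r))_{+} - (\tau(r))_{-}$ is.

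For this I would use the identities recorded at the start of the subsection on operator forms: $(\tau(r))_{+} = r_{+}^* = -r_{-}$. Applying the same identity to $\tau(r)$ in place of $r$ (using $\tau(\tau(r)) = r$) gives $r_{+} = (\tau(\tau(r)))_{+} = -(\tau(r))_{-}$, so $(\tau(r))_{-} = -r_{+}$. Therefore
\begin{equation*}
	(\tau(r))_{+} - (\tau(r))_{-} = -r_{-} + r_{+} = r_{+} - r_{-}.
\end{equation*}
Equivalently, in the notation $r = \Lambda + \Theta$, both sides equal $2\Theta_{+}$, since $\tau(r) = -\Lambda + \Theta$ has the same symmetric part.

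Since the two maps coincide, one is an isomorphism if and only if the other is. Combining this with the quasi-triangular equivalence gives the claim. The only point needing care is the sign bookkeeping in $(\tau(r))_{-} = -r_{+}$. I would verify it directly from the defining pairing $\langle r_{+}(a^*), b^*\rangle = -\langle a^*, r_{-}(b^*)\rangle = \langle r, a^*\otimes b^*\rangle$ rather than rely on memory. No real obstacle is expected.
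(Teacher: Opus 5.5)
Your proof is correct and follows the same route as the paper, which simply cites Corollary~\ref{coro:trr}. You also make explicit the step the paper leaves implicit, namely that $(\tau(r))_{+} - (\tau(r))_{-} = r_{+} - r_{-} = 2\Theta_{+}$; your sign bookkeeping $(\tau(r))_{+} = -r_{-}$ and $(\tau(r))_{-} = -r_{+}$ checks out against the defining pairing.
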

\begin{proof}
	It follows from Corollary~\ref{coro:trr}.
\end{proof}

Consider the linear map
\begin{equation*}
	A^* \xrightarrow{\;\;\;\; r_{+} \oplus r_{-}\;\;\;\;} A \oplus A \xrightarrow{(a, b) \mapsto a + b} A.
\end{equation*}
The following result justifies the terminology of a factorizable Jordan D-bialgebra.
\begin{proposition}\label{prop:fpt}
	Let $(A, \circ, \Delta_{r})$ be a factorizable Jordan D-bialgebra.
	Then $\mathrm{Im}(r_{+} \oplus r_{-})$ is a Jordan subalgebra of the direct sum Jordan algebra $A \oplus A$,
	which is isomorphic to the Jordan algebra $(A^*, \cdot_{r})$, where $\cdot_{r}: A^* \otimes A^* \to A^*$ is defined by Eq.~\eqref{eq:pcbdr}.
	Moreover, any $x \in A$ has a unique decomposition
	$x = x_{+} - x_{-}$ with $(x_{+}, x_{-}) \in \mathrm{Im}(r_{+} \oplus r_{-})$.
\end{proposition}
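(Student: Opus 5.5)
The plan is to apply Theorem~\ref{thm:lreq} directly. Since $(A,\circ,\Delta_r)$ is quasi-triangular, $r$ solves the JYBE and its symmetric part $\Theta$ is invariant. By Theorem~\ref{thm:lreq}~\ref{it:reqd3}, $(A^*,\cdot_r)$ is then a Jordan algebra, and both $r_{+}$ and $r_{-}$ are homomorphisms of Jordan algebras from $(A^*,\cdot_r)$ to $(A,\circ)$. It follows that the map $r_{+}\oplus r_{-}: A^*\to A\oplus A$, $a^*\mapsto (r_{+}(a^*), r_{-}(a^*))$, is a homomorphism of Jordan algebras into the direct sum Jordan algebra, where the product is componentwise:
\begin{equation*}
(r_{+}\oplus r_{-})(a^*\cdot_r b^*) = \big(r_{+}(a^*)\circ r_{+}(b^*),\, r_{-}(a^*)\circ r_{-}(b^*)\big) = (r_{+}\oplus r_{-})(a^*)\bullet (r_{+}\oplus r_{-})(b^*).
\end{equation*}
The image of a homomorphism is closed under the product, so $\mathrm{Im}(r_{+}\oplus r_{-})$ is a Jordan subalgebra of $A\oplus A$.

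To get the isomorphism with $(A^*,\cdot_r)$, I will check that $r_{+}\oplus r_{-}$ is injective. If $r_{+}(a^*)=0$ and $r_{-}(a^*)=0$, then $(r_{+}-r_{-})(a^*)=0$. Factorizability says $r_{+}-r_{-}$ is a linear isomorphism, so $a^*=0$. Thus $r_{+}\oplus r_{-}$ is a bijective homomorphism onto its image, which gives the required isomorphism of Jordan algebras.

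For the decomposition, take $x\in A$ and set $a^*:=(r_{+}-r_{-})^{-1}(x)$, $x_{+}:=r_{+}(a^*)$ and $x_{-}:=r_{-}(a^*)$. Then $x=x_{+}-x_{-}$ and $(x_{+},x_{-})\in\mathrm{Im}(r_{+}\oplus r_{-})$. For uniqueness, suppose $x=y_{+}-y_{-}$ with $(y_{+},y_{-})=(r_{+}(b^*),r_{-}(b^*))$. Then $(r_{+}-r_{-})(b^*)=x$, so injectivity of $r_{+}-r_{-}$ forces $b^*=a^*$, and the pair is the same.

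None of these steps is a real obstacle; all the substance is already contained in Theorem~\ref{thm:lreq}. The only point needing care is confirming that \ref{it:reqd3} there gives both $r_{+}$ and $r_{-}$ as homomorphisms simultaneously, with the same product $\cdot_r$. It does, because the ``resp.'' in that statement arises from the equivalence \ref{it:eo2h2}$\Leftrightarrow$\ref{it:eo2h3} in Lemma~\ref{lem:iphr}, and both conditions there refer to the same product $\cdot_T=\cdot_r$.
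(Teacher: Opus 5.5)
Your proposal is correct and follows the paper's own proof. Theorem~\ref{thm:lreq} makes $r_{+}$ and $r_{-}$ homomorphisms from $(A^*,\cdot_r)$, so $\mathrm{Im}(r_{+}\oplus r_{-})$ is a subalgebra, and invertibility of $I=r_{+}-r_{-}$ yields injectivity (hence the isomorphism) together with the unique decomposition $x=r_{+}I^{-1}(x)-r_{-}I^{-1}(x)$; you simply spell out the injectivity step that the paper leaves implicit.
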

\begin{proof}
	By Theorem~\ref{thm:lreq}, both $r_{+}$ and $r_{-}$ are Jordan algebra homomorphisms.
	Therefore, $\mathrm{Im}(r_{+} \oplus r_{-})$ is a Jordan subalgebra of the direct sum Jordan algebra $A \oplus A$.
	Since $I := r_{+} - r_{-}: A^* \to A$ is a linear isomorphism of vector spaces, it follows that the Jordan algebra $\mathrm{Im}(r_{+} \oplus r_{-})$ is isomorphic to the Jordan algebra $(A^*, \cdot_r)$.
	Moreover, for any $x \in A$, we have
	\begin{equation*}
		r_{+}I^{-1}(x) - r_{-}I^{-1}(x) = (r_{+} - r_{-}) I^{-1}(x)= x,
	\end{equation*}
	which shows that $x = x_{+} - x_{-}$ with $x_{+} = r_{+}I^{-1}(x)$ and $x_{-} = r_{-}I^{-1}(x)$.
	The uniqueness also follows from the fact that $I= r_{+} - r_{-}: A^* \to A$ is a linear isomorphism of vector spaces.
\end{proof}

Let $(A, \circ_A, \Delta)$ be an arbitrary Jordan D-bialgebra and $\circ_{A^*}: A^* \otimes A^* \to A^*$ be the linear dual of $\Delta$. 
Then the direct sum $\mathfrak{D}: = A \oplus A^*$ carries a Jordan algebra structure $\circ_{\mathfrak{D}}$ given by
\begin{equation*}
	(x+a^*) \circ_{\mathfrak{D}} (y+b^*) = x \circ_A y + L_{\circ_{A^*}}^{*}(a^*) y +L_{\circ_{A^*}}^{*}(b^*) x+L_{\circ_A}^{*}(x) b^*+L_{\circ_A}^{*}(y) a^* + a^* \circ_{A^*} b^*,
\end{equation*}
for all $x,y \in A$ and $a^*, b^* \in A^*$. 
We call $(\mathfrak{D}, \circ_{\mathfrak{D}})$ the \textbf{Drinfeld classical double} of the Jordan D-bialgebra $(A, \circ_A, \Delta)$. 
Moreover, $(\mathfrak{D}, \circ_{\mathfrak{D}}, \mathcal{B}_{d})$ is a quadratic Jordan algebra, where $\mathcal{B}_{d}$ is defined by Eq.~\eqref{eq:abm}~\cite{hou2022new}.

\begin{theorem}
	Let $(A, \circ_A, \Delta)$ be a Jordan D-bialgebra and $(\mathfrak{D}, \circ_{\mathfrak{D}})$ be the Drinfeld classical double of $(A, \circ_A, \Delta)$. 
	Let $\{e_{1}, e_{2}, \ldots, e_{n}\}$ be a basis of $A$ and $\{e_{1}^{*}, e_{2}^{*}, \ldots ,$ $e_{n}^{*}\}$ be the dual basis. 
	Set
	\begin{equation*}
		r=\sum_{i}^{n} e_{i} \otimes e_{i}^{*} \in A \otimes A^{*} \subset \mathfrak{D} \otimes \mathfrak{D}.
	\end{equation*}
	Then $\left(\mathfrak{D},\circ_{\mathfrak{D}},\Delta_{r} \right)$ is a factorizable Jordan D-bialgebra with $\Delta_{r}: \mathfrak{D} \to \mathfrak{D} \otimes \mathfrak{D}$ defined by Eq.~\eqref{eq:jcbd}.
\end{theorem}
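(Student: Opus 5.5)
To prove the theorem we need three things: that $r$ solves the JYBE in $(\mathfrak{D}, \circ_{\mathfrak{D}})$, that the symmetric part of $r$ is invariant, and that $r_{+} - r_{-}: \mathfrak{D}^* \to \mathfrak{D}$ is a linear isomorphism. Theorem~\ref{CJBforQuasiandTri} then gives the quasi-triangular structure, and the definition gives factorizability.

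\emph{Step 1: the symmetric part is invariant and nondegenerate.} Write $\Theta = \frac12 (r + \tau(r)) = \frac12\sum_i (e_i \otimes e_i^* + e_i^* \otimes e_i)$. This is (one half of) the tensor dual to the bilinear form $\mathcal{B}_d$. More precisely, identify $\mathfrak{D}^* \cong A^* \oplus A$ by the standard pairing. Then $\Theta_{+} = \frac12 I_{\mathcal{B}_d}$, up to the obvious identification. Since $(\mathfrak{D}, \circ_{\mathfrak{D}}, \mathcal{B}_d)$ is quadratic, the computation in Proposition~\ref{dualdj:x} shows that $I_{\mathcal{B}_d}$ intertwines $L_{\circ_{\mathfrak{D}}}^*$ and $L_{\circ_{\mathfrak{D}}}$. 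So $\Theta_{+}$ is a homomorphism of representations from $(\mathfrak{D}^*; L^*_{\circ_{\mathfrak{D}}})$ to $(\mathfrak{D}; L_{\circ_{\mathfrak{D}}})$, and Lemma~\ref{lem:sinvb} gives that $\Theta$ is invariant. Alternatively, one can check $(\id\otimes L(z) - L(z)\otimes \id)\Theta = 0$ directly by pairing with $a^*\otimes b^*$ and using invariance of $\mathcal{B}_d$. Moreover $r_{+} - r_{-} = 2\Theta_{+} = I_{\mathcal{B}_d}$, which is bijective because $\mathcal{B}_d$ is nondegenerate. This settles factorizability once the quasi-triangular structure is established.

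\emph{Step 2: $r$ solves the JYBE.} This is the main computational step. I would compute $r_{+}$ explicitly: for $\xi = a^* + x \in \mathfrak{D}^* = A^*\oplus A$, we have $r_{+}(\xi) = \sum_i \langle e_i, \xi\rangle e_i$ (up to the chosen pairing conventions), i.e.\ $r_{+}$ is the projection onto the $A$-component. Similarly, $-r_{-}$ is the projection onto the $A^*$-component. By the proof of Theorem~\ref{thm:lreq} (\ref{it:reqd1}$\Leftrightarrow$\ref{it:reqd2}), since $\Theta$ is invariant, $\mathbf{J}(r)$ paired with $\xi\otimes\eta\otimes\zeta$ equals $\langle r_{+}(\xi)\circ r_{+}(\eta) - r_{+}(\xi\cdot_r \eta), \zeta\rangle$. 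It therefore suffices to check that $r_{+}$ is multiplicative from $(\mathfrak{D}^*, \cdot_r)$ to $\mathfrak{D}$. Transport everything via $I_{\mathcal{B}_d}$ to $\mathfrak{D}$ itself, using Proposition~\ref{prop:ajo2s} and Corollary~\ref{coro:rexoqs}. In that picture, $r_{\pm}$ correspond to the projections $P_A$ and $-P_{A^*}$ of $\mathfrak{D}$ onto its two summands. Furthermore, $\Lambda_{+} I^{-1}_{\mathcal{B}_d}$ corresponds to $T = \frac12(P_A - P_{A^*})$, and $\Theta_{+}I^{-1}_{\mathcal{B}_d}$ corresponds to $S = \frac12\id$. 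By Corollary~\ref{coro:rexoqs} we must show that $T$ is an extended relative Rota-Baxter operator with extension $\frac12\id$ on $(\mathfrak{D}, \circ_{\mathfrak{D}})$ associated to $(\mathfrak{D}; L)$. By Lemma~\ref{lem:rb2eo} with $\lambda = -1$, this is equivalent to $T + \frac12\id = P_A$ being a Rota-Baxter operator of weight $-1$ (the sign will be fixed carefully, possibly giving $-P_{A^*}$ instead). This reduces to checking, for $u = x + a^*$ and $v = y + b^*$,
\begin{equation*}
	P_A(u)\circ_{\mathfrak{D}} P_A(v) = P_A\big(P_A(u)\circ_{\mathfrak{D}} v + u \circ_{\mathfrak{D}} P_A(v) - u\circ_{\mathfrak{D}} v\big).
\end{equation*}
This holds because $A$ and $A^*$ are both subalgebras of $\mathfrak{D}$ and $\mathfrak{D} = A \oplus A^*$ as vector spaces. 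The standard argument: when $u, v$ both lie in $A$ or both in $A^*$, the identity reduces to the subalgebra property. In the mixed case $u = x$, $v = b^*$, the right side is $P_A(x\circ b^* + 0 - x \circ b^*) = 0$, which equals the left side $x\circ 0$.

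\emph{Expected obstacle.} The real difficulty is bookkeeping of signs and conventions. This includes the identification $\mathfrak{D}^*\cong A^*\oplus A$, which projection $r_{+}$ actually is, the factor $\frac12$, and whether the weight is $+1$ or $-1$. I would first pin these down on a single basis element. If the transport through Corollary~\ref{coro:rexoqs} becomes awkward, the fallback is to compute $\mathbf{J}(r) = r_{12}\circ r_{13} - r_{12}\circ r_{23} + r_{13}\circ r_{23}$ directly in the basis. The $A^*$-products come from the cobracket, so the first and third terms are expressed through structure constants of $\Delta$ and of $\circ_A$. One then cancels them against the middle term using the formula for $\circ_{\mathfrak{D}}$ on mixed products $e_i \circ_{\mathfrak{D}} e_j^*$.
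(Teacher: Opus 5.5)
Your proposal is correct and follows essentially the paper's route. Through the quadratic form $\mathcal{B}_d$ and Corollaries~\ref{coro:exoqs}/\ref{coro:rexoqs}, the JYBE for $r$ reduces to $T=\pm\frac{1}{2}(P_A-P_{A^*})$ being an extended relative Rota-Baxter operator with extension $\frac{1}{2}\id$, and $r_{+}-r_{-}=I_{\mathcal{B}_d}$ gives factorizability.

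One small correction: in fact $r_{+}(a^*+x)=a^*$, so $r_{+}I_{\mathcal{B}_d}^{-1}$ is the projection onto $A^*$, and the paper's operator is $T=\frac12(P_{A^*}-P_A)$. This is harmless, since by your subalgebra argument both projections are Rota-Baxter operators of weight $-1$; that argument also supplies the verification the paper leaves unstated.
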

\begin{proof}
	By \cite{hou2022new}, $\left(\mathfrak{D},\circ_{\mathfrak{D}}, \mathcal{B}_{d} \right)$ is a quadratic Jordan algebra.
	Let $I_{\mathcal{B}_{d}}: \mathfrak{D}^* \to \mathfrak{D}$ be the induced linear isomorphism by $\mathcal{B}_{d}$ and $T: \mathfrak{D} \to \mathfrak{D}$ be a linear map defined by
	\begin{equation*}
		T(x + a^*) = \frac{1}{2}(-x + a^*), \;\; \forall x \in A, \; a^* \in A^*.
	\end{equation*}
	Then, for all $x, y \in A$ and $a^*, b^* \in A^*$,
	\begin{equation*}
		\mathcal{B}_{d}(T(x + a^*), y + b^*) = -\frac{1}{2}\langle b^*, x\rangle + \frac{1}{2}\langle a^*, y\rangle = - \mathcal{B}_{d}(x + a^*, T(y + b^*)).
	\end{equation*}
	Thus, $T$ is skew-adjoint with respect to $\mathcal{B}_{d}$.
	Furthermore, $T$ is an extended relative Rota-Baxter operator with extension $S = \frac{1}{2}\id$ on $(\mathfrak{D}, \circ_{\mathfrak{D}})$ associated to $(\mathfrak{D}; L_{\circ_{\mathfrak{D}}})$.
	Therefore, by Corollary~\ref{coro:exoqs}, the 2-tensor form of $T I_{\mathcal{B}_d} +  SI_{\mathcal{B}_d}$  is a solution of the JYBE in $(\mathfrak{D},\circ_{\mathfrak{D}})$.
	Note that
	\begin{equation*}
		r_{+}(a^* + x) = a^* = (TI_{\mathcal{B}_d} + SI_{\mathcal{B}_d})(a^* + x).
	\end{equation*}
	Hence, $r$ is a solution of the JYBE in $(\mathfrak{D}, \circ_{\mathfrak{D}})$.
	Moreover, $r_{+} - r_{-} = I_{\mathcal{B}_d}$.
	Hence, $\left(\mathfrak{D},\circ_{\mathfrak{D}},\Delta_{r} \right)$ is a factorizable Jordan D-bialgebra.
\end{proof}

%%%%%%%%%%%%%%%%%%%%%%%%%%%%%%%%%%%%%%%%%%%%%%%%%%%%%%%%%%%%%%%%%%%%%%%%%%%%%%%%
\subsection{Quadratic Rota-Baxter Jordan algebras}

\begin{definition}
	The triple $(A, \circ, \mathcal{B}, P)$ is called a \textbf{quadratic Rota-Baxter Jordan algebra} of weight $\lambda$ if $(A, \circ, \mathcal{B})$ is a quadratic Jordan algebra and $P$ is a Rota-Baxter operator on $(A, \circ )$ of weight $\lambda$ satisfying the following compatibility condition:
	\begin{equation}
		\mathcal{B}(x, P(y)) + \mathcal{B}(P(x), y) + \lambda \mathcal{B}(x, y) = 0, \;\; \forall x, y \in A. \label{eq:qrbp}
	\end{equation}
\end{definition}

\begin{proposition}\label{prop:PandtauP}
	Let $( A, \circ, \mathcal{B})$ be a quadratic Jordan algebra and $P: A \to A$ be a linear map. Then $( A, \circ, \mathcal{B}, P)$ is a quadratic Rota-Baxter Jordan algebra of weight $\lambda$ if and only if $( A, \circ, \mathcal{B}, \tilde{P} := - \lambda \id - P)$ is a quadratic Rota-Baxter Jordan algebra of weight $\lambda$.
\end{proposition}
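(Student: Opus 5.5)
The argument has two independent parts: the Rota-Baxter property of $\tilde{P}$ and the compatibility condition~\eqref{eq:qrbp} for $\tilde{P}$. The first is the standard fact already used in the proof of the proposition relating relative Rota-Baxter operators to the semi-direct product. There it was noted that $P$ is a Rota-Baxter operator of weight $\lambda$ on $(A,\circ)$ if and only if $-\lambda\id - P$ is.

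For completeness I would verify this directly. Expanding with $\tilde{P} = -\lambda\id - P$ gives
\begin{equation*}
\tilde{P}(x)\circ\tilde{P}(y) = \lambda^2 x\circ y + \lambda P(x)\circ y + \lambda x\circ P(y) + P(x)\circ P(y).
\end{equation*}
Next, observe that
\begin{equation*}
\tilde{P}(x)\circ y + x\circ \tilde{P}(y) + \lambda x\circ y = -\big(P(x)\circ y + x\circ P(y) + \lambda x\circ y\big).
\end{equation*}
Applying $\tilde{P}$ to this yields
\begin{equation*}
\lambda\big(P(x)\circ y + x\circ P(y) + \lambda x\circ y\big) + P\big(P(x)\circ y + x\circ P(y) + \lambda x\circ y\big).
\end{equation*}
Subtracting, the difference between the two sides of the Rota-Baxter identity for $\tilde{P}$ equals
\begin{equation*}
P(x)\circ P(y) - P\big(P(x)\circ y + x\circ P(y) + \lambda x\circ y\big),
\end{equation*}
which is exactly the defect for $P$. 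Hence one vanishes identically if and only if the other does.

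For the compatibility condition, use bilinearity of $\mathcal{B}$:
\begin{equation*}
\mathcal{B}(x,\tilde{P}(y)) + \mathcal{B}(\tilde{P}(x),y) + \lambda\mathcal{B}(x,y) = -2\lambda\mathcal{B}(x,y) - \mathcal{B}(x,P(y)) - \mathcal{B}(P(x),y) + \lambda\mathcal{B}(x,y).
\end{equation*}
The right-hand side equals $-\big(\mathcal{B}(x,P(y)) + \mathcal{B}(P(x),y) + \lambda\mathcal{B}(x,y)\big)$. So~\eqref{eq:qrbp} holds for $\tilde{P}$ if and only if it holds for $P$.

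Combining the two parts gives the equivalence; the converse direction is also immediate because $-\lambda\id - \tilde{P} = P$. There is no real obstacle here; the only care needed is with signs in the expansions above.
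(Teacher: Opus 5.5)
Your proof is correct and follows the same route as the paper. The paper splits the argument the same way: it transfers the Rota-Baxter property to $\tilde{P} = -\lambda\,\mathrm{id} - P$, calling this obvious, whereas you verify explicitly that the two Rota-Baxter defects coincide. It then observes that the left-hand side of~\eqref{eq:qrbp} simply changes sign under $P \mapsto \tilde{P}$.
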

\begin{proof}
	Obviously, $P$ is a Rota-Baxter operator of weight $\lambda$ if and only if $\tilde{P}$ is a Rota-Baxter operator of weight $\lambda$.
	Moreover, for all $x, y \in A$, we have
	\begin{equation*}
		\mathcal{B}(\tilde{P}(x), y)+\mathcal{B}(x,\tilde{P}(y))+\lambda \mathcal{B}(x, y)=-\mathcal{B}(P(x), y)-\mathcal{B}(x, P(y))-\lambda \mathcal{B}(x, y).
	\end{equation*}
	Thus, $P$ satisfies Eq.~\eqref{eq:qrbp} if and only if $\tilde{P}$ satisfies Eq.~\eqref{eq:qrbp}.
	The proof is completed.
\end{proof}

\begin{proposition}\label{exrbfna}
	Let $(A, \circ)$ be a Jordan algebra, $P$ be a Rota-Baxter operator of weight $\lambda$ on $(A, \circ)$ and $(A \ltimes_{L_{\circ}^{*}} A^{*}, \bullet)$ be the semi-direct product of $(A, \circ)$ and $(A^*; L_{\circ}^*)$. 
	Then $(A \ltimes_{L_{\circ}^{*}} A^{*}, \bullet , \mathcal{B}_d, P+\tilde{P}^{*})$ is a quadratic Rota-Baxter Jordan algebra of weight $\lambda$, where $\mathcal{B}_d$ is defined by Eq.~\eqref{eq:abm} and $\tilde{P} := - \lambda \id - P$.
\end{proposition}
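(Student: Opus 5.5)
Write $\mathfrak{A} = A \ltimes_{L_{\circ}^{*}} A^{*}$ and $Q := P + \tilde{P}^{*}$, so that $Q(x + a^*) = P(x) + \tilde{P}^*(a^*) = P(x) - \lambda a^* - P^*(a^*)$. By Proposition~\ref{GouzaoQJA}, $(\mathfrak{A}, \bullet, \mathcal{B}_d)$ is already a quadratic Jordan algebra. It therefore remains to check two things: the compatibility condition~\eqref{eq:qrbp} for $Q$ with respect to $\mathcal{B}_d$, and that $Q$ is a Rota-Baxter operator of weight $\lambda$ on $(\mathfrak{A}, \bullet)$.

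\emph{Compatibility.} This is a one-line pairing computation. For $x, y \in A$ and $a^*, b^* \in A^*$ we have
\begin{equation*}
	\mathcal{B}_d(x + a^*, Q(y + b^*)) = \langle a^*, P(y)\rangle - \langle \lambda b^* + P^*(b^*), x\rangle,
\end{equation*}
and symmetrically
\begin{equation*}
	\mathcal{B}_d(Q(x + a^*), y + b^*) = \langle b^*, P(x)\rangle - \langle \lambda a^* + P^*(a^*), y\rangle.
\end{equation*}
Adding these to $\lambda\mathcal{B}_d = \lambda\langle a^*, y\rangle + \lambda\langle b^*, x\rangle$, the $P$-terms cancel against the $P^*$-terms and the $\lambda$-terms cancel. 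Hence \eqref{eq:qrbp} holds.

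\emph{Rota-Baxter identity.} We expand
\begin{equation*}
	Q(X) \bullet Q(Y) - Q\big(Q(X) \bullet Y + X \bullet Q(Y) + \lambda X \bullet Y\big)
\end{equation*}
for $X = x + a^*$ and $Y = y + b^*$, and split it into its $A$-component and $A^*$-component. Since $A^*$ is an ideal with trivial product, the $A$-component is exactly the Rota-Baxter identity for $P$ on $(A,\circ)$, which vanishes by hypothesis. The $A^*$-component is bilinear, and it is linear in the dual variables, so it splits into a part involving $(x, b^*)$ and a symmetric part involving $(y, a^*)$. By symmetry it suffices to treat the $(x, b^*)$ part.

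This part equals
\begin{equation*}
	L_\circ^*(P(x))\tilde{P}^*(b^*) - \tilde{P}^*\big(L_\circ^*(P(x))b^* + L_\circ^*(x)\tilde{P}^*(b^*) + \lambda L_\circ^*(x)b^*\big).
\end{equation*}
We pair it with an arbitrary $z \in A$ and move all operators to the $A$ side, using $\langle L_\circ^*(u)c^*, z\rangle = \langle c^*, u\circ z\rangle$. The pairing becomes
\begin{equation*}
	\langle b^*, \tilde{P}(P(x)\circ z) - P(x)\circ \tilde{P}(z) - \tilde{P}(x\circ\tilde{P}(z)) - \lambda\,\tilde{P}(x\circ z)\rangle
\end{equation*}
(with appropriate care about where $\tilde{P}$ sits). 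Now substitute $\tilde{P} = -\lambda\id - P$ and expand. The $\lambda^2$ and $\lambda$-linear terms should cancel. What remains is
\begin{equation*}
	-\big(P(x)\circ P(z) - P(P(x)\circ z + x\circ P(z) + \lambda x\circ z)\big),
\end{equation*}
which vanishes by the Rota-Baxter identity for $P$.

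The main obstacle I expect is the careful bookkeeping in this dual component: getting the signs and the order of adjoints right, and confirming that the leftover terms really assemble into exactly the Rota-Baxter identity for $P$ rather than for $\tilde{P}$. If this goes astray, I would use the equivalent fact that $\tilde{P}$ is itself a Rota-Baxter operator of weight $\lambda$, and arrange the leftover terms to match whichever of the two identities they fit.
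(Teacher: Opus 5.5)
Your proposal is correct and follows the paper's proof: quadraticity comes from Proposition~\ref{GouzaoQJA}, the same pairing computation gives \eqref{eq:qrbp}, and you actually carry out componentwise the Rota-Baxter check that the paper only calls ``easy''. There is one slip in that check: the last term of your paired expression should be $-\lambda\, x\circ\tilde{P}(z)$, since $\langle \tilde{P}^*(\lambda L_\circ^*(x)b^*), z\rangle = \lambda\langle b^*, x\circ\tilde{P}(z)\rangle$, not $-\lambda\tilde{P}(x\circ z)$. With this correction the $\lambda$-terms do cancel, and the remainder is $+\big(P(x)\circ P(z) - P(P(x)\circ z + x\circ P(z) + \lambda x\circ z)\big)$, which vanishes as you say.
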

\begin{proof}
	By Proposition \ref{GouzaoQJA}, $( A \ltimes_{L_{\circ}^{*}} A^{*}, \bullet , \mathcal{B}_d)$ is a quadratic Jordan algebra. 	
	It is easy to show that $P+\tilde{P}^{*}$ is a Rota-Baxter operator of weight $\lambda$ on Jordan algebra $(A \ltimes_{L_{\circ}^{*}} A^{*}, \bullet)$. 
	For all $x, y \in A, \; a^{*}, b^{*} \in A^{*}$, we have
	\begin{align*}
		&\mathcal{B}_{d}( P(x)+\tilde{P}^*{(a^{*})}, y+b^{*}) + \mathcal{B}_{d}( x+a^{*} , P(y)+\tilde{P}^*{(b^{*})}) + \lambda\mathcal{B}_{d}(x+a^{*}, y+b^{*}) \\
		=\; & \langle b^{*}, P(x) \rangle + \langle \tilde{P}^*{(a^{*})}, y \rangle + \langle \tilde{P}^*{(b^{*})}, x \rangle + \langle a^{*}, P(y) \rangle + \lambda\langle b^{*}, x \rangle + \lambda\langle a^{*}, y \rangle \\
		=\; & \langle b^{*}, P(x) \rangle + \langle a^{*}, \tilde{P}{(y)} \rangle + \langle b^{*}, \tilde{P}{(x)} \rangle + \langle a^{*}, P(y) \rangle + \lambda\langle b^{*}, x \rangle + \lambda\langle a^{*}, y \rangle = 0.
	\end{align*}
	Thus, $P+\tilde{P}^{*}$ satisfies Eq.~\eqref{eq:qrbp} and $(A \ltimes_{L_{\circ}^{*}} A^{*}, \bullet, \mathcal{B}_d, P+\tilde{P}^{*})$ is a quadratic Rota-Baxter Jordan algebra of weight $\lambda$.
\end{proof}

\begin{example}\label{RBJordan2}
	Let $(A, \circ)$ be the 3-dimensional Jordan algebra with a basis $\{e_{1}, e_{2}, e_{3}\}$ in Example~\ref{ex:JordanXY} (b). 
	Define a linear map $P: A \to A$ by
	\begin{align*}
		P(e_1) = 0, \quad P(e_2) = e_2+ e_3, \quad P(e_3) = -2(e_2+ e_3).
	\end{align*}
	Then $P$ is a Rota-Baxter operator of weight $1$ on $(A, \circ)$.
	Let $\{e_1^*, e_2^*, e_3^*\}$ be the dual basis of $\{e_1, e_2, e_3\}$.
	Then, by Proposition~\ref{exrbfna}, $(A \ltimes_{L_{\circ}^{*}} A^{*}, \bullet, \mathcal{B}_d, P+\tilde{P}^{*})$ is a quadratic Rota-Baxter Jordan algebra of weight $1$, whose
	non-zero product of $\bullet$ is given by
	\begin{align*}
		e_{2} \bullet e_{2} &= e_{1}, \; 
		&&
		e_{3} \bullet e_{3} = -e_{1}, \;
		&&
		e_{1} \bullet e_{i} = e_{i} = e_{i} \bullet e_{1}, \;
		&& e_2 \bullet e_1^* = e_2^* = e_1^* \bullet e_2,  \\
		e_2 \bullet e_2^* &= e_1^* = e_2^* \bullet e_2, \;
		&& e_3 \bullet e_1^* = -e_3^* = e_1^* \bullet e_3, \;
		&& e_3 \bullet e_3^* = e_1^* = e_3^* \bullet e_3, \;
		&& e_1 \bullet e_i^* = e_i^* = e_i^* \bullet e_1,
	\end{align*}
	for all $i=1,2,3$, where $\mathcal{B}_d$ is defined by
	\begin{equation*}
		\mathcal{B}_d(e_i, e_j) = \mathcal{B}_d(e_i^*, e_j^*) = 0, \quad \mathcal{B}_d(e_i^*, e_j) = \mathcal{B}_d(e_i, e_j^*) = \delta_{ij}, \;\; \forall i, j=1, 2, 3,
	\end{equation*}
	and $\tilde{P}^{*}$ is given by 
	\begin{equation*}
		\tilde{P}^{*}(e_1^{*}) = - e_1^{*}, \quad \tilde{P}^{*}(e_2^{*}) = -2e_2^{*}+ 2e_3^{*}, \quad \tilde{P}^{*}(e_3^{*}) = - e_2^{*} + e_3^{*}.
	\end{equation*}
\end{example}

The following result demonstrates that a quadratic Rota-Baxter Jordan algebra of zero weight yields a triangular Jordan D-bialgebra, while a nonzero weight one yields a factorizable Jordan D-bialgebra.
\begin{theorem}\label{thm:qrbj2qtjb}
	Let $(A, \circ, \mathcal{B}, P)$ be a quadratic Rota-Baxter Jordan algebra of weight $\lambda$ and $I_\mathcal{B}: A^* \to A$ be the induced linear isomorphism by $\mathcal{B}$.
	Let $r\in A \otimes A$ be the corresponding 2-tensor form of $P I_\mathcal{B}$.
	Then $(A, \circ, \Delta_r)$ is a quasi-triangular Jordan D-bialgebra with $\Delta_{r}$ given by Eq.~\eqref{eq:jcbd}, and $r_{+} - r_{-} = -\lambda I_{\mathfrak{B}}$.
	In particular, the following statements hold.
	\begin{enumerate}[label=(\roman*)]
		\item
		    If $\lambda = 0$, then $(A, \circ, \Delta_r)$ is a triangular Jordan D-bialgebra.

		\item
		    If $\lambda \neq 0$, then $(A, \circ, \Delta_r)$ is a factorizable Jordan D-bialgebra.
	\end{enumerate}
\end{theorem}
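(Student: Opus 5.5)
The plan is to decompose $P$ into a skew-adjoint part and a self-adjoint part, and then apply Corollary~\ref{coro:exoqs} together with Theorem~\ref{CJBforQuasiandTri}. Set
\begin{equation*}
	T := P + \frac{\lambda}{2}\id, \qquad S := \frac{\lambda}{2}\id .
\end{equation*}

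First, the compatibility condition \eqref{eq:qrbp} can be rewritten as $\mathcal{B}(T(x),y) + \mathcal{B}(x,T(y)) = 0$, so $T$ is skew-adjoint with respect to $\mathcal{B}$. The map $S$ is a scalar multiple of the identity. It is therefore self-adjoint, and it is balanced associated to $(A;L_\circ)$ because $S(x)\circ y = \frac{\lambda}{2}x\circ y = x\circ S(y)$ by commutativity. By Lemma~\ref{lem:rb2eo}, the fact that $P$ is a Rota-Baxter operator of weight $\lambda$ means that $T$ is an extended relative Rota-Baxter operator with extension $S$ associated to $(A;L_\circ)$.

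Second, Corollary~\ref{coro:exoqs} then says that the 2-tensor form of $TI_\mathcal{B} - SI_\mathcal{B} = PI_\mathcal{B}$ is a solution of the JYBE, and this 2-tensor form is exactly $r$. Its symmetric part is the 2-tensor form of $SI_\mathcal{B}$. This is invariant by Lemmas~\ref{lem:sinvb} and~\ref{lem:eq4q}, as in the proof of Corollary~\ref{coro:exoqs}. Theorem~\ref{CJBforQuasiandTri} now gives that $(A,\circ,\Delta_r)$ is quasi-triangular.

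Third, to compute $r_+ - r_-$, recall that $r_+ - r_-$ equals $2\Theta_+$, where $\Theta$ is the symmetric part of $r$. I would verify directly that $-r_- = r_+^* = (PI_\mathcal{B})^*$. Under the identification of $I_\mathcal{B}$ via $\mathcal{B}$, this dual is $P^\dagger I_\mathcal{B}$, where $P^\dagger$ is the $\mathcal{B}$-adjoint of $P$. Condition \eqref{eq:qrbp} gives $P^\dagger = -\lambda\id - P$. Hence
\begin{equation*}
	r_+ - r_- = PI_\mathcal{B} + (-\lambda\id - P)I_\mathcal{B} = -\lambda I_\mathcal{B}.
\end{equation*}
Equivalently, $2\Theta_+ = 2SI_\mathcal{B}$ up to the sign convention. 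The main care needed is in tracking this sign, using $\langle r_+(a^*),b^*\rangle = -\langle a^*, r_-(b^*)\rangle$ and the symmetry of $\mathcal{B}$.

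Finally, the two special cases follow. If $\lambda = 0$, then $r_+ = r_-$, so the symmetric part $\Theta$ vanishes and $r$ is skew-symmetric; the D-bialgebra is then triangular. If $\lambda \neq 0$, then $-\lambda I_\mathcal{B}$ is a linear isomorphism, so the D-bialgebra is factorizable.
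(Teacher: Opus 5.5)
Your proposal is correct and follows essentially the same route as the paper: you write $PI_\mathcal{B}=(P+\frac{\lambda}{2}\id)I_\mathcal{B}-\frac{\lambda}{2}I_\mathcal{B}$, get skew-adjointness of $P+\frac{\lambda}{2}\id$ from \eqref{eq:qrbp} and the extended relative Rota-Baxter property from Lemma~\ref{lem:rb2eo}, apply Corollary~\ref{coro:exoqs}, and compute $r_+-r_-=r_++(\tau(r))_+=PI_\mathcal{B}+(-\lambda\id-P)I_\mathcal{B}=-\lambda I_\mathcal{B}$. The only slip is a sign: since $r$ is the 2-tensor form of $TI_\mathcal{B}-SI_\mathcal{B}$, its symmetric part is the 2-tensor form of $-SI_\mathcal{B}$ (so $\Theta_+=-\frac{\lambda}{2}I_\mathcal{B}$), and this affects neither invariance nor any conclusion.
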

\begin{proof}
	By Lemma~\ref{lem:rb2eo}, $P + \frac{\lambda}{2}\id$ is an extended relative Rota-Baxter operator with extension $\frac{\lambda}{2}\id$ associated to $(A; L_{\circ})$.
	Furthermore, $P + \frac{\lambda}{2}\id$ is skew-adjoint with respect to $\mathcal{B}$.
	By Corollary~\ref{coro:exoqs}, $r$ is a solution of the JYBE in $(A, \circ)$ with invariant symmetric part, as the 2-tensor form of $(P+\frac{\lambda}{2}\id)I_\mathcal{B} - \frac{\lambda}{2}I_\mathcal{B}$.
	Moreover, $\tau(r)_{+} = -P I_\mathcal{B} - \lambda I_\mathcal{B}$ and $r_{+} - r_{-} = r_{+} + (\tau(r))_{+} = -\lambda I_\mathfrak{B}$.
	Now the particular cases are obvious.
\end{proof}

\begin{remark}\label{rmk:rtr}
	The above proof establishes that $\tau(r)$ is the tensor form of $-P I_\mathcal{B} - \lambda I_\mathcal{B}$.
	By Proposition~\ref{prop:PandtauP}, $(A, \circ, \mathcal{B}, -\lambda \id-P)$ is a quadratic Rota-Baxter Jordan algebra of weight $\lambda$. 
	Applying Theorem~\ref{thm:qrbj2qtjb}, we obtain that $(A, \circ, \mathcal{B}, -\lambda \id-P)$ likewise yields a quasi-triangular Jordan D-bialgebra $(A, \circ, \Delta_{\tau(r)})$.
	In particular, if $\lambda \neq 0$, it yields a factorizable Jordan D-bialgebra $(A, \circ, \Delta_{\tau(r)})$.
\end{remark}

Complementing Theorem~\ref{thm:qrbj2qtjb}, the next result shows that every factorizable Jordan D-bialgebra naturally gives rise to a quadratic Rota-Baxter Jordan algebra of nonzero weight.
This completes the bijective correspondence between factorizable Jordan D-bialgebras and quadratic Rota-Baxter Jordan algebras of nonzero weight.

\begin{theorem}\label{thm:fpb2qrp}
	Let $(A, \circ ,\Delta_r)$ be a factorizable Jordan D-bialgebra.  
	Writing $r$ as $r = \Lambda + \Theta$ with $\Lambda \in \mathrm{Alt}^2(A)$ and $\Theta \in \mathrm{Sym}^2(A)$.
	Define a linear map $P: A \to A$ and a bilinear form $\mathcal{B}$ on $A$ respectively by
	\begin{align*}
		P & := -\frac{\lambda}{2} r_{+}  \Theta_{+}^{-1} , \quad \lambda \neq 0, \\
		\mathcal{B}(x, y) & := -\frac{\lambda}{2}\langle \Theta_{+}^{-1} (x) , y \rangle, \;\; \forall x, y \in A.
	\end{align*}
	Then $( A, \circ, \mathcal{B}, P)$ is a quadratic Rota-Baxter Jordan algebra of weight $\lambda$.
\end{theorem}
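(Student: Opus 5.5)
Since $(A,\circ,\Delta_r)$ is factorizable, $r_{+}-r_{-} = 2\Theta_{+}$ is a linear isomorphism, so $\Theta_{+}^{-1}$ exists and $P$, $\mathcal{B}$ are well defined. The plan is to verify three things in order: $(A,\circ,\mathcal{B})$ is quadratic, $P$ satisfies the compatibility~\eqref{eq:qrbp}, and $P$ is a Rota-Baxter operator of weight $\lambda$. The common device is to write elements as $x=\Theta_{+}(a^*)$, $y=\Theta_{+}(b^*)$, so that $\mathcal{B}(x,y) = -\frac{\lambda}{2}\langle a^*, \Theta_{+}(b^*)\rangle$ and $P(x) = -\frac{\lambda}{2} r_{+}(a^*)$.

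\emph{Quadratic structure.} Symmetry of $\mathcal{B}$ follows because $\Theta$ is symmetric, which gives $\langle a^*,\Theta_{+}(b^*)\rangle = \langle b^*,\Theta_{+}(a^*)\rangle$. Nondegeneracy follows from the invertibility of $\Theta_{+}$. For invariance, Lemma~\ref{lem:sinvb} says $\Theta_{+}$ is a homomorphism of representations, $\Theta_{+}(L_\circ^*(z)a^*) = z\circ\Theta_{+}(a^*)$, and hence $\Theta_{+}^{-1}(z\circ x) = L_\circ^*(z)\Theta_{+}^{-1}(x)$. It follows that
$\mathcal{B}(z\circ x, y) = -\frac{\lambda}{2}\langle L_\circ^*(z)\Theta_+^{-1}(x), y\rangle = -\frac{\lambda}{2}\langle \Theta_+^{-1}(x), z\circ y\rangle = \mathcal{B}(x, z\circ y)$. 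Combined with commutativity and symmetry, this yields $\mathcal{B}(x\circ y,z)=\mathcal{B}(x,y\circ z)$.

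\emph{Compatibility.} Write $r_{+} = \Lambda_{+}+\Theta_{+}$, so that $P = -\frac{\lambda}{2}\Lambda_{+}\Theta_{+}^{-1} - \frac{\lambda}{2}\id$. The term $-\frac{\lambda}{2}\id$ contributes $-\lambda\mathcal{B}(x,y)$ to $\mathcal{B}(P x,y)+\mathcal{B}(x,Py)$, which cancels the $\lambda\mathcal{B}(x,y)$ in~\eqref{eq:qrbp}. It therefore suffices to show that $Q:=\Lambda_{+}\Theta_{+}^{-1}$ is skew-adjoint for $\mathcal{B}$. We have $\mathcal{B}(Qx,y) = -\frac{\lambda}{2}\langle b^*, \Lambda_{+}(a^*)\rangle$, and this is antisymmetric in $(a^*,b^*)$ because $\Lambda$ is skew-symmetric.

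\emph{Rota-Baxter identity.} This is the main step. By Theorem~\ref{thm:lreq}, $T:=\Lambda_{+}$ is an extended relative Rota-Baxter operator with extension $S:=\Theta_{+}$ associated to $(A^*;L_\circ^*)$. Pulling back along the isomorphism $\Theta_{+}$ (in the spirit of Proposition~\ref{prop:ajo2s}), we set $T' := \Lambda_{+}\Theta_{+}^{-1}$ and $S' := \id$. The key relation is $\Theta_{+}^{-1}(T'(x)\circ y) = L_\circ^*(T'(x))b^*$, which follows from the homomorphism property. Substituting into~\eqref{eq:exto} then gives
$T'(x)\circ T'(y) - T'(T'(x)\circ y + x\circ T'(y)) = -x\circ y$,
so $T'$ is an extended relative Rota-Baxter operator with extension $\id$ associated to $(A;L_\circ)$. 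Scaling by $\frac{\lambda}{2}$ shows that $\frac{-\lambda}{2}T'$ is an extended relative Rota-Baxter operator with extension $\frac{\lambda}{2}\id$, since the sign is irrelevant in $-S\circ S$. Finally, Lemma~\ref{lem:rb2eo} applies with $P+\frac{\lambda}{2}\id = -\frac{\lambda}{2}T'$ and shows that $P$ is a Rota-Baxter operator of weight $\lambda$. The step most likely to need care is this transport of the extended operator identity through $\Theta_{+}^{-1}$, together with the sign bookkeeping, although the computation itself is routine.
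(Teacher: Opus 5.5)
Your proof is correct and follows essentially the paper's route: the same check of the quadratic structure via Eq.~\eqref{eq:adi1}, the same compatibility computation (you only split off the $-\frac{\lambda}{2}\id$ part first), and the Rota-Baxter identity obtained from Theorem~\ref{thm:lreq} followed by Lemma~\ref{lem:rb2eo}. The one difference is that you carry out the transport along $\Theta_{+}$ by hand, whereas the paper observes that $r_{+} = P I_{\mathcal{B}}$ with $I_{\mathcal{B}} = -\frac{2}{\lambda}\Theta_{+}$ and invokes Corollary~\ref{coro:exoqs}, which packages exactly that transport (Proposition~\ref{prop:ajo2s}).
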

\begin{proof}
	Since $\Theta_{+} = \frac{1}{2}(r_{+} - r_{-})$ is a linear isomorphism of vector spaces, $\mathcal{B}$ is a nondegenerate bilinear form.
	Note that $\Theta_{+}^* = \Theta_{+}$, we have $(\Theta_{+}^{-1})^* = (\Theta_{+}^{*}) ^{-1} = \Theta_{+}^{-1}$ and therefore $\mathcal{B}$ is symmetric.
	Moreover, for all $x, y, z \in A$, we have
	\begin{align*}
		&\mathcal{B}(x \circ y, z)  = -\frac{\lambda}{2}\langle \Theta_+^{-1} (x \circ y) , z \rangle \overset{\eqref{eq:adi1}}{=} -\frac{\lambda}{2}\langle L_{\circ}^*(y)\Theta_+^{-1} (x) , z \rangle \\
		=\;& -\frac{\lambda}{2}\langle \Theta_+^{-1} (x) , y \circ z \rangle = -\frac{\lambda}{2}\langle x , \Theta_+^{-1}(y \circ z) \rangle = \mathcal{B}(x, y \circ z).
	\end{align*}
	Thus, $(A, \circ, \mathcal{B})$ is a quadratic Jordan algebra.
	For all $a^*, b^* \in A^*$, we have
	\begin{align*}
		& \mathcal{B}(P(\Theta_{+}(a^*)), \Theta_{+}(b^*)) + \mathcal{B}(\Theta_{+}(a^*), P(\Theta_{+}(b^*))) + \lambda \mathcal{B}(\Theta_{+}(a^*), \Theta_{+}(b^*))                              \\
		=\; & -\frac{\lambda}{2}\mathcal{B}(r_{+}(a^*), \Theta_{+}(b^*)) -\frac{\lambda}{2} \mathcal{B}(\Theta_{+}(a^*), r_{+}(b^*)) + \lambda \mathcal{B}(\Theta_{+}(a^*), \Theta_{+}(b^*)) \\
		=\; & \frac{\lambda^2}{4} \langle b^*, r_{+}(a^*)\rangle + \frac{\lambda^2}{4} \langle a^*, r_{+}(b^*)\rangle -\frac{\lambda^2}{2} \langle a^*, \Theta_{+}(b^*)\rangle   \\
		=\; & \frac{\lambda^2}{4}\langle a^* \otimes b^*, r + \tau(r) \rangle - \frac{\lambda^2}{2} \langle a^* \otimes b^*, \Theta \rangle =  0.
	\end{align*}
	That is, $P$ satisfies Eq.~\eqref{eq:qrbp} and $P + \frac{\lambda}{2}\id$ is skew-adjoint with respect to $\mathcal{B}$.
	Furthermore, since $I_\mathcal{B}^{-1} = - \frac{\lambda}{2}  \Theta_{+}^{-1}$, we have $r_{+} = PI_\mathcal{B}$.
	Then, by Corollary~\ref{coro:exoqs}, $P + \frac{\lambda}{2}\id$ is an extended relative Rota-Baxter operator with extension $\frac{\lambda}{2}\id$ associated to $(A; L_{\circ})$.
	Lemma~\ref{lem:rb2eo} then implies that $P$ is a Rota-Baxter Jordan algebra of weight $\lambda$.
	This completes the proof.
\end{proof}

\begin{proposition}
	Let $(A, \circ, \Delta_r)$ be a factorizable Jordan D-bialgebra, which corresponds to a quadratic Rota-Baxter Jordan algebra $(A, \circ, \mathcal{B}, P)$ of weight $\lambda \neq 0$ via Theorems~\ref{thm:fpb2qrp} and \ref{thm:qrbj2qtjb}.
	Then the factorizable Jordan D-bialgebra $(A, \circ, \Delta_{\tau{(r)}})$ corresponds to the quadratic Rota-Baxter Jordan algebra $(A, \circ, \mathcal{B}, - \lambda \id - P)$ of weight $\lambda$.
	In conclusion, we have the following commutative diagram.
	\begin{displaymath}
		\xymatrix{
			(A, \circ, \Delta_r) \ar@{<->}[rr]^{\mathrm{Prop.}~\ref{prop:factorJba}}\ar@{->}[d]^{\mathrm{Thm.}~\ref{thm:fpb2qrp}}& & (A, \circ, \Delta_{\tau(r)}) \ar@{->}[d]^{\mathrm{Thm.}~\ref{thm:fpb2qrp}} \\
			(A, \circ, \mathcal{B}, P) \ar@<1.5ex>[u]^{\mathrm{Thm.}~\ref{thm:qrbj2qtjb}} \ar@{<->}[rr]^{\mathrm{Prop}.~\ref{prop:PandtauP} }& & (A, \circ, \mathcal{B}, -\lambda \id -P) \ar@<1.5ex>[u]^{\mathrm{Thm.}~\ref{thm:qrbj2qtjb}}}
	\end{displaymath}
\end{proposition}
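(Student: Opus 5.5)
We have to check that applying Theorem~\ref{thm:fpb2qrp} to $(A, \circ, \Delta_{\tau(r)})$ yields exactly the bilinear form $\mathcal{B}$ and the operator $-\lambda\id - P$. Then we check that the reverse arrows are consistent, and the commutativity of the diagram follows.

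First, write $\tau(r) = -\Lambda + \Theta$. The symmetric part of $\tau(r)$ is therefore the same $\Theta$, and $(\tau(r))_{+} - (\tau(r))_{-} = 2\Theta_{+} = r_{+} - r_{-}$. So $(A,\circ,\Delta_{\tau(r)})$ is factorizable; this is also the content of Proposition~\ref{prop:factorJba}. The form that Theorem~\ref{thm:fpb2qrp} attaches to $\tau(r)$ is $-\frac{\lambda}{2}\langle \Theta_{+}^{-1}(x), y\rangle$, which is literally the same $\mathcal{B}$ as for $r$. The operator it attaches is
\[
P' = -\frac{\lambda}{2}(\tau(r))_{+}\Theta_{+}^{-1} = \frac{\lambda}{2}r_{-}\Theta_{+}^{-1},
\]
using $(\tau(r))_{+} = -r_{-}$. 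Since $r_{-} = r_{+} - 2\Theta_{+}$, this becomes
\[
P' = \frac{\lambda}{2}r_{+}\Theta_{+}^{-1} - \lambda\id = -P - \lambda\id,
\]
which is what we want. The downward arrows therefore commute with the horizontal ones.

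For the upward arrows, recall from the proof of Theorem~\ref{thm:fpb2qrp} that $I_\mathcal{B}^{-1} = -\frac{\lambda}{2}\Theta_{+}^{-1}$, so $I_\mathcal{B} = -\frac{2}{\lambda}\Theta_{+}$. Then $PI_\mathcal{B} = r_{+}$, so Theorem~\ref{thm:qrbj2qtjb} applied to $(A,\circ,\mathcal{B},P)$ returns $r$. Likewise $(-\lambda\id - P)I_\mathcal{B} = 2\Theta_{+} - r_{+} = -r_{-} = (\tau(r))_{+}$, so it returns $\tau(r)$; this agrees with Remark~\ref{rmk:rtr}.

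Conversely, start from $(A,\circ,\mathcal{B},P)$ and let $r$ be the 2-tensor form of $PI_\mathcal{B}$. Theorem~\ref{thm:qrbj2qtjb} gives $r_{+} - r_{-} = -\lambda I_\mathcal{B}$, so $\Theta_{+} = -\frac{\lambda}{2}I_\mathcal{B}$. Applying Theorem~\ref{thm:fpb2qrp} then recovers $-\frac{\lambda}{2}r_{+}\Theta_{+}^{-1} = PI_\mathcal{B}I_\mathcal{B}^{-1} = P$ and recovers $\mathcal{B}$. So the two constructions are mutually inverse and the whole diagram commutes.

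The main point needing care is sign bookkeeping: the relations $(\tau(r))_{+} = -r_{-}$, $\Theta_{+} = \frac{1}{2}(r_{+}-r_{-})$, and the sign of $I_\mathcal{B}$ in terms of $\Theta_{+}$. We will verify each of these against the conventions fixed before Lemma~\ref{lem:sinvb}.
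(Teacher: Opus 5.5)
Your proposal is correct and follows the paper's proof: the key computation $P' = -\frac{\lambda}{2}(\tau(r))_{+}\Theta_{+}^{-1} = \frac{\lambda}{2}r_{-}\Theta_{+}^{-1} = -P-\lambda\id$ is exactly the paper's, and your check $(-\lambda\id - P)I_\mathcal{B} = -r_{-} = (\tau(r))_{+}$ for the upward arrow is precisely what the paper cites from Remark~\ref{rmk:rtr}. Your additional verification that Theorems~\ref{thm:fpb2qrp} and \ref{thm:qrbj2qtjb} are mutually inverse (via $\Theta_{+} = -\frac{\lambda}{2}I_\mathcal{B}$) is a harmless extra that the paper takes for granted in the hypothesis.
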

\begin{proof}
	Let $\Theta: = \frac{1}{2}(r + \tau(r))$ be the symmetric part of $r$. 
	By Theorem~\ref{thm:fpb2qrp}, the factorizable Jordan bialgebra $(A, \circ, \Delta_{\tau(r)})$ induces a quadratic Rota-Baxter Jordan algebra $(A, \circ, \mathcal{B}^{\prime}, P^{\prime})$ of weight $\lambda \ne 0$, where
	\begin{align*}
		\mathcal{B}^{\prime}(x, y) & =-\frac{\lambda}{2}\langle \Theta_{+}^{-1} (x) , y \rangle= \mathcal{B}(x, y), \quad \forall x, y \in A, \\
		P^{\prime} & = -\frac{\lambda}{2} (\tau(r))_{+}  \Theta_{+}^{-1} = \frac{\lambda}{2} r_{-}  \Theta_{+}^{-1} = \frac{\lambda}{2} (r_{+}  - 2 \Theta_{+})  \Theta_{+}^{-1} = -P -\lambda \mathrm{id}.
	\end{align*}
	That is, $(A,\circ, \Delta_{\tau(r)})$ induces a quadratic Rota-Baxter Jordan algebra $(A, \circ, \mathcal{B}, -P -\lambda \mathrm{id})$ of weight $\lambda$ by Theorem~\ref{thm:fpb2qrp}.
	Conversely, Remark~\ref{rmk:rtr} implies that the quadratic Rota-Baxter Jordan algebra $(A, \circ, \mathcal{B}, -\lambda \mathrm{id}-P)$ induces the factorizable Jordan bialgebra $(A, \circ, \Delta_{\tau(r)})$ via Theorem~\ref{thm:qrbj2qtjb}.
\end{proof}

\begin{proposition}\label{RBJ:XY}
	Let $(A, \circ)$ be a Jordan algebra, $P$ be a Rota-Baxter operator of weight $\lambda \ne 0$ on $(A, \circ)$ and $(A \ltimes_{L_{\circ}^{*}} A^{*}, \bullet)$ be the semi-direct product of $(A, \circ)$ and $(A^*; L_{\circ}^*)$. 
	Let $\{e_1, e_2, \cdots, e_n\}$ be a basis of $A$ and $\{e_1^*, e_2^*, \cdots, e_n^*\}$ be the dual basis of $A^*$. Define $\tilde{P} := - \lambda \id - P$ and set
	\begin{equation*}
		r := \sum_{i}^n \tilde{P} (e_i) \otimes e_i^{*} + e_i^*\otimes P(e_i) \in (A \oplus A^*) \otimes (A \oplus A^*).
	\end{equation*}
	Then $(A \ltimes_{L_{\circ}^{*}} A^{*}, \bullet, \Delta_r)$ is a factorizable Jordan D-bialgebra, where $\Delta_{r}$ is defined by Eq.~\eqref{eq:jcbd}.
\end{proposition}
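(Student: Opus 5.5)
The plan is to obtain Proposition~\ref{RBJ:XY} from Proposition~\ref{exrbfna} combined with Theorem~\ref{thm:qrbj2qtjb}. The only real work is to recognize the given $r$ as the 2-tensor form of $\mathcal{P} I_{\mathcal{B}_d}$, where $\mathcal{P} := P + \tilde{P}^*$.

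First, by Proposition~\ref{exrbfna}, $(A \ltimes_{L_{\circ}^{*}} A^{*}, \bullet, \mathcal{B}_d, \mathcal{P})$ is a quadratic Rota-Baxter Jordan algebra of weight $\lambda$. Here $\mathcal{P}(x + a^*) = P(x) + \tilde{P}^*(a^*)$.

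Second, I will compute $I_{\mathcal{B}_d}$ on $\mathfrak{D}^* = A^* \oplus A$, identifying $(A^*)^*$ with $A$. From $\langle I^{-1}_{\mathcal{B}_d}(x + a^*), y + b^*\rangle = \langle a^*, y\rangle + \langle b^*, x\rangle$ we get $I^{-1}_{\mathcal{B}_d}(x + a^*) = a^* + x$, read as the element of $\mathfrak{D}^* = A^* \oplus A$. In other words, $I_{\mathcal{B}_d}$ is the swap map, sending the $A^*$-component of $\mathfrak{D}^*$ to $A^* \subset \mathfrak{D}$ and the $A$-component to $A \subset \mathfrak{D}$. Hence
\[
\mathcal{P} I_{\mathcal{B}_d}(a^* + x) = \tilde{P}^*(a^*) + P(x).
\]

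Third, I will compute $r_{+}$ for the given $r$ and compare. Pairing $r = \sum_i \tilde{P}(e_i) \otimes e_i^* + e_i^* \otimes P(e_i)$ with $(a^* + x) \otimes (b^* + y)$ gives
\[
\langle r, (a^*+x)\otimes(b^*+y)\rangle = \langle a^*, \tilde{P}(y)\rangle + \langle b^*, P(x)\rangle = \langle \tilde{P}^*(a^*) + P(x), b^* + y\rangle .
\]
So $r_{+}(a^* + x) = P(x) + \tilde{P}^*(a^*)$, which equals $\mathcal{P} I_{\mathcal{B}_d}(a^* + x)$. Therefore $r$ is exactly the 2-tensor form of $\mathcal{P} I_{\mathcal{B}_d}$.

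Finally, Theorem~\ref{thm:qrbj2qtjb} with $\lambda \neq 0$ gives that $(A \ltimes_{L_{\circ}^{*}} A^{*}, \bullet, \Delta_r)$ is a factorizable Jordan D-bialgebra, with $r_+ - r_- = -\lambda I_{\mathcal{B}_d}$ invertible.

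The main obstacle is the bookkeeping in the second and third steps: the identification of $\mathfrak{D}^*$ with $A^* \oplus A$, and which tensor slot is paired with which functional. Choosing the wrong convention would produce $\tau(r)$ in place of $r$. Even that mistake would be harmless, since by Proposition~\ref{prop:factorJba} (equivalently Remark~\ref{rmk:rtr}) $\Delta_{\tau(r)}$ is factorizable as well. I also implicitly rely on the claim in Proposition~\ref{exrbfna} that $\mathcal{P}$ is a Rota-Baxter operator of weight $\lambda$, which the paper states there without a detailed check.
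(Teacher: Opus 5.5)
Your proposal is correct and follows the paper's proof essentially verbatim. You invoke Proposition~\ref{exrbfna} to get the quadratic Rota-Baxter Jordan algebra $(A \ltimes_{L_{\circ}^{*}} A^{*}, \bullet, \mathcal{B}_d, P+\tilde{P}^{*})$, verify $r_{+}(a^*+x) = P(x) + \tilde{P}^*(a^*) = (P+\tilde{P}^{*}) I_{\mathcal{B}_d}(a^*+x)$, and conclude by Theorem~\ref{thm:qrbj2qtjb} with $\lambda \neq 0$; your pairing convention matches the paper's, so the $\tau(r)$ fallback is never needed.
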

\begin{proof}
	By Proposition~\ref{exrbfna}, $(A \ltimes_{L_{\circ}^{*}} A^{*}, \bullet, \mathcal{B}_d, P + \tilde{P}^{*})$ is a quadratic Rota-Baxter Jordan algebra of weight $\lambda$ with $\mathcal{B}_d$ defined by Eq.~\eqref{eq:abm}. 
	For all $x \in A$ and $a^* \in A^*$, we have
	\begin{align*}
		r_{+}(a^* + x) =  \sum_{i}\langle x, e_{i}^*\rangle P(e_i) + \sum_{i}\langle a^*, \tilde{P}(e_i)\rangle e_{i}^* = P(x) + \tilde{P}^{*}(a^*) =  (P+\tilde{P}^{*}) I_{\mathcal{B}_d}(a^* + x).
	\end{align*}
	That is, $r$ is the 2-tensor form of $(P+\tilde{P}^{*}) I_{\mathcal{B}_d}$.
	By Theorem \ref{thm:qrbj2qtjb}, $(A \ltimes_{L_{\circ}^{*}} A^{*}, \bullet, \Delta_r)$ is a factorizable Jordan D-bialgebra.
\end{proof}

\begin{corollary}\label{RBPoissonof3}
	Let $(A, \circ)$ be a Jordan algebra and $(A \ltimes_{L_{\circ}^{*}} A^{*}, \bullet)$ be the semi-direct product of $(A, \circ)$ and $(A^*; L_{\circ}^*)$.
	Let $\{e_1, e_2, \cdots, e_n\}$ be a basis of $A$ and $\{e_1^*, e_2^*, \cdots, e_n^*\}$ be the dual basis of $A^*$. 
	Set
	\begin{align*}
		r := \sum_{i}^n e_i^*\otimes e_i.
	\end{align*}
	Then $(A \ltimes_{L_{\circ}^{*}} A^{*}, \bullet, \Delta_r)$ is a factorizable Jordan D-bialgebra, where $\Delta_{r}$ is defined by Eq.~\eqref{eq:jcbd}.
\end{corollary}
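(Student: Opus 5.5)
This corollary is the special case $P = 0$ of Proposition~\ref{RBJ:XY}, specialised to a suitable nonzero weight. The zero map is a Rota-Baxter operator of every weight $\lambda$, since both sides of
\begin{equation*}
	P(x) \circ P(y) = P\big(P(x) \circ y + x \circ P(y) + \lambda x \circ y\big)
\end{equation*}
vanish identically. We therefore take $P = 0$ with weight $\lambda = -1$, so that $\tilde{P} = -\lambda\,\id - P = \id$.

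Substituting into the tensor of Proposition~\ref{RBJ:XY} gives
\begin{equation*}
	r = \sum_{i} \tilde{P}(e_i) \otimes e_i^* + e_i^* \otimes P(e_i) = \sum_{i} e_i \otimes e_i^*.
\end{equation*}
This is $\tau$ of the element $\sum_i e_i^* \otimes e_i$ in the statement. Proposition~\ref{RBJ:XY} then makes $(A \ltimes_{L_{\circ}^{*}} A^{*}, \bullet, \Delta_{\sum_i e_i \otimes e_i^*})$ a factorizable Jordan D-bialgebra. Proposition~\ref{prop:factorJba} transfers factorizability from $r$ to $\tau(r)$, which yields the claim for $\sum_i e_i^* \otimes e_i$.

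Alternatively, one could avoid the flip by choosing $\lambda = 1$ and $P = -\id$, which is a Rota-Baxter operator of weight $1$, so that $\tilde{P} = 0$. Then the formula gives $r = -\sum_i e_i^* \otimes e_i$. But $\Delta_r$ is linear in $r$, so this only yields $\Delta_{-r}$, and a sign fix would still be needed. The $\tau$ route is therefore cleaner.

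The only point needing care is matching the sign and order conventions. For $\tau$ of the tensor $\sum_i e_i \otimes e_i^*$ one gets $\sum_i e_i^* \otimes e_i$, exactly the stated element. I would also double-check $r_+ - r_-$ directly: it should equal $\pm I_{\mathcal{B}_d}$, which is a linear isomorphism, confirming factorizability independently of the weight bookkeeping.
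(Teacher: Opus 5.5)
Your proof is correct and follows the paper's strategy of specializing Proposition~\ref{RBJ:XY}. The paper takes $P=\id$ of weight $\lambda=-1$, the one choice your comparison of options left out. This gives $\tilde P=0$ and $r=\sum_i e_i^*\otimes e_i$ exactly, so neither the flip nor the sign fix is needed. Your choice $P=0$ is just the complement $\tilde P$ of the paper's $P=\id$, so your extra step through $\tau$ and Proposition~\ref{prop:factorJba} is the correspondence already recorded in Proposition~\ref{prop:PandtauP} and Remark~\ref{rmk:rtr}.
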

\begin{proof}
	Clearly, $\id: A \to A$ is a Rota-Baxter operator of weight $-1$ on the Jordan algebra $(A, \circ)$. 
	The conclusion then follows from Proposition~\ref{RBJ:XY}.
\end{proof}

This section ends with an explicit example that illustrates the construction of factorizable Jordan D-bialgebras from finite-dimensional Jordan algebras provided by Corollary~\ref{RBPoissonof3}.

\begin{example}\label{FJordanDB1}
	Continuing with the notations in Example \ref{ex:Jordan3}, 	
	there is a factorizable Jordan D-bialgebra $(A \ltimes_{L_{\circ}^{*}} A^{*}, \bullet, \Delta_r)$ with $\Delta_{r}$ explicitly given by
	\begin{equation*}
		\Delta_r(e_1)  = 0, \quad
		\Delta_r(e_2) = \frac{1}{2} e_1^* \otimes e_2, \quad
		\Delta_r(e_1^*) = e_1^* \otimes e_1^*, \quad 
		\Delta_r(e_2^*) = \frac{1}{2} e_1^* \otimes e_2^*.
	\end{equation*}
\end{example}

\bigskip

\noindent \textbf{Data Availability Statement} No datasets were generated or analyzed during the current study.

%%%%%%%%%%%%%%%%%%%%%%%%%%%%%%%%%%%%%%%%%%%%%%%%%%%%%%%%%%%%%%%%%%%%%%%%%%%%%%%%

%\bibliographystyle{camsplain} % latex makebst
%\bibliography{ref.bib}

\end{document}